\documentclass[reqno,12pt]{amsart}
%%%%%%%%%%%%%%%%%%%%%%%%%%%%%%%%%%%%%%%%%%%%%%%%%%%%%%%%%%%%%%%%%%%%%%%%%%%%%%%%%%%%%%%%%%%%%%%%%%%%%%%%%%%%%%%%%%%%%%%%%%%%%%%%%%%%%%%%%%%%%%%%%%%%%%%%%%%%%%%%%%%%%%%%%%%%%%%%%%%%%%%%%%%%%%%%%%%%%%%%%%%%%%%%%%%%%%%%%%%%%%%%%%%%%%%%%%%%%%%%%%%%%%%%%%%%
\usepackage{amssymb,amsmath,amsthm,}
    \usepackage{a4wide}
\usepackage{amscd}
\usepackage{amsfonts}
\usepackage{amssymb}
\usepackage{latexsym}
\usepackage{color}
\usepackage{esint}
\usepackage{graphicx}
\usepackage{float}
\graphicspath{{Figures/}}

\usepackage{graphicx}
\usepackage[breaklinks=true,bookmarks=false]{hyperref}
\usepackage{cleveref}

\usepackage[makeroom]{cancel}

\setcounter{MaxMatrixCols}{10}

 %  \oddsidemargin  0.5 pt
  % \evensidemargin 0.5 pt
  %  \textwidth      6.7 in
 %  \headheight     -.2in
  % \topmargin      -0.1in
 % \textheight = 9.3 in
%TCIDATA{OutputFilter=LATEX.DLL}
%TCIDATA{Version=5.00.0.2552}
%TCIDATA{<META NAME="SaveForMode" CONTENT="1">}
%TCIDATA{LastRevised=Tuesday, February 03, 2015 20:01:52}
%TCIDATA{<META NAME="GraphicsSave" CONTENT="32">}

\newtheorem{theorem}{Theorem}

\newtheorem{corollary}[theorem]{Corollary}
\newtheorem{definition}{Definition}
\newtheorem{lemma}{Lemma}
\newtheorem{proposition}[theorem]{Proposition}
\newtheorem{remark}{Remark}

\let\e=\varepsilon

\let\p=\partial

\let\O=\Omega

\let\o=\omega

\numberwithin{equation}{section}

\let\hide\iffalse

\newcommand{\R}{\mathbb{R}}

\newcommand{\be}{\begin{equation}}
\newcommand{\bm}{\begin{multline}}
\newcommand{\ee}{\end{equation}}
\newcommand{\dd}{\mathrm{d}}

\newcommand{\xb}{x_{\mathbf{b}}}
\newcommand{\tb}{t_{\mathbf{b}}}

\newcommand{\xf}{x_{\mathbf{f}}}
\newcommand{\tf}{t_{\mathbf{f}}}

\newcommand{\bxp}{\mathbf{x}_{p^1}^1}
\newcommand{\xpj}{\mathbf{x}_{p^1,j}^1}

\newcommand{\Bes}{\begin{eqnarray*}}
\newcommand{\Ees}{\end{eqnarray*}}
\newcommand{\Be}{\begin{equation} }
\newcommand{\Ee}{\end{equation}}

\pagestyle{plain}

% \numberwithin{equation}{subsection}
%\numberwithin{theorem}{section}

\def\p{\partial}

\def\O{\Omega}
\def\R{\mathbb{R}}

\def\B{\begin{equation}}
\def\E{\end{equation}}
\def\BN{\begin{eqnarray*}}
\def\EN{\end{eqnarray*}}

\begin{document}
\title{Gradient Decay in the Boltzmann theory of Non-isothermal boundary}

 \author{Hongxu Chen}
\address{Department of Mathematics, The Chinese University of Hong Kong, Shatin, N.T., Hong Kong, email:  hongxuchen.math@gmail.com}
 \author{Chanwoo Kim}
 \address{Department of Mathematics, University of Wisconsin-Madison, Madison, WI, 53706, USA, email: chanwookim.math@gmail.com}
 
\date{\today}

\maketitle

\begin{abstract}
We consider the Boltzmann equation in convex domain with non-isothermal boundary of diffuse reflection. For both unsteady/steady problems, we construct solutions belong to $W^{1,p}_x$ for any $p<3$. We prove that the unsteady solution converges to the steady solution in the same Sobolev space exponentially fast as $t \rightarrow \infty$.

\end{abstract}
%\tableofcontents

\section{Introduction}
One of the central questions in kinetic theory has been finding effective control of derivatives of solutions to the Boltzmann equation
\Be
 \partial_t F  + v\cdot \nabla_x F   = Q(F,F)  \ \ \text{in} \ \ \R_+ \times \O \times \R^3 \label{F}
 \Ee
in the presence of a physical boundary $\p\O$. Here we consider the hard sphere collision operator, where $Q(F,F)$ takes the form:
\begin{equation}\label{collision operator}
\begin{split}
   Q(F_1,F_2) & := Q_{\text{gain}}(F_1,F_2)-Q_{\text{loss}}(F_1,F_2) \\
     & =\int_{\mathbb{R}^3}\int_{\mathbb{S}^2}|(v-u) \cdot \o|%^{\mathcal{K}}q_0(\theta)
     \big[F_1(u')F_2(v')-F_1(u)F_2(v) \big]\dd \omega \dd u,
\end{split}
\end{equation}
with $u'=u+[(v-u)\cdot \omega]\omega$, $v'=v-[(v-u)\cdot \omega]\omega$ with $\omega \in \mathbb{S}^2$.

A particular motivation for this question could be justifying one of the major a priori assumptions of the celebrated theory of Devillettes-Villani \cite{DV}, namely that a \textit{uniform in time and space smoothness} of Boltzmann solutions. While there has been much progress on the regularity estimate, such a desired uniform estimate is far less satisfactory. In this paper, we focus on the diffuse reflection boundary condition 
\Be
 F(\cdot, x,v)|_{n(x)\cdot v<0}    = M_W(x,v)\int_{n(x)\cdot u>0} F(\cdot,x,u) \{n(x)\cdot u\} \dd u ,\ \ x\in \p \Omega, \label{bc_F}
\Ee
where the outward normal at the boundary $\p\O$ is denoted by $n(x)$. This condition implies an instantaneous equilibration of bounce-back particles to a thermodynamic equilibrium of the wall temperature $T_W(x)$ at the boundary point $x\in \partial\Omega$:
\begin{equation}\label{Wall Maxwellian}
M_W(x,v) 
= M_{1, 0, T_W (x)}(v)
:=\frac{1}{2\pi [T_W(x)]^2}e^{-\frac{|v|^2}{2T_W(x)}}.
\end{equation} 
When the wall temperature is non-constant (non-isothermal boundary) any steady solutions, if exist, to the Boltzmann equation with the diffuse reflection boundary condition \eqref{F_steady}-\eqref{bc_F_steady} is not a local Maxwellian and hence not a (local) minimizer of entropy:
\begin{align}
     v\cdot \nabla_x F_s &= Q(F_s,F_s), \label{F_steady}\\
     F_s(x,v)|_{n(x)\cdot v<0} &= M_W(x,v)
    \int_{n(x) \cdot u>0} F_s(x,u)\{n(x)\cdot u\} \dd u, \ \ x\in \p \O \label{bc_F_steady}.
\end{align}
Esposito-Guo-Kim-Marra established the construction of steady solutions and their dynamical stability in $L^\infty$ when the data are small in \cite{EGKM}. On the other hand, the diffuse reflection boundary condition solely stabilizes the dynamics even without the intermolecular collision \cite{JK, JK2}.

%\cite{CK,Ikun,Ikun2} studied the regularity of the steady solutions.

So far any available results of regularity to the Boltzmann equation with the diffuse reflection boundary condition stop before a $H^1$ threshold, and hence the following question is open:
\Be
\nabla_x F(t,x,v) \in L^2 (\O \times \R^3)  .
\Ee
In \cite{GKTT}, the second author and collaborators prove that dynamic solutions belong to $W^{1,p}_x$ for $p<2$, while such control grows exponentially in time. In \cite{GKTT, CK}, they use a kinetic distance function and prove a \textit{weighted} estimate beyond $H^1_x$. All these studies left the $H^1$ threshold problem unsolved (also see \cite{Ikun}).

In this paper, we pass over this $H^1$ threshold for the first time, which would\textbf{ mark a resolution of a major open problem in a boundary regularity theory of kinetic theorem}. Moreover we prove that both steady/unsteady solutions belong to $W^{1,p}_x$ for $p<3$. Furthermore, we prove that such $W^{1,p}_x$-control of unsteady solutions decays exponentially in time. Therefore the result would make\textbf{ a surprising breakthrough beyond a standing open problem!}

 \begin{theorem}[Informal statement of the Main Theorem]\label{thm:informal_W1p}
Assume the domain is strictly convex (see \eqref{convex}) and {\color{black}the boundary is $C^3$}. Suppose $\sup_{x \in \p\O}|T_W(x)-T_0|\ll  1$ for some constant $T_0>0$ and $T_W(x)  \in C^1(\p\O)$. Then any steady solution $F_s (x,v)$ of a finite total mass satisfies 
\Be
 \frac{1}{\sqrt{M_{1,0,T_0} (v)}} |\nabla_x F_s(x,v) |  \in L^p (\O \times \R^3) \ \ \text{for all } \ p<3 .
\Ee
Here, a global Maxwellian of constant temperature $T_0$ is denoted by $M_{1,0,T_0} = \frac{1}{2\pi [T_0]^2}e^{-\frac{|v|^2}{2T_0}}$.

Suppose an initial datum 
$$F_0 (x,v) = F_s (x,v) + \sqrt{M_{1, 0, T_0} (v) } f_0(x,v)$$ has the same total mass as the steady solution $F_s (x,v)$, %and $\nabla_x f_0(x,v) \in L^\infty$ 
and $\nabla_x f_0(x,v) \rightarrow 0$ pointwisely, fast enough as $|v| \rightarrow \infty$. Finally we assume a smallness condition $\| e^{\theta|v|^2} f_0  \|_\infty\ll 1 $ for some $\theta>0$. Then for any $p<3$,
\Be
 \nabla_x F(t,x,v)\rightarrow \nabla_x F_s(x,v)    \ \text{in} \ L^p (\O \times \R^3) \ \ \text{exponentially fast as} \ t\rightarrow \infty. 
\Ee

%satisfy $\iint_{\O \times \R^3} F_0 (x,v) \dd x \dd v = \iint_{\O \times \R^3} F_s (x,v) \dd x \dd v$
 
%Assume all conditions in Theorem \ref{thm:steady_C1} are satisfied, then the unique steady solution of~\eqref{linearization_steady} $f_s$ belongs to $W^{1,p}(\Omega\times \mathbb{R}^3)$ for $p<3$:
%\begin{equation}\label{f_s_W1p}\Vert \nabla_x f_s\Vert_p \lesssim \Vert w_{\tilde{\theta}}(v)\alpha(x,v)\nabla_x f_s(x,v)\Vert_\infty < \infty.\end{equation}

%Assume all conditions in Theorem \ref{thm:dynamic_regularity} are satisfied so that we have the decay~\eqref{weight_C1}, then $f_d(t)$ belongs to $W^{1,p}(\Omega\times \mathbb{R}^3)$ for $p<3$ and
%\begin{equation}\label{f_W1p}\begin{split}  &e^{\lambda t}\Vert \nabla_x f_d(t)\Vert_p  \lesssim e^{\lambda t}\Vert w_{\tilde{\theta}}(v)\alpha(x,v)\nabla_x f_d(t)\Vert_\infty  <\infty.
%\end{split}
%\end{equation}

\end{theorem}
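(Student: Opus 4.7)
The plan is to write $F = F_s + \sqrt{M}\, f$ with $M := M_{1,0,T_0}$, and to reduce the theorem to (i) constructing the steady profile so that $(F_s-M)/\sqrt{M}$ has a weighted spatial gradient in $L^p$ for $p<3$, and (ii) proving exponential decay of $\|\nabla_x f(t)\|_{L^p}$. Linearizing \eqref{F} around $F_s$ and absorbing $F_s-M$ as a small perturbation produces, schematically,
\be
\p_t f + v\cdot \nabla_x f + \nu(v) f = K f + \G(f,f) + R[f,F_s-M],
\ee
together with the diffuse boundary condition \eqref{bc_F} inherited by $f$. The stationary problem for $F_s$ is the $t$-independent analogue and can be treated in parallel via the same linear machinery.

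\textbf{$L^\infty$ decay and trajectory setup.} Using \cite{EGKM} together with the smallness of $\|e^{\theta|v|^2}f_0\|_\infty$ and of $\|T_W-T_0\|_\infty$, I first secure an exponential $L^\infty$ bound $\|e^{\theta|v|^2}f(t)\|_\infty \lesssim e^{-\lambda t}$ for some $\lambda>0$. This is obtained by representing $f$ along backward characteristics and unfolding the diffuse reflection through a stochastic-cycle expansion; strict convexity of $\O$ ensures that only an exponentially small measure of trajectories comes near grazing. This pointwise decay drives every source term that appears after one spatial differentiation.

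\textbf{Gradient estimate via a kinetic distance weight.} Following \cite{GKTT, CK}, I introduce a kinetic distance $\a(x,v)$ with, locally near $\p\O$,
\be
\a(x,v)^2 \sim |v\cdot n(x_b)|^2 + |v|^2 |\xi(x)|,
\ee
where $\xi\le 0$ is a defining function of $\O$. The function $\a$ is invariant to leading order under the free flow and vanishes precisely on grazing trajectories. Differentiating the linearized equation in $x$ and the diffuse condition tangentially along $\p\O$, then integrating along characteristics, the expected pointwise estimate is
\be
\a(x,v)\,|\nabla_x f(t,x,v)| \lesssim e^{-\lambda t}\bigl(\|\a\, e^{\theta|v|^2}\nabla_x f_0\|_\infty + 1\bigr),
\ee
the $e^{-\lambda t}$ being inherited from the $L^\infty$ decay driving the source. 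In local boundary coordinates with $u := v\cdot n$ and $s := |\xi(x)|$, for $p>2$,
\be
\int_0^{s_\ast} \a^{-p}\,\dd s \;\lesssim\; \frac{|u|^{-(p-2)}}{|v|^2},
\ee
so integrability in $u\in\R$ requires $p-2<1$, i.e., $p<3$, while the Gaussian in $v$ renders the $v$-integral finite. This identifies $p<3$ as the sharp threshold at which $\a^{-1}M^{1/2}\in L^p(\O\times\R^3)$ and delivers the advertised exponential $L^p$ decay.

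\textbf{Main obstacle.} The decisive difficulty lies at the diffuse boundary after differentiation. Differentiating \eqref{bc_F} produces, on the incoming side, an integral of the outgoing $\nabla_x f$ against the kernel $\{n\cdot u\}M_W(x,u)\,\dd u$, plus a term proportional to a tangential derivative of $M_W$. This operator is not a contraction, and its iteration along stochastic cycles is precisely what blocked the previous theory at $p<2$: one loses powers of $\a$ at every bounce. To pass the $H^1$ threshold and reach $p<3$, I would need to propagate the $\a$-weight accurately through each reflection, harnessing the gain $|v\cdot n(x_b)|$ per collision furnished by strict convexity and the $C^3$ regularity of $\p\O$, and pair it with a Vidav-type compactness argument for the diffuse operator (using $\|T_W-T_0\|_\infty\ll 1$) so that the cycle iteration remains contractive in a suitably weighted $L^p$. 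The sharp counting of the grazing set in phase space, which forces the exponent $p<3$, is where I expect the main work to reside.
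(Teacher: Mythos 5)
Your high-level framework coincides with the paper's: decompose $F = F_s + \sqrt{\mu}\,f$, secure an exponentially decaying $L^\infty$ bound from the machinery of \cite{EGKM}, upgrade that to a weighted $C^1$ estimate $\|w_{\tilde\theta}(v)\,\alpha(x,v)\,\nabla_x f(t)\|_\infty \lesssim e^{-\lambda t}$ using the kinetic distance $\alpha$, and then integrate $\alpha^{-p}$ over phase space to land in $W^{1,p}$. Your computation that $\int_0^{s_*}\alpha^{-p}\,\dd s \lesssim |u|^{-(p-2)}/|v|^2$ with $u=v\cdot n$, forcing $p<3$, is essentially Lemma~\ref{lemma:W1p} in disguise: the paper instead changes variables to $(x,v,s)\in\gamma_+\times(0,\tb)$ via Lemma~\ref{lemma:cov}, uses $\tb\lesssim |n(\xb)\cdot v|/|v|^2$ from Lemma~\ref{lemma:convex_property}, and then integrates the resulting $|n(x)\cdot v|^{-(p-2)}$ in spherical coordinates; the $p<3$ threshold appears as the convergence condition $\int_0^\pi|\cos\theta|^{-(p-2)}\,\dd\theta<\infty$. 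These are the same calculation, so no concern there.

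The genuine gap is the one you flag yourself: you do not actually close the weighted $C^1$ estimate, and the mechanism you propose for doing so is not the one that works. A Vidav-type compactness/contraction argument in weighted $L^p$ along stochastic cycles would not recover the $\alpha$-weight: each differentiation of the diffuse kernel produces a factor $\nabla_x\tb \sim 1/|n(\xb)\cdot v|$, and there is no smallness in the diffuse reflection operator itself (only in $T_W-T_0$) that would make the cycle iteration contractive at the level of the gradient. What the paper does instead is purely analytic and non-iterative: (i) a nonlocal-to-local estimate (Lemma~\ref{lemma:nonlocal_to_local}), which shows $\int_{t-\tb}^{t}\!\int_{\R^3}\frac{e^{-C\langle v\rangle(t-s)}e^{-\varrho|v-u|^2}}{|v-u|\,\alpha(x-(t-s)v,u)}\,\dd u\,\dd s \lesssim 1/\alpha(x,v)$, and is what allows the $K$-term in the characteristic expansion to be absorbed back into the $\alpha$-weighted norm; (ii) at the boundary, after at most two bounces, a change of variables $v^1 \mapsto (\mathbf{x}^2_{p^2,1},\mathbf{x}^2_{p^2,2},\tb^1)$ (Lemma~\ref{Lemma: change of variable}) combined with integration by parts in boundary coordinates and in velocity removes the derivative from $f$ entirely, trading it for integrable factors of $|x^1-x^2|^k/|\tb^1|^{k'}$ controlled by the geometric bounds in Lemma~\ref{Lemma: nv<v2}. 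In addition, because $\nabla_x\tb$ multiplies $f$ evaluated at $(t-\tb,\xb,v)$, differentiating the characteristic formula in $x$ produces a genuinely new term proportional to $\partial_t f(t-\tb,\xb,v)$ (the line \eqref{nabla_x_3}); controlling it requires a separate decay estimate for the time derivative (Corollary~\ref{corollary:time_derivative}), an ingredient your outline omits. Without (i), (ii), and the $\partial_t f$ bound, the claimed pointwise estimate $\alpha|\nabla_x f(t)|\lesssim e^{-\lambda t}(\cdots)$ is asserted rather than proved.
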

%\begin{remark}
%It is worth mentioning that the $W^{1,p}$ estimate we establish does not involve any weight. As shown in \eqref{f_W1p}, our result proves that the $W^{1,p}$ norm of the dynamical problem converges to the $W^{1,p}$ norm of the steady problem with exponential rate.
%\end{remark}

%We consider the dynamical Boltzmann equation with the non-isothermal diffuse boundary condition:
%\begin{align} \partial_t F + v\cdot \nabla_x F & = Q(F,F), \ F_d(0,x,v) = F_{d,0}(x,v) \label{F}\\ F_d(t,x,v)|_{n(x)\cdot v<0}  & = M_W(x,v)\int_{n(x)\cdotu>0} F_d(t,x,u) \{n(x)\cdot u\} \dd u ,\ \ x\in \p \Omega. \label{bc_F}\end{align}

Our result, apart from its proof, has two implications. First, it shows that the generic solutions of the steady/unsteady Boltzmann equations are regular globally as $W^{1,3-}$. Second, it shows that the gradient of solutions are asymptotically stable, which demonstrates the versatility of the method of \cite{EGKM} and \cite{GKTT}.

\textbf{Notations. }Throughout this paper we will use the following notations 
\begin{align}
 f\lesssim  g \Leftrightarrow \text{ there exists } 0<C<\infty \text{ such that } 0 \leq f\leq Cg;  \label{lesssim} \\
 f\thicksim g   \Leftrightarrow \text{ there exists } 0<C<\infty \text{ such that } 0 \leq \frac{f}{C}\leq g\leq Cf;  \label{thicksim}  \\
 f \ll g\Leftrightarrow \text{ there exists a small constant } c>0 \text{ such that } 0 \leq  f \leq  cg;  
 \label{ll}\\
  \Vert f\Vert_\infty = \Vert f\Vert_{L^\infty(\Omega\times \mathbb{R}^3)}, \  |f|_\infty = \Vert f\Vert_{L^\infty(\p\Omega\times \mathbb{R}^3)}, \ \Vert f\Vert_p = \Vert f\Vert_{L^p(\Omega\times \mathbb{R}^3)}.\label{norm} 
\end{align}
Moreover, $f = o(1) \Leftrightarrow  |f|\ll 1$ and $\langle v\rangle = \sqrt{1+|v|^2}$.

\ \\

\section{Results}

\subsection{Basic setting}

In this paper we assume the domain is a bounded open subset of $\mathbb{R}^3$ defined as $\O = \{x \in \R^3: \xi(x) <0\}$ and the boundary is defined as $\p\O=\{x\in \mathbb{R}^3: \xi(x)=0\}$ via a $C^3$ function $\xi : \R^3 \rightarrow \R$. Equivalently we assume that for all $q\in \partial \Omega$, there exists a $C^3$ function ${\color{black} \eta_q}$ and $0<\delta_1 \ll 1$, such that
\Be\label{O_p}
\eta_q:
B_+(0; \delta_1)
\ni \mathbf{x}_q := (\mathbf{x}_{q,1},\mathbf{x}_{q,2},\mathbf{x}_{q,3})
 \rightarrow
{\color{black} \mathbb{R}^3},
\Ee
{\color{black} where the map is one-to-one and onto to the image $\mathcal{O}_q := \eta_q (B_+ (0;\delta_1))$ when $\delta_1$ is sufficiently small. Moreover, $\eta_q (\mathbf{x}_q) \in \p\O$ if and only if $\mathbf{x}_{q,3}=0$ within the image of $\eta_q$.} We refer to \cite{EGKM2} for the construction of such $\xi$ and $\eta_q$. We further assume that the domain is strictly convex in the following sense:
 \Be\label{convex}
\sum_{i,j=1}^3\zeta_i \zeta_j\p_i\p_j \xi(x) \gtrsim |\zeta|^2  \  \text{ for   all }   x \in \bar{\O}  \text{ and }  \zeta \in \R^3.
 \Ee
Without loss of generality, we may assume that $\nabla \xi \neq 0$ near $\p\O$. Then the outward normal can be defined as 
\begin{equation}\label{normal}
n(x):=\frac{\nabla \xi(x)}{|\nabla \xi(x)|}.
\end{equation}

The boundary of the phase space is
\[\gamma:=\{(x,v)\in \p\O\times \mathbb{R}^3\}.\]
We decompose $\gamma$ as
\begin{align}
   \gamma_+ &:=  \{(x,v)\in \p\O\times \mathbb{R}^3: n(x)\cdot v>0\}, \text{ (the outgoing set) }\notag\\
    \gamma_- &:= \{(x,v)\in \p\O\times \mathbb{R}^3: n(x)\cdot v<0\}, \text{ (the incoming set)} \notag\\
    \gamma_0 & := \{(x,v)\in \p\O\times \mathbb{R}^3: n(x)\cdot v=0\}. \text{ (the grazing set)}\notag
\end{align}

\ \\

\subsection{Main result}

In this paper, we investigate the regularity of the dynamical problem \eqref{F}. We also analyze the rate at which the gradient of the solution to the dynamical problem \eqref{F} approaches the gradient of the solution to the steady problem \eqref{F_steady}. Without loss of generality,  we assume the wall temperature has a small fluctuation around $1$($T_0=1$ in Theorem \ref{thm:informal_W1p}). We denote $\mu$ as the global Maxwellian:
\begin{equation}\label{mu}
\mu:= \frac{1}{2\pi}e^{-\frac{|v|^2}{2}} = \sqrt{M_{1,0,1}(v)}.
\end{equation}
Then we consider the steady problem~\eqref{F_steady} as a perturbation around the Maxwellian:
\Be\label{linearization_steady}
F_s(x,v)=\mu(v)+ \sqrt{\mu(v)} f_s(x,v)  ,
\Ee
and consider the dynamical problem~\eqref{F} as a perturbation around the steady problem~\eqref{F_steady}
\begin{equation}\label{f_d}
F(t,x,v) = F_s(x,v) + \sqrt{\mu(v)} f(t,x,v).
\end{equation}
The equation for $f$ in~\eqref{f_d} reads
\begin{equation}\label{f_equation}
\p_t f +v\cdot \nabla_x f +\nu(v) f = K(f)+\Gamma(f_s,f)+\Gamma(f,f_s)+\Gamma(f,f),
\end{equation}
where we denote the linear Boltzmann operator as
\begin{equation}\label{Gamma}
 Lf=\nu(v) f - K(f)=-\frac{Q(\mu,\sqrt{\mu}f)}{\sqrt{\mu}}-\frac{Q(\sqrt{\mu}f,\mu)}{\sqrt{\mu}},
\end{equation}
and the nonlinear Boltzmann operator as
\begin{equation}\label{Gamma_f}
\Gamma(f,g) = Q(\sqrt{\mu}f,\sqrt{\mu}g)/\sqrt{\mu}.    
\end{equation}
The properties of these Boltzmann operators can be found in Lemma \ref{lemma:k_gamma}. Here $f_s$ is defined in~\eqref{linearization_steady}. 

The initial condition is given as 
\begin{equation}\label{initial}
f_0(x,v)=f(0,x,v)=(F(0,x,v)-F_s(x,v))/\sqrt{\mu}.    
\end{equation}
On the boundary $\gamma_-$, $F$ and $F_s$ are given by~\eqref{bc_F} and~\eqref{bc_F_steady}, hence the boundary condition of $f$ is given by 
\begin{equation}\label{f_bc}
f(t,x,v)|_{n(x)\cdot v<0} = \frac{M_W(x,v)}{\sqrt{\mu(v)}}\int_{n(x)\cdot u>0} f(t,x,u)\sqrt{\mu(u)}\{n(x)\cdot u\} \dd u.
\end{equation}

From the equation of $f$ in~\eqref{f_equation}, we define the initial condition for $\p_t f(0,x,v)$ as follows:
\begin{equation}\label{partial_t_f0}
\p_t f(0,x,v) : = -v\cdot \nabla_x f(0,x,v)-L(f(0))+\Gamma(f_s,f(0))+\Gamma(f(0),f_s)+\Gamma(f(0),f(0)).
\end{equation}

It is well-known that the Boltzmann equation possesses a generic singularity at the boundary. We adopt the following weight of \cite{GKTT}:
 \begin{definition} We define a \text{kinetic distance}:
\Be\label{kinetic_distance}
\begin{split}
\alpha(x,v)   : = \chi( \tilde{\alpha}(x,v)  ) ,  \ \
\tilde{\alpha}(x,v)   : = \sqrt{ |v \cdot \nabla_x \xi(x)|^2 - 2 \xi(x) (v \cdot \nabla_x^2 \xi(x) \cdot v)}, \ \ (x,v) \in \bar{\O} \times \R^3,
\end{split}
\Ee
where $\chi: [0,\infty) \rightarrow [0,\infty)$ stands for a non-decreasing smooth function such that
\Be\label{chi}
\chi (s) = s \ \text{for} \ s \in [0, 1/2 ],  \ \chi(s) = 1 \ \text{for} \ s \in [ 2, \infty ), \ and \  | \chi^\prime(s) | \leq 1 \ \text{for}  \  s \in [0,\infty).
\Ee
\end{definition}
\begin{remark}
We note that $\alpha\equiv 0$ on the grazing set $\gamma_0$. An important property of this kinetic weight is that $\alpha(x,v)$ is almost invariant along the characteristic, see Lemma \ref{lemma:velocity}.

Convexity is a necessary condition for studying the regularity of the Boltzmann equation. It is well-known that singularity propagates in non-convex domains~\cite{K}, and in such cases, we can only expect bounded variation regularity at best~\cite{GKTT2}. The convexity condition ensures the positivity of the kinetic weight in \eqref{kinetic_distance}. This weight has been instrumental in the development of the study of the regularity of the Boltzmann equation, leading to progress in recent years~\cite{CKL,CKQ,chen2}.
\end{remark}

In this paper, we first establish a weighted $C^1$ estimate for the dynamical problem that converges exponentially fast to the weighted $C^1$ estimate to the steady problem. 
\begin{theorem}\label{thm:dynamic_regularity}
Assume the domain is convex (\ref{convex}); {\color{black}the boundary is $C^3$;} $\sup_{x \in \p\O}|T_W(x)-T_0|\ll  1$ for some constant $T_0>0$; the initial condition~\eqref{initial} is continuous away from $\gamma_0$ and satisfy
\begin{equation}\label{int_initial_0}
\iint_{\O\times \mathbb{R}^3}f(0,x,v)\sqrt{\mu(v)} \dd x \dd v =0,    
\end{equation}
\begin{equation}\label{f0_stability}
\Vert wf(0)\Vert_\infty + |wf(0)|_\infty \ll 1, \ \ w(v):=e^{\theta|v|^2} \text{ for some } 0<\theta<1/4,
\end{equation}
and the compatibility condition
\begin{equation}\label{compatibility}
f(0,x,v)|_{n(x)\cdot v<0} = \frac{M_W(x,v)}{\sqrt{\mu(v)}} \int_{n(x)\cdot u>0}f(0,x,u)\sqrt{\mu(u)}\{n(x)\cdot u\} \dd u,
\end{equation}
then there exists a unique solution $f$ to the dynamical problem \eqref{f_d} and~\eqref{f_bc}. Moreover, $f$ is continuous away from $\gamma_0$, and for some $\lambda \ll 1$, it satisfies
\begin{equation}\label{dym_stability}
    e^{\lambda t}\Big[\| wf(t)\|_{\infty} +|wf(t)|_\infty\Big]\lesssim \Vert wf(0)\Vert_\infty + |wf(0)|_\infty.
\end{equation}

If we further assume $T_W(x)\in C^1(\p\O)$, the initial condition $\partial_t f(0,x,v)$ in \eqref{partial_t_f0} and $\alpha\nabla_x f(0,x,v)$ are continuous away from $\gamma_0$ and satisfy
\begin{equation}\label{f0_regularity}
\begin{split}
  \Vert w_{\tilde{\theta}}(v)\alpha(x,v)\nabla_x f(0)\Vert_\infty   &   <\infty , \ \ w_{\tilde{\theta}}(v) = e^{\tilde{\theta}|v|^2} \text{ for some }\tilde{\theta} \ll 1,  
\end{split}
\end{equation}
\begin{equation}\label{partial_f0_bdd}
 \Vert w \p_t f(0,x,v)\Vert_\infty  <\infty,   
\end{equation}
and satisfy the compatibility condition
\begin{equation}\label{compatibility_derivative}
\partial_t f(0,x,v)|_{\gamma-} = \frac{M_W(x,v)}{\sqrt{\mu(v)}}\int_{n(x)\cdot u>0} \p_t f(0,x,u)\sqrt{\mu(u)}\{n(x)\cdot u\} \dd u.
\end{equation}
Then for all $t>0$ and $\lambda \ll 1$ in~\eqref{dym_stability}, $f(t,x,v)\in C^1(\bar{\Omega}\times \mathbb{R}^3 \backslash \gamma_0)$ and satisfies:
\begin{equation}\label{weight_C1}
\begin{split}
 &e^{\lambda t}\Vert w_{\tilde{\theta}}(v)\alpha(x,v)\nabla_x f(t)\Vert_\infty   \\
 &\lesssim \Vert w_{\tilde{\theta}}(v)\alpha(x,v) \nabla_x f(0)\Vert_\infty+ \Vert w\p_t f(0)\Vert_\infty  + \sup_t e^{\lambda t}\{\Vert wf(t)\Vert_\infty + |wf(t)|_\infty\} .
\end{split}
\end{equation}
\end{theorem}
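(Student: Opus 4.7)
The plan is to establish the two displayed estimates in sequence. First I would prove the pure $L^\infty$ decay \eqref{dym_stability}, which is a perturbative statement around the steady solution $F_s$ constructed in \cite{EGKM}. Second, I would use this $L^\infty$ control as a black box to bootstrap to the weighted gradient estimate \eqref{weight_C1}. The key conceptual device for the second stage is to bypass a direct spatial differentiation of the diffuse boundary condition \eqref{f_bc}—which produces singular contributions on the grazing set $\gamma_0$—by first controlling the time derivative $\partial_t f$, and then using the transport equation itself to recover $v\cdot\nabla_x f$.

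\textbf{$L^\infty$ stability.} The equation \eqref{f_equation} is a linearized Boltzmann equation with diffuse boundary data \eqref{f_bc}, perturbed by $\Gamma(f_s,\cdot)+\Gamma(\cdot,f_s)$ and the quadratic term $\Gamma(f,f)$. Since $\|wf_s\|_\infty\ll 1$ from the steady theory of \cite{EGKM} and $\|wf(0)\|_\infty\ll 1$ by \eqref{f0_stability}, both corrections are small. I would follow the EGKM framework: express the solution via Duhamel along stochastic backward diffuse-reflection cycles, close an $L^2\to L^\infty$ bootstrap exploiting the spectral gap of the linearized operator $L$ in \eqref{Gamma}, and extract exponential decay at a small rate $\lambda$ that equals the spectral gap minus a small loss proportional to $\|wf_s\|_\infty$. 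The zero-mass condition \eqref{int_initial_0} rules out the neutral mode of $L$, while \eqref{compatibility} guarantees continuity of the iterates away from $\gamma_0$.

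\textbf{Weighted $C^1$ estimate.} Differentiating \eqref{f_equation} in $t$, one observes that $\partial_t f$ satisfies a linear equation of exactly the same form as $f$, with time-independent coefficients, since the BC \eqref{f_bc} is linear and $T_W$ is time-independent; moreover \eqref{compatibility_derivative} is the compatibility condition for this auxiliary problem. The very same $L^\infty$ machinery applied to $\partial_t f$ yields
\begin{equation*}
e^{\lambda t}\{\|w\partial_t f(t)\|_\infty+|w\partial_t f(t)|_\infty\}\lesssim \|w\partial_t f(0)\|_\infty+\sup_t e^{\lambda t}\{\|wf(t)\|_\infty+|wf(t)|_\infty\}.
\end{equation*}
Rearranging \eqref{f_equation} then gives pointwise control of $v\cdot\nabla_x f$ in terms of $\partial_t f$, $f$, $Kf$, and $\Gamma$ terms. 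To capture the transverse components of $\nabla_x f$, I would differentiate \eqref{f_equation} in $x$, multiply by the kinetic weight $\alpha(x,v) w_{\tilde\theta}(v)$, and integrate the resulting transport equation along backward characteristics. By Lemma \ref{lemma:velocity}, $\alpha$ is almost invariant along trajectories, so the weight propagates cleanly. Differentiating \eqref{f_bc} in $x$ produces a $\nabla_x M_W$ piece (bounded because $T_W\in C^1(\partial\Omega)$) and an $x$-derivative of the half-space integral of $f\sqrt\mu\,(n\cdot u)\,du$; the boundary piece of this derivative is supported where $n(x)\cdot u=0$, and is precisely killed by the vanishing of $\alpha$ on $\gamma_0$. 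The nonlinear and inhomogeneous contributions are absorbed by the smallness of $\|wf\|_\infty+\|wf_s\|_\infty$ together with the exponential prefactor $e^{\lambda t}$, and \eqref{f0_regularity} supplies the initial datum in the iteration.

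\textbf{Main obstacle.} The central difficulty is the simultaneous handling of two boundary pathologies when differentiating \eqref{f_bc}: the velocity Jacobian of the diffuse backward flow degenerates on $\gamma_0$, and the half-space integral over $\{n(x)\cdot u>0\}$ has an $x$-dependent domain whose differentiation produces grazing traces. The $\alpha$-weight is tailor-made to suppress both, but one must verify that the loss per bounce in the Duhamel iteration is controlled by a constant strictly smaller than $e^{\lambda t}$, uniformly across arbitrarily many bounces. This is where the smallness of $\sup_{\partial\Omega}|T_W-T_0|$ and the restriction $\lambda\ll 1$ are crucially used; without them, the iteration would only yield a time-growing bound, exactly as in \cite{GKTT}. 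Once this contractive bookkeeping is established, combining the pointwise bound on $v\cdot\nabla_x f$ with the weighted bound on $\alpha\nabla_x f$ and the non-degeneracy of the map $v\mapsto (v\cdot n,\text{tangential})$ away from the grazing set closes \eqref{weight_C1}.
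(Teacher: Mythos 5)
Your high-level plan coincides with the paper's: import the $L^\infty$ decay from the EGKM framework, control $\partial_t f$ by running the same decay machinery on the time-differentiated equation (this is the paper's Corollary \ref{corollary:time_derivative}), and then attack the $\alpha$-weighted gradient by differentiating the characteristic/Duhamel representation. The genuine gap is in your treatment of the differentiated boundary condition \eqref{f_bc}. You list a $\nabla_x M_W$ piece and a domain-boundary piece, and you claim the latter is ``killed by the vanishing of $\alpha$ on $\gamma_0$''; in fact it already vanishes because the integrand carries the factor $n(x)\cdot u$, and neither of these is the hard term. The hard term, which you never address, is $\int_{n\cdot u>0}\nabla_x f(t,\xb,u)\sqrt{\mu}\,(n\cdot u)\,du$, the boundary trace of the full spatial gradient of $f$ integrated over outgoing velocities. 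If you simply insert $\Vert w_{\tilde\theta}\alpha\nabla_x f\Vert_\infty$ you are left with a factor $\int_{n\cdot u>0}\frac{n(\xb)\cdot u}{w_{\tilde\theta}(u)\alpha(\xb,u)}\sqrt{\mu}\,du$, which by \eqref{bdr_alpha} and \eqref{singlarity_alpha} is bounded, but only by an $O(1)$ (not $o(1)$) constant; the resulting inequality cannot be absorbed into the left-hand side, and the estimate does not close.

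What actually closes this are two integration-by-parts devices that are absent from your outline. First, after reparametrizing the half-space integral in the chart $(\mathbf{x}^1_{p^1},\,T^t_{\mathbf{x}^1_{p^1}}\mathbf{v}^1_{p^1})$, the velocity-derivative part of the tangential differentiation (coming from the $\mathbf{x}^1$-dependence of $T^t_{\mathbf{x}^1_{p^1}}$) is removed by integration by parts in $\mathbf{v}^1$, using that the integrand $\mathbf{v}^1_{3}\sqrt{\mu}$ vanishes at grazing, as in \eqref{v_under_v}--\eqref{IBP_v}. Second, the remaining spatial part is expanded along the characteristic one more bounce, and its boundary contribution is converted, via the change of variable of Lemma \ref{Lemma: change of variable} and the geometric bounds of Lemma \ref{Lemma: nv<v2}, into an integral over $(\mathbf{x}^2,\tb^1)$ which can be integrated by parts in $\mathbf{x}^2$, trading the boundary trace of $\nabla_x f$ for $f$ itself at the cost of controllable Jacobian factors. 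A companion integration by parts in velocity — enabled by cutting off $t^1-s>\e$ and $n(x^1)\cdot v^1>\delta$, and controlled via Lemma \ref{lemma:nonlocal_to_local} — handles the analogous appearance of $\nabla_x f$ inside the $K$-expansion \eqref{nabla_x_5}. Finally, your ``loss per bounce uniformly over arbitrarily many bounces'' picture does not match the actual mechanism: the paper's argument is a finitely iterated (essentially two-layer) expansion followed by absorption of an $o(1)$ multiple of $\sup_t e^{\lambda t}\Vert w_{\tilde\theta}\alpha\nabla_x f(t)\Vert_\infty$ into the left-hand side, with the $o(1)$ supplied by the smallness of $\Vert wf_s\Vert_\infty$, $\sup_t e^{\lambda t}\Vert wf(t)\Vert_\infty$, and the $\e,\delta$ cutoffs — not by a contraction across an infinite Duhamel series.
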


\begin{remark}
According to \cite{GKTT}, the continuity assumption on $\alpha\nabla_x f(0)$ and the compatibility assumption \eqref{compatibility_derivative} imply that $f$ belongs to $C^1$ away from the grazing set. The additional assumption~\eqref{partial_f0_bdd} is needed to control the contribution of the time derivative $\partial_t f$ in the characteristic formula~\eqref{nabla_x_1} - \eqref{nabla_x_8}. 

Here, we can observe that the additional assumption \eqref{partial_f0_bdd} is not actually a very restrictive one. In fact, it is very similar to the assumption \eqref{f0_regularity}. To see this, we first note that the last four terms on the right-hand side of \eqref{partial_t_f0} can be bounded in $L^\infty$. Now, on the boundary $\partial\Omega$, let $\tau_1, \tau_2$, and $n(x)$ be the basis vectors. We can decompose the first term as follows:
\begin{align*}
v\cdot \nabla_x f(0,x,v) &= (n(x)\cdot v) \partial_n f(0,x,v) + \sum_{i=1,2} (\tau_i \cdot v) \partial_{\tau_i} f(0,x,v).
\end{align*}
We can see that the assumption $(n(x)\cdot v) \partial_n f(0,x,v)$ is bounded in $L^\infty$ is almost the same as \eqref{f0_regularity}. This is because both assumptions cancel out the singularity, and we have $|n(x)\cdot v| \thicksim \alpha(x,v)$.
\end{remark}

\begin{remark}
In this paper, we enhance the weighted $C^1$ estimate of the dynamical problem by making it uniform in time. Additionally, we prove that the weighted $C^1$ norm of the dynamical problem converges to the weighted $C^1$ norm of the steady problem with an exponential rate.    
\end{remark}

The weighted $C^1$ estimate~\eqref{weight_C1} allows us to study the $W^{1,p}$ estimate without any weight. The following theorem is the formal version of Theorem \ref{thm:informal_W1p}, which proves a $W^{1,p}$ estimate for both the steady and dynamical problems.
\begin{theorem}\label{thm:W1p}
Assume the domain is convex (\ref{convex}); the boundary is $C^3$; $\sup_{x \in \p\O}|T_W(x)-T_0|\ll  1$ for some constant $T_0>0$; $T_W(x)\in C^1(\p\O)$, then there is a unique steady solution to~\eqref{linearization_steady} and~\eqref{bc_F_steady}, moreover, $f_s$ belongs to $W^{1,p}(\Omega\times \mathbb{R}^3)$ for $p<3$:
\begin{equation}\label{f_s_W1p}
\Vert \nabla_x f_s\Vert_p  \lesssim \Vert T_W-T_0\Vert_{C^1(\p\O)}.
\end{equation}

Assume all conditions in Theorem \ref{thm:dynamic_regularity} are satisfied so that we have the decay~\eqref{weight_C1}, then the dynamical problem $f(t)$ in~\eqref{f_d} further belongs to $W^{1,p}(\Omega\times \mathbb{R}^3)$ for $p<3$ and
\begin{equation}\label{f_W1p}
\begin{split}
    &e^{\lambda t}\Vert \nabla_x f(t)\Vert_p  \lesssim e^{\lambda t}\Vert w_{\tilde{\theta}}(v)\alpha(x,v)\nabla_x f(t)\Vert_\infty  <\infty.
\end{split}
\end{equation}

\end{theorem}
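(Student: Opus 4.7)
The plan is to derive both $W^{1,p}$ estimates directly from the pointwise inequality
\[
|\nabla_x f(t,x,v)| \leq \frac{\|w_{\tilde\theta}\,\alpha\,\nabla_x f(t)\|_\infty}{w_{\tilde\theta}(v)\,\alpha(x,v)},
\]
and its steady analogue for $f_s$. Given the weighted $C^1$ control from Theorem \ref{thm:dynamic_regularity}, the entire task reduces to the geometric integral estimate $(w_{\tilde\theta}\alpha)^{-1}\in L^p(\Omega\times \R^3)$ for $p<3$; the factor $e^{\lambda t}$ in \eqref{f_W1p} is then inherited from \eqref{weight_C1}.

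For this integral, I proceed in three steps. \emph{Step 1 (lower bound on $\alpha$).} Using the definition \eqref{kinetic_distance}, the strict convexity \eqref{convex}, and the fact that $-\xi(x)>0$ inside $\Omega$, I show
\[
\tilde\alpha(x,v)^2 = |v\cdot\nabla\xi(x)|^2 - 2\xi(x)(v\cdot\nabla^2\xi(x)\cdot v) \gtrsim |v\cdot n(x)|^2 + |\xi(x)|\,|v|^2,
\]
which, combined with the cutoff \eqref{chi}, yields $\alpha(x,v)\gtrsim \min\{1,|v|\sqrt{(\omega\cdot n(x))^2+d(x)}\}$, where $\omega=v/|v|$ and $d(x):=|\xi(x)|$. \emph{Step 2 (velocity integral at fixed $x$).} Writing $v=\rho\omega$,
\[
\int_{\R^3}\frac{\dd v}{w_{\tilde\theta}(v)^p\,\alpha(x,v)^p}
\lesssim
\Big(\int_0^\infty \rho^{2-p}e^{-p\tilde\theta\rho^2}\,\dd\rho\Big)\Big(\int_{\S^2}\frac{\dd\omega}{((\omega\cdot n(x))^2+d(x))^{p/2}}\Big).
\]
The radial factor is finite precisely for $p<3$ (the singularity at $\rho=0$), and the angular factor, via $\mu=\omega\cdot n(x)$ followed by $\mu=\sqrt{d}\,u$, is bounded by $C\,d^{(1-p)/2}$ for $p>1$, by $C\log(1/d)$ for $p=1$, and by $C$ for $p<1$. \emph{Step 3 (spatial integral).} Tubular coordinates near $\partial\Omega$ give $\dd x\sim \dd S_y\,\dd s$ with $s=d(x)$, so $\int_\Omega d(x)^{(1-p)/2}\,\dd x \lesssim \int_0^{d_0} s^{(1-p)/2}\,\dd s <\infty$ iff $p<3$, completing the proof of \eqref{f_W1p}.

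For the steady bound \eqref{f_s_W1p}, the same computation applies once a weighted estimate $\|w_{\tilde\theta}\alpha\nabla_x f_s\|_\infty\lesssim \|T_W-T_0\|_{C^1(\p\Omega)}$ is available; this is the stationary counterpart of \eqref{weight_C1}, obtained by running the characteristic/Duhamel argument behind Theorem \ref{thm:dynamic_regularity} directly on the steady system \eqref{F_steady}--\eqref{bc_F_steady}, so that the bound inherits linear dependence on $\|T_W-T_0\|_{C^1}$ from the inhomogeneous boundary datum. The main obstacle, and the reason for taking care in Step 1, is extracting the sharp exponent $p<3$: it results from the interplay of the $\rho^{2-p}$ singularity at $v=0$ in Step 2 with the boundary-layer exponent $(1-p)/2$ in Step 3. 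A cruder lower bound that drops the factor $|v|^2$ multiplying $|\xi(x)|$ (for instance $\alpha\gtrsim \sqrt{(v\cdot n)^2+d}$) would only yield $p<2$, so one genuinely needs uniform convexity in all directions to push through to the $W^{1,3-}$ regularity claimed in the theorem.
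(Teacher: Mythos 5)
Your proof is correct and it computes the same integral $\iint_{\Omega\times\R^3} (w_{\tilde\theta}\alpha)^{-p}$ as the paper, but by a genuinely different route. The paper first replaces $\alpha(x,v)^{-1}$ with $|n(\xb(x,v))\cdot v|^{-1}$ (using $\tilde\alpha\sim|n(\xb)\cdot v|$ from the velocity lemma and absorbing $\tilde\alpha/\alpha\lesssim\langle v\rangle$ into the exponential weight), then converts the volume integral to an integral over the outgoing boundary set via the change-of-variables identity $\iint_{\Omega\times\R^3}g=\int_{\gamma_+}\int_0^{\tb}g(x-sv,v)\,|n(x)\cdot v|\,\dd s\,\dd v\,\dd S_x$ (Lemma \ref{lemma:cov}), uses the Taylor/convexity bound $\tb(x,v)\lesssim|n(\xb)\cdot v|/|v|^2$ to collapse the $s$-integral, uses $|n(x)\cdot v|\sim|n(\xb)\cdot v|$, and finishes with spherical coordinates on $\gamma_+$, where both the radial integral $\int_0^\infty r^{2-p}e^{-p\tilde\theta r^2}\dd r$ and the angular integral $\int|\cos\theta|^{2-p}\dd\theta$ require $p<3$. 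You instead expand $\tilde\alpha^2=|v\cdot\nabla\xi|^2-2\xi\,(v\cdot\nabla^2\xi\cdot v)\gtrsim|v\cdot n|^2+|\xi|\,|v|^2$ directly from the definition and strict convexity, Fubini in spherical velocity coordinates to separate a radial factor $\int\rho^{2-p}e^{-p\tilde\theta\rho^2}\dd\rho$ (requiring $p<3$) from an angular factor $\lesssim d(x)^{(1-p)/2}$, and then integrate the latter in tubular spatial coordinates, $\int_0^{d_0}s^{(1-p)/2}\dd s$, requiring $p<3$ again. These are two different decompositions of the same final divergence: your spatial boundary-layer constraint plays the role the paper's angular constraint plays after its change of variables. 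Your method avoids Lemmas \ref{lemma:cov} and \ref{lemma:convex_property} entirely at the cost of working with the explicit quadratic form of $\tilde\alpha$; the paper's method is more modular and reuses lemmas that are also needed elsewhere. One small point you should make explicit: the lower bound $\tilde\alpha^2\gtrsim|v\cdot n(x)|^2+|\xi(x)|\,|v|^2$ uses $|\nabla\xi(x)|\gtrsim 1$, which is only guaranteed near $\p\O$; in a compact interior region where $\nabla\xi$ may vanish you should instead use $|\xi|\gtrsim 1$ to get $\tilde\alpha\gtrsim|v|$ and hence $\alpha\gtrsim\min\{1,|v|\}$, which already gives an integrable velocity factor for $p<3$ over the (finite-volume) interior. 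You implicitly do this by restricting the tubular coordinates to a neighbourhood of $\p\O$, but it is worth stating. For the steady estimate \eqref{f_s_W1p}, you correctly identify that the only missing ingredient is the weighted $C^1$ bound $\|w_{\tilde\theta}\alpha\nabla_x f_s\|_\infty\lesssim\|T_W-T_0\|_{C^1}$, which the paper cites as \eqref{estF_n} from \cite{CK} and whose derivation is sketched in Section \ref{sec:proof_sketch} exactly as you describe.
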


\begin{remark}
It is worth mentioning that the $W^{1,p}$ estimate we establish does not involve any weight. As shown in \eqref{f_W1p}, our result proves that the $W^{1,p}$ norm of the dynamical problem converges to the $W^{1,p}$ norm of the steady problem with exponential rate.
\end{remark}

\begin{remark}
 It is important to note that Theorem \ref{thm:W1p} only establishes an interior $W^{1,p}$ estimate without any weight for $p<3$. {\color{black}We don't expect the boundary regularity result up to $W^{1,p}$ for $p<3$. Indeed, Kim and his collaborators construct a counterexample of such boundary regularity in Lemma 12 of \cite{GKTT}, in which they prove $\int_0^1 \int_{\gamma_-}|\nabla_x f(s,x,v)|^2 \dd \gamma \dd s = +\infty$ for free transport equation.} 

However, the interior estimate fails for the second-order derivative. We refer detailed discussion to Appendix.   
\end{remark}

\textbf{Outline.} The structure of this paper is straightforward. In Section \ref{sec:prelim}, we present several lemmas that serve as preliminary results. In Section \ref{sec:C1}, we establish the weighted $C^1$ estimate and prove Theorem \ref{thm:dynamic_regularity}. Section \ref{sec:W1p} is dedicated to applying Theorem \ref{thm:dynamic_regularity} to obtain the $W^{1,p}$ estimate in Theorem \ref{thm:W1p}.

\ \\

\section{Preliminary}\label{sec:prelim}
In this section, we will begin by introducing and reparametrizing the backward exit position as stochastic cycles. Next, we will present several lemmas that describe the properties of stochastic cycles, Boltzmann operators, and kinetic weight.

%first record the well-posedness and regularity of the steady problem~\eqref{linearization_steady}, and the dynamical stability of the dynamical problem~\eqref{f_d}. Then we 

\ \\

\subsection{Stochastic cycles and reparametrization.}
We mainly define the stochastic cycles and the reparametrization, then deduce another representation of the boundary condition in~\eqref{eqn: diffuse for f}. Denote $\tb(x,v),\xb(x,v)$ as the backward exit time and backward exit position. 
\Be\label{BET}
 \tb(x,v) :=   \sup \{ s>0:x-sv \in \O\},\quad {\color{black}\xb(x,v):= x-  \tb(x,v) v.}
\Ee

\begin{definition}\label{definition: sto cycle}
We define a stochastic cycles as $(x^0,v^0)= (x,v) \in \bar{\O} \times \R^3$ and inductively
\begin{align}
&x^1:= \xb(x,v), \   v^1 \in \{v^1\in \mathbb{R}^3:n(x^1)\cdot v^1>0\} , \label{vi}\\
&  v^{k}\in \{v^{k}\in \mathbb{R}^3:n(x^k)\cdot v^k>0\}, \ \ \text{for} \  k \geq 1,
\label{xi}
\\
 &x^{1} := \xb(x^k, v^k) , \ \tb^{k}:= \tb(x^k,v^k) \ \ \text{for} \  n(x^k) \cdot v^k\geq 0  . \label{tbi}
\end{align}
Choose $t\geq0.$ We define $t^0=t$ and
\Be\label{ti}
t^{k} =  t-  \{ \tb + \tb^1 + \cdots + \tb^{k-1}\},  \ \ \text{for} \  k \geq 1.
\Ee

\end{definition}

\begin{remark}
 Here $x^{k+1}$ depends on $(x,v,x^1,v^1,\cdots, x^k,v^k)$, while $v^k$ is a free parameter whose domain~\eqref{xi} only depends on $x^k$.
\end{remark}

 Recall \eqref{O_p}. Since the boundary is compact and $C^3$, for fixed $0<\delta_1 \ll 1$, we choose a finite number of $p \in \tilde{\mathcal{P}} \subset\p\O$ and $0<\delta_2\ll 1$ such that $\mathcal{O}_p=\eta_p(
B_+(0; \delta_1)) \subset B(p;\delta_2) \cap \bar{\O}$ and $\{\mathcal{O}_p \}$ forms a finite covering of $\partial \Omega$. We further choose an interior covering $\mathcal{O}_0 \subset \O$ such that $\{ \mathcal{O}_p\}_{p \in \mathcal{P}}$ with $\mathcal{P} = \tilde{\mathcal{P}}\cup \{0\}$ forms an open covering of $\bar\O$.
We define a partition of unity
\Be\label{iota}
\mathbf{1}_{\bar\O} (x)=
\sum_{p \in \mathcal{P}} \iota_p(x)
 \text{ such that }0 \leq \iota_p(x) \leq 1, \ \
  \iota_p(x) \equiv 0 \ \text{for} \ x  \notin \mathcal{O}_p.
 \Ee
 Without loss of generality (see \cite{KL}) we can always reparametrize $\eta_p$ such that $\partial_{\mathbf{x}_{p,i}} \eta_p \neq 0$ for $i=1,2,3$ at $\mathbf{x}_{p,3}=0$, and an \textit{orthogonality} holds as
%\[\eta(y_1,y_2,0)\in \partial \Omega, \text{ and } \]
\Be\label{orthogonal}
  \partial_{\mathbf{x}_{p,i}}\eta_p \cdot \partial_{\mathbf{x}_{p,j}}\eta_p =0 \ \ \text{at} \ \ \mathbf{x}_{p,3}=0 \text{  for  } i\neq j \text{ and } i,j\in \{1,2,3\}.
\Ee
And at $\mathbf{x}_{p,3}=0$, the $\mathbf{x}_{p,3}$ derivative gives the outward normal: 
\begin{equation}\label{normal: p_3}
{\color{black}n_p(\mathbf{x}_p) = \frac{\partial_{\mathbf{x}_{p,3}}\eta_p}{\langle \partial_{\mathbf{x}_{p,3}}\eta_p,\partial_{\mathbf{x}_{p,3}}\eta_p\rangle}.}    
\end{equation}
 For simplicity, we denote
\begin{equation}\label{partial_i eta}
 \partial_i \eta_p(\mathbf{x}_p): = \partial_{\mathbf{x}_{p,i}} \eta_p.
\end{equation}

\begin{definition} For $x \in \bar{\O}$, we choose $p \in\mathcal{P}$ as in (\ref{O_p}). We define
\begin{align}
    T_{\mathbf{x}_p}&
    =\left(
                               \begin{array}{ccc}
           \frac{\p_1 \eta_p(\mathbf{x}_p)}{\sqrt{g_{p,11}(\mathbf{x}_p) }}
           &      \frac{\p_2 \eta_p(\mathbf{x}_p)}{\sqrt{g_{p,22}(\mathbf{x}_p) }}
           &     \frac{\p_3 \eta_p(\mathbf{x}_p)}{\sqrt{g_{p,33}(\mathbf{x}_p) }}
            \\
                               \end{array}
                             \right)^t,\label{T}
\end{align}
with $g_{p,ij}(\mathbf{x}_p)    =\langle \partial_i \eta_p(\mathbf{x}_p),\partial_j \eta_p(\mathbf{x}_p)\rangle$ for $i,j\in \{1,2,3\}$.
Here $A^t$ stands as the transpose of a matrix $A$. Note that when $\mathbf{x}_{p,3}=0$, $T_{\mathbf{x}_p}       \frac{\p_i \eta_p(\mathbf{x}_p)}{\sqrt{g_{p,ii}(\mathbf{x}_p) }}
  = e_i$ for $i=1,2,3$ where $\{e_i\}$ is a standard basis of $\R^3$.

We define
\Be\label{bar_v}
\mathbf{v}_j(\mathbf{x}_p) = \frac{\p_j \eta_p(\mathbf{x}_p)}{\sqrt{g_{p,jj}(\mathbf{x}_p) }} \cdot  v.
\Ee
\end{definition}

We note that from (\ref{orthogonal}), the map $T_{\mathbf{x}_p}$ is an orthonormal matrix when $\mathbf{x}_{p,3}=0$. Therefore both maps $v \rightarrow \mathbf{v} (\mathbf{x}_p )$ and $\mathbf{v} (\mathbf{x}_p ) \rightarrow v$ have a unit Jacobian {\color{black} at $\mathbf{x}_{p,3}=0$}. This fact induces a new representation of boundary integration of diffuse boundary condition in~\eqref{f_bc}: For $x \in \p\O$ and $p \in\mathcal{P}$ as in (\ref{O_p}),
\begin{equation}
\begin{split}
\label{eqn: diffuse for f}
 \int_{n(x)\cdot v>0}f(t,x,v)\sqrt{\mu(v)}\{n(x)\cdot v\}\dd v
 = \int_{\mathbf{v} _{p ,3}>0}f(t, \eta_{p } (\mathbf{x}_{p }    ), T^t_{\mathbf{x} _{p }} \mathbf{v} ( \mathbf{x}_p) )\sqrt{\mu(\mathbf{v} (\mathbf{x}_p))}\mathbf{v} _{ 3}(\mathbf{x}_p) \dd\mathbf{v}  (\mathbf{x}_p).
\end{split}
\end{equation}
%\end{lemma}
%\begin{proof}The proof is simply followed by a change of variables $v \to \mathbf{v}(\mathbf{x}_p)$ with a unit Jacobian.
We have used the fact of $\mu(v )=\mu(|v |)=\mu(|T^t_{\mathbf{x}_{p } }\mathbf{v} (\mathbf{x}_p) |)=\mu(|\mathbf{v} (\mathbf{x}_p)  |)=\mu(\mathbf{v} (\mathbf{x}_p) ) $ and $\mathbf{x}_{p,3}=0$. %\end{proof}

 Now we reparametrize the stochastic cycle using the local chart defined in Definition \ref{definition: sto cycle}.

\begin{definition}\label{definition: chart}
Recall the stochastic cycles (\ref{xi}). For each cycle $x^k$ let us choose $p^k \in \mathcal{P}$ in (\ref{O_p}). Then we denote
\Be\begin{split}\label{xkvk}
\mathbf{x}^k_{p^k}&:= (\mathbf{x}^k_{p^k,1}, \mathbf{x}^k_{p^k,2},0)   \text{ such that }
\eta_{p^k} (\mathbf{x}^k_{p^k}) = x^k, \ \ \text{for} \  k \geq 1,
\\
\mathbf{v}^k_{p^k,i}&:=   \frac{\p_i \eta_{p^k}(\mathbf{x}_{p^k}^k)}{\sqrt{g_{p^k,ii}(\mathbf{x}_{p^k}^k) }} \cdot  v^k ,  \ \ \text{for} \  k \geq 1. 
\end{split}
\Ee
From \eqref{normal: p_3}, the outward normal at $x^k$ can also be denoted as
\begin{equation}\label{normal at xk}
    {\color{black}n(x^k)} {\color{black} = n_{p^k}(\mathbf{x}_{p^k}^k). }
\end{equation}

Finally, we define
 \Be
\p_{\mathbf{x}^{k}_{p^{k},i}}[
a( \eta_{p^{k}} ( \mathbf{x}_{p^{k} }^{k}  ),    {v}^{k} ) ]
 :=
\frac{\p \eta_{p^{k}}(\mathbf{x}^{k}_{p^{k},i})}{\p \mathbf{x}^{k}_{p^{k},i}}
\cdot \nabla_x a ( \eta_{p^{k}} ( \mathbf{x}_{p^{k} }^{k}  ), v^{k}) , \ \ i=1,2.
\label{fBD_x1}
\Ee

\end{definition}

\ \\

\subsection{Properties of stochastic cycles, Boltzmann operators, and kinetic weight.}

The derivative of $\xb(x,v),\tb(x,v)$ is given by the following lemma:
\begin{lemma}[Lemma 2.1 in \cite{CK}]\label{lemma:tb_xb_x}
The derivative of $\tb$ to $x_j$ and $v_j$ reads
\begin{equation}\label{xi deri tb}
\frac{\p \tb^{1}}{\p x_j^{1}} =
	\frac{1}{ \mathbf{v}^{2}_{p^{2},3}  }
	\frac{  \p_{3} \eta_{{p}^{2}}(\mathbf{x}_{p^2}^2)}{\sqrt{g_{{p}^{2},33}(\mathbf{x}_{p^2}^2)}} \cdot
	e_j,
\end{equation}

\begin{equation}\label{vi deri tb}
\frac{\partial \tb^{1}}{\partial v_j^{1}}=-\frac{\tb^{1}e_j}{\mathbf{v}^{2}_{p^{2},3}}\cdot \frac{\partial_3 \eta_{p^{2}}}{\sqrt{g_{p^{2},33}}}\Big|_{x^{2}}.
\end{equation}
This leads to
\Be
\begin{split}\label{nabla_tbxb}
\nabla_x \tb = \frac{n(\xb)}{n(\xb) \cdot v},\ \
\nabla_v \tb = - \frac{\tb n(\xb)}{n(\xb) \cdot v},\\
\nabla_x \xb = Id_{3\times 3} - \frac{n(\xb) \otimes v}{n(\xb) \cdot v},\ \
\nabla_v \xb = - \tb Id + \frac{ \tb n(\xb) \otimes v}{n(\xb) \cdot v}.
\end{split}\Ee

For $i=1,2$, $j=1,2$, $k=0,1$(recall $x^0=x$ in Definition \ref{definition: sto cycle}), 
\begin{equation}\label{xi deri xbp}
\frac{\partial \mathbf{x}_{p^{k+1},i}^{k+1}}{\partial x^{k}_j}=\frac{1}{\sqrt{g_{p^{k+1}, ii} (\mathbf{x}^{k+1}_{p^{k+1} } )}}
\left[
\frac{\p_{i} \eta_{p^{k+1}} (\mathbf{x}^{k+1}_{p^{k+1} } ) }{\sqrt{g_{p^{k+1},ii}(\mathbf{x}^{k+1}_{p^{k+1} } ) }}
- \frac{\mathbf{v}_{p^{k+1}, i}^{k+1}}{\mathbf{v}_{p^{k+1}, 3}^{k+1}}
\frac{\p_{3} \eta_{p^{k+1}} (\mathbf{x}^{k+1}_{p^{k+1} } )  }{\sqrt{g_{p^{k+1},33}(\mathbf{x}^{k+1}_{p^{k+1} } )}}
\right] \cdot e_j,
\end{equation}

\begin{equation}\label{xip deri xbp}
\frac{\p \mathbf{x}^{2}_{p^{2},i}}{\p{\mathbf{x}^{1  }_{p^{1 },j}}} = \frac{1}{\sqrt{g_{p^{2}, ii} (\mathbf{x}^{2}_{p^{2} } )}}
\left[
\frac{\p_{i} \eta_{p^{2}} (\mathbf{x}^{2}_{p^{2} } ) }{\sqrt{g_{p^{2},ii}(\mathbf{x}^{2}_{p^{2} } ) }}
- \frac{\mathbf{v}^{2}_{p^{2}, i}}{\mathbf{v}^{2}_{p^{2}, 3}}
\frac{\p_{3} \eta_{p^{2}} (\mathbf{x}^{2}_{p^{2} } )  }{\sqrt{g_{p^{2},33}(\mathbf{x}^{2}_{p^{2} } )}}
\right] \cdot \p_j \eta_{p^{1}}(\mathbf{x}^{1}_{p^{1} } ) ,
\end{equation}

\begin{equation}\label{vi deri xbp}
\frac{\partial \mathbf{x}^{2}_{p^{2},i}}{\partial v_j^{1}}=-\tb^{1}e_j \cdot \frac{1}{\sqrt{g_{p^{2},ii}(\mathbf{x}_{p^{2}}^{2}))}} \Big[\frac{\partial_i \eta_{p^{2}}(\mathbf{x}^{2}_{p^{2} } )}{\sqrt{g_{p^{2},ii}(\mathbf{x}^{2}_{p^{2} } )}}- \frac{\mathbf{v}_{p^{1},i}^{1}}{\mathbf{v}_{p^{2},3}^{2}}\frac{\partial_3 \eta_{p^{2}}(\mathbf{x}^{2}_{p^{2} } )}{\sqrt{g_{p^{2},33}(\mathbf{x}^{2}_{p^{2} } )}} \Big].
\end{equation}
\end{lemma}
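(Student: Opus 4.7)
The plan is to derive every identity from the implicit function theorem applied to the defining equation $\xi(x-\tb(x,v)v)=0$ and then push the resulting Euclidean formulas through the local chart $\eta_p$ using the boundary orthogonality (\ref{orthogonal}).

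\textbf{Step 1 (derivatives of $\tb$ and $\xb$).} Differentiating $\xi(x-\tb v)=0$ in $x_j$ gives $\nabla\xi(\xb)\cd(e_j-v\,\p_{x_j}\tb)=0$, so
\[
\p_{x_j}\tb=\frac{\nabla\xi(\xb)\cd e_j}{\nabla\xi(\xb)\cd v}=\frac{n(\xb)\cd e_j}{n(\xb)\cd v}
\]
after normalizing via $n=\nabla\xi/|\nabla\xi|$. Differentiating in $v_j$ instead contributes an extra $-\tb e_j$ and yields $\p_{v_j}\tb=-\tb\,n(\xb)\cd e_j/[n(\xb)\cd v]$. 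Plugging these scalar derivatives into $\xb=x-\tb v$ produces the matrix identities in (\ref{nabla_tbxb}). Specializing to the cycle point $(x^1,v^1)$ and using $n(x^2)=\p_3\eta_{p^2}/\sqrt{g_{p^2,33}}$ from (\ref{normal: p_3}) rewrites these as (\ref{xi deri tb}) and (\ref{vi deri tb}).

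\textbf{Step 2 (chart-coordinate derivatives).} Compose with the chart to write $\mathbf{x}^2_{p^2}=\eta_{p^2}^{-1}(\xb(x^1,v^1))$ and differentiate. Since $\xb(x^1,v^1)\in\p\O$, the inverse Jacobian $D\eta_{p^2}^{-1}$ is evaluated at $\mathbf{x}^2_{p^2,3}=0$, where (\ref{orthogonal}) makes $\{\p_i\eta_{p^2}\}_{i=1,2,3}$ an orthogonal frame, and hence for any $w\in\R^3$,
\[
D\eta_{p^2}^{-1}\,w=\sum_{i=1}^{3}\frac{\p_i\eta_{p^2}\cd w}{g_{p^2,ii}}\,e_i.
\]
Combining this with $\nabla_{x^1}\xb=I-n(x^2)\otimes v^1/[n(x^2)\cd v^1]$ from Step 1 and $n(x^2)=\p_3\eta_{p^2}/\sqrt{g_{p^2,33}}$, the $i$-th component ($i=1,2$) of $D\eta_{p^2}^{-1}\,\nabla_{x^1}\xb\,e_j$ collapses to the bracketed expression in (\ref{xi deri xbp}), where the normal-direction subtraction comes precisely from the rank-one term of $\nabla_{x^1}\xb$. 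The identity (\ref{xip deri xbp}) follows by a further composition with the chart at $x^1$, i.e.\ by multiplying by $\p_j\eta_{p^1}(\mathbf{x}^1_{p^1})$; and (\ref{vi deri xbp}) follows by substituting the $v^1$-derivatives of $\xb$, which differ from the $x^1$-derivatives only by the scalar factor $-\tb^1$.

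\textbf{Main obstacle.} The only delicate point is the inversion of $D\eta_{p^2}$: off the boundary the chart basis is generally non-orthogonal, so a direct inversion would require handling the full metric $g_{p^2,ij}$. The formulas collapse precisely because the landing point lies on $\{\mathbf{x}_{p^2,3}=0\}$, where (\ref{orthogonal}) restores orthogonality and each coordinate projection reduces to a single scalar product against $\p_i\eta_{p^2}/\sqrt{g_{p^2,ii}}$. Everything else is the implicit function theorem plus the chain rule, and the mixed term $-(\mathbf{v}/\mathbf{v}_3)\,\p_3\eta/\sqrt{g_{33}}$ appearing in (\ref{xi deri xbp})-(\ref{vi deri xbp}) is simply the normal piece of $\nabla\xb$ re-expressed in the chart basis.
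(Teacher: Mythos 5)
Your proof is correct. The paper states this lemma without giving a proof (it is cited as Lemma 2.1 of \cite{CK}), so there is no in-paper argument to compare against; your route --- implicit differentiation of $\xi(x-\tb v)=0$ to get \eqref{nabla_tbxb}, then the chain rule through $\eta_{p^{k+1}}^{-1}$ together with the orthogonality \eqref{orthogonal} at $\mathbf{x}^{k+1}_{p^{k+1},3}=0$ to invert $D\eta_{p^{k+1}}$ componentwise --- is exactly the standard derivation behind these formulas. Your observation that the $v$-derivatives of $\xb$ differ from the $x$-derivatives only by the scalar factor $-\tb$ (since $\p_{v_j}\xb=-\tb\,\p_{x_j}\xb$) lets \eqref{vi deri tb} and \eqref{vi deri xbp} be read off from \eqref{xi deri tb} and \eqref{xi deri xbp} with no further work, which is the cleanest way to see it. Two small remarks worth noting: the displayed normal \eqref{normal: p_3} in the paper is missing a square root (it should read $\p_3\eta_p/\sqrt{g_{p,33}}$, as your Step~1 and as the form of \eqref{xi deri tb} both require); and the factor $\mathbf{v}^{1}_{p^{1},i}$ appearing in the paper's \eqref{vi deri xbp} is, by your $-\tb^1$ relation to \eqref{xi deri xbp}, really the velocity $v^1$ projected onto the chart basis at $x^2$ (i.e.\ $\mathbf{v}^{2}_{p^{2},i}$), a notational slip in the cited formula rather than a defect in your argument.
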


The properties of the collision operator $\nu(v)$, $Kf$ and $\Gamma(f,f)$ in~\eqref{f_equation} are summarized in Lemma \ref{lemma:k_gamma} and Lemma \ref{Lemma: k tilde}.

\begin{lemma}\label{Lemma: k tilde}
The linear Boltzmann operator $K(f)$ in~\eqref{f_equation} is given by
\begin{equation}\label{Kf}
Kf(x,v)=\int_{\mathbb{R}^3}\mathbf{k}(v,u)f(x,u)\dd u.    
\end{equation}

For some $\varrho>0$, the kernel $\mathbf{k}(v,u)$ satisfies:
\Be\label{k_varrho}
 |\mathbf{k}  (v,u)| \lesssim \mathbf{k}_\varrho (v,u), \  | \nabla_ u\mathbf{k}  (v,u)| \lesssim  \langle u\rangle\mathbf{k}_\varrho (v,u)/|v-u|, \ \mathbf{k}_\varrho (v,u) := e^{- \varrho |v-u|^2}/ |v-u|.
  \Ee

If $0<\frac{\tilde{\theta}}{2}<\varrho$, if $0<\tilde{\varrho}<  \varrho- \frac{\tilde{\theta}}{2}$,
\begin{equation}\label{k_theta}
\mathbf{k}(v,u) \frac{e^{\tilde{\theta} |v|^2}}{e^{\tilde{\theta} |u|^2}} \lesssim  \mathbf{k}_{\tilde{\varrho}}(v,u) ,
\end{equation}
where $\mathbf{k}$ and $\mathbf{k}_{\tilde{\varrho}}$ are defined in~\eqref{k_varrho}. 

Moreover, the derivative on $\mathbf{k}(v,u)$ shares similar property: for some $\tilde{\varrho}<\varrho-\frac{\tilde{\theta}}{2}$,
\begin{equation}\label{nabla_k_theta}
| \nabla_v \mathbf{k}(v,u)| \frac{e^{\tilde{\theta}| v|^2}}{e^{\tilde{\theta}| u|^2}} \lesssim \frac{[1+|v|^2]\mathbf{k}_{\tilde{\varrho}}(v,u)}{| v-u|}    .
\end{equation}

\end{lemma}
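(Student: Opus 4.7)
The strategy is to first establish the explicit integral representation of $K$ via the Carleman (or Grad) change of variables applied to the definition of $K$ in \eqref{Gamma}. Expanding $Q(\mu, \sqrt{\mu} f)$ and $Q(\sqrt{\mu} f, \mu)$ and substituting the Carleman parametrization of the collision cylinder, one arrives at $Kf(x,v) = \int_{\R^3} \mathbf{k}(v,u) f(x,u) \dd u$ together with Grad's classical pointwise estimate
\begin{equation*}
|\mathbf{k}(v,u)| \lesssim \frac{1}{|v-u|} \exp\!\left(-\frac{|v-u|^2}{8} - \frac{(|v|^2 - |u|^2)^2}{8|v-u|^2}\right).
\end{equation*}
Simply discarding the second, non-positive term in the exponent yields \eqref{k_varrho} with any $\varrho < 1/8$. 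The gradient bound $|\nabla_u \mathbf{k}(v,u)| \lesssim \langle u \rangle \mathbf{k}_\varrho(v,u)/|v-u|$ follows from differentiating the explicit Grad representation under the integral: each derivative of $\sqrt{\mu(u)}$ brings down a factor of at most $\langle u \rangle$, while differentiating the Carleman parametrization produces the additional $|v-u|^{-1}$.

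For the weighted estimates \eqref{k_theta} and \eqref{nabla_k_theta}, the core trick is to play the weight $e^{\tilde\theta(|v|^2 - |u|^2)}$ off against the fine Grad exponent $(|v|^2 - |u|^2)^2/|v-u|^2$. Using the factorization $|v|^2 - |u|^2 = (v-u)\cdot(v+u)$ and Young's inequality in the form
\begin{equation*}
\tilde\theta(|v|^2 - |u|^2) \leq \epsilon \, \frac{(|v|^2 - |u|^2)^2}{|v-u|^2} + \frac{\tilde\theta^2}{4\epsilon} |v-u|^2,
\end{equation*}
one absorbs the weight into Grad's Gaussian at the cost of reducing the decay rate of the residual $|v-u|^2$ term. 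Choosing $\epsilon$ slightly below the Grad constant $1/8$ leaves a residual rate strictly greater than $\varrho - \tilde\theta/2$, which is exactly the hypothesis on $\tilde\varrho$. Tracking the constants then gives \eqref{k_theta}.

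For \eqref{nabla_k_theta}, I would apply the same absorption argument to $\nabla_v \mathbf{k}$ obtained by differentiating Grad's representation. The derivative $\nabla_v$ acting on the quadratic Grad exponent brings down a factor linear in $v$ times a factor linear in the exponent; after combining and bounding crudely this accounts for the polynomial prefactor $[1+|v|^2]$ and the additional $|v-u|^{-1}$ that appear in the statement, while the Gaussian part is treated by the same Young inequality. The main (and essentially the only) obstacle is the bookkeeping in this absorption step: one must verify that the hypothesis $\tilde\theta/2 < \varrho$ permits a legitimate choice of $\epsilon$ so that both the $b^2$-term can be absorbed and the residual $a^2$-rate remains strictly positive and above $\tilde\varrho$. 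Everything else is a routine (though somewhat tedious) Gaussian manipulation, so this lemma amounts to a careful packaging of well-known Grad-type estimates.
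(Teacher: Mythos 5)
Your proposal is correct and follows essentially the same route as the paper: start from Grad's explicit kernel representation $\mathbf{k}=\mathbf{k}_1+\mathbf{k}_2$, obtain \eqref{k_varrho} by discarding the non-positive cross term in the Gaussian exponent, and then prove \eqref{k_theta}–\eqref{nabla_k_theta} by absorbing the weight $e^{\tilde\theta(|v|^2-|u|^2)}$ into Grad's Gaussian. The only cosmetic difference is that you phrase the absorption via Young's inequality, whereas the paper substitutes $\eta=v-u$ and performs an equivalent discriminant/negative-definiteness check on the resulting quadratic form in $|\eta|$ and $v\cdot\eta/|\eta|$.
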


\begin{proof}

\textit{Proof of \eqref{k_theta}}

From the standard Grad estimate in \cite{R}, the $\mathbf{k}$ in \eqref{k_varrho} equals to $\mathbf{k}_1(v,u)+\mathbf{k}_2(v,u)$, where
\begin{align}
\mathbf{k}_1(v,u)   &= C_{\mathbf{k}_1}|u-v|e^{-\frac{|v|^2+|u|^2}{4}}, \label{k1}\\
 \mathbf{k}_2(v,u)  & = C_{\mathbf{k}_2}\frac{1}{|u-v|}e^{-\frac{1}{8}|u-v|^{2}-\frac{1}{8}
\frac{(|u|^{2}-|v|^{2})^{2}}{|u-v|^{2}}}. \label{k2}
\end{align}
Then for some $\frac{1}{8}\geq \varrho>0$,
		\Be\notag
		\begin{split}
			\mathbf{k}(v,u) \frac{e^{\tilde{\theta} |v|^2}}{e^{\tilde{\theta} |u|^2}}
			\lesssim  \frac{1}{|v-u| } \exp\left\{- {\varrho} |v-u|^{2}
			-  {\varrho} \frac{ ||v|^2-|u|^2 |^2}{|v-u|^2} + \tilde{\theta} |v|^2 - \tilde{\theta} |u|^2
	\right\}.
		\end{split}\Ee

		Let $v-u=\eta $ and $u=v-\eta $, the exponent equals
		\begin{eqnarray*}
			&&- \varrho|\eta |^{2}-\varrho\frac{||\eta |^{2}-2v\cdot \eta |^{2}}{%
				|\eta |^{2}}-\tilde{\theta} \{|v-\eta |^{2}-|v|^{2}\} \\
			&=&-2 \varrho |\eta |^{2}+ 4 \varrho v\cdot \eta - 4 \varrho\frac{|v\cdot
				\eta |^{2}}{|\eta |^{2}}-\tilde{\theta} \{|\eta |^{2}-2v\cdot \eta \} \\
			&=&(-2 \varrho-\tilde{\theta}  )|\eta |^{2}+(4 \varrho+2\tilde{\theta} )v\cdot \eta -%
			4 \varrho\frac{\{v\cdot \eta \}^{2}}{|\eta |^{2}}.
		\end{eqnarray*}%
If $0<\tilde{\theta} <2 \varrho$ then the discriminant of the above quadratic form of
		$|\eta |$ and $\frac{v\cdot \eta }{|\eta |}$ is
		\begin{equation*}
		(4 \varrho+2\tilde{\theta} )^{2}-4
		(-2 \varrho-\tilde{\theta}  )(-%
		4 \varrho)
		=4\tilde{\theta} ^{2}- 16 \varrho^2<0.
		\end{equation*}%
		Hence, the quadratic form is negative definite. We thus have, for $%
		0<\tilde{\varrho}< \varrho - \frac{\tilde{\theta}}{2}  $, the following perturbed quadratic form is still negative definite: $
		-(\varrho - \tilde{\varrho})|\eta |^{2}-(\varrho - \tilde{\varrho})\frac{||\eta
			|^{2}-2v\cdot \eta |^{2}}{|\eta |^{2}}-\tilde{\theta} \{|\eta |^{2}-2v\cdot \eta \}  \leq 0,$ i.e, 
\begin{align*}
- {\varrho} |v-u|^{2}
			-  {\varrho} \frac{ ||v|^2-|u|^2 |^2}{|v-u|^2} + \tilde{\theta} |v|^2 - \tilde{\theta} |u|^2     \leq -\tilde{\rho}|v-u|^2,  
\end{align*}
we conclude \eqref{k_theta}.

\textit{Proof of \eqref{nabla_k_theta}.} Taking the derivative to \eqref{k1} and \eqref{k2} we have
\begin{align*}
\nabla_v \mathbf{k}_1(v,u)    & = \frac{v-u}{| v-u|}\mathbf{k}_1(v,u) - v \mathbf{k}_1(v,u),
\end{align*}
\begin{align*}
\nabla_v \mathbf{k}_2(v,u)    &  = \frac{v-u}{| v-u|^2} \mathbf{k}_2(v,u) \\
&- \mathbf{k}_2(v,u)\Big[\frac{v-u}{4} + \frac{v(| u|^2-| v|^2)| v-u|^2 - (| u|^2-| v|^2)^2  (v-u) }{4| v-u|^4} \Big] .
\end{align*}
Since $|u|^2 - |v|^2    = |u-v|^2 + 2v\cdot (u-v) $, we have
\begin{align*}
   & \big| |u|^2 - |v|^2\big| \lesssim |u-v|^2 + |v||u-v| , \\
   & \big| |u|^2 - |v|^2\big|^2 \lesssim |u-v|^4 + |v|^2 |u-v|^2.
\end{align*}
This leads to
\begin{align*}
\Big|\nabla_v \mathbf{k}(v,u) \frac{e^{\tilde{\theta}| v|^2}}{e^{\tilde{\theta}| u|^2}} \Big|    &\lesssim  \big[\frac{1+|u|^2+|v|^2}{| v-u|}+| v-u|+\langle v\rangle\big] [\mathbf{k}_1(v,u)+\mathbf{k}_2(v,u) ]   \frac{e^{\tilde{\theta}| v|^2}}{e^{\tilde{\theta}| u|^2}}  \\
    &\lesssim \big[\frac{1 +|v|^2}{| v-u|}+| v-u|+\langle v\rangle\big] \mathbf{k}_{\tilde{\varrho}}(v,u) \\
    & =\big[\frac{1 +|v|^2}{| v-u|}+| v-u|+\langle v\rangle\big] \frac{e^{-\tilde{\varrho}|v-u|^2}}{|v-u|} \lesssim \frac{1+|v|^2}{|v-u|} \frac{e^{-c|v-u|^2}}{|v-u|}
\end{align*}
for some $c<\tilde{\varrho}$. In the second line we have applied \eqref{k_theta}. In the last line we used that for $c<\tilde{\varrho}$, we have
\begin{align*}
    &  e^{-\tilde{\varrho}|v-u|^2} \lesssim \frac{e^{-c|v-u|^2}}{|v-u|}, \ e^{-\tilde{\varrho}|v-u|^2} \lesssim \frac{e^{-c|v-u|^2}}{|v-u|^2}.
\end{align*}
Since $c<\tilde{\varrho}<\varrho-\frac{\tilde{\theta}}{2}$, for ease of notation, we conclude \eqref{nabla_k_theta} with coefficient $\tilde{\varrho}$.

\end{proof}

\begin{lemma}\label{lemma:k_gamma}
With $\tilde{\theta}\ll \theta$, $K(f)$ is bounded as
\begin{equation}\label{Kf_bdd}
\begin{split}
\Vert w_{\tilde{\theta}}(v)K(f)(t,x,v)\Vert_\infty    &  \lesssim e^{-\lambda t}\sup_t e^{\lambda t}\Vert wf(t)\Vert_\infty,\\
  | w_{\tilde{\theta}}(v)K(f)(t,x,v)|_\infty  &\lesssim e^{-\lambda t} \sup_t e^{\lambda t}| wf(t)|_\infty.
\end{split}
\end{equation}

Moreover, for $\nu$ and $\Gamma$ in~\eqref{f_equation}, we have
{\color{black}
\begin{equation}\label{Gamma bounded}
\begin{split}
 \Big\Vert \frac{w}{\langle v\rangle}\Gamma(f,g)(t,x,v) \Big\Vert_\infty    & \lesssim \Vert wf(t)\Vert_\infty\times \Vert wg(t)\Vert_\infty,\\
  \Big| \frac{w}{\langle v\rangle}\Gamma(f,g)(t,x,v) \Big|_\infty    &\lesssim  | wf(t)|_\infty\times  |wg(t)|_\infty,
\end{split}
\end{equation}
}

\begin{equation}\label{nablav nu}
\nu \gtrsim \sqrt{|v|^2+1}, \ \ |\nabla_v \nu| \lesssim 1.
\end{equation}

{\color{black}
\begin{equation}\label{Gamma_est}
\begin{split}
 | \nabla_x \Gamma(f,g)(t,x,v) | &\lesssim \langle v\rangle \| wf(t) \|_\infty
  | \nabla_x g(t,x,v) |  + \Vert wf(t)\Vert_\infty
  \int_{\R^3} \mathbf{k}(v,u)
 |\nabla_x g(t,x,u)|
  \dd u  \\
  & + \Vert wg\Vert_\infty \int_{\mathbb{R}^3} \mathbf{k}(v,u) |\nabla_x f(t,x,u)| \dd u.
\end{split}
\end{equation}
}

\end{lemma}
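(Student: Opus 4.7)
The plan is to derive all four estimates from the classical Grad splitting of the Boltzmann collision operator together with the pointwise kernel bounds \eqref{k_theta}--\eqref{nabla_k_theta} just established. For \eqref{Kf_bdd} I would substitute the representation $K(f)(x,v)=\int\mathbf{k}(v,u)f(x,u)\dd u$ and factor the weight as
\[
w_{\tilde\theta}(v)|K(f)(t,x,v)|\le \int_{\R^3}\mathbf{k}(v,u)\,\frac{e^{\tilde\theta|v|^2}}{e^{\tilde\theta|u|^2}}\,e^{(\tilde\theta-\theta)|u|^2}\,w(u)\,|f(t,x,u)|\dd u.
\]
Applying \eqref{k_theta}---legal because $\tilde\theta\ll\theta<1/4$---replaces the ratio of exponentials by $\mathbf{k}_{\tilde\varrho}(v,u)=e^{-\tilde\varrho|v-u|^2}/|v-u|$, whose integral against the rapidly decaying factor $e^{(\tilde\theta-\theta)|u|^2}$ is uniformly bounded in $v$. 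This yields $\|w_{\tilde\theta}K(f)(t)\|_\infty\lesssim\|wf(t)\|_\infty$, and inserting the trivial identity $1=e^{-\lambda t}e^{\lambda t}$ on the right produces \eqref{Kf_bdd}; the boundary version is identical after restricting $x\in\p\O$.

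For \eqref{nablav nu} a direct computation gives $\nu(v)=c\int_{\R^3}|v-u|\mu(u)\dd u$, which is elementarily equivalent to $\langle v\rangle$, and differentiating under the integral (the singularity at $v=u$ being integrable in $\R^3$) controls $|\nabla_v\nu|$ uniformly. For \eqref{Gamma bounded}, I would exploit the energy-conservation identity $\sqrt{\mu(u')}\sqrt{\mu(v')}=\sqrt{\mu(u)}\sqrt{\mu(v)}$ to rewrite
\[
\Gamma(f,g)(v)=\int_{\R^3\times\S^2}|(v-u)\cdot\omega|\sqrt{\mu(u)}\big[f(u')g(v')-f(u)g(v)\big]\dd\omega\dd u.
\]
The loss part is pointwise $|g(v)|\cdot\nu_f(v)$ with $\nu_f(v)\lesssim\langle v\rangle\|wf\|_\infty$, and a standard Carleman-type change of variables controls the gain part; multiplying by $w$ and dividing by $\langle v\rangle$ absorbs the $\langle v\rangle$ factor and gives both inequalities in \eqref{Gamma bounded}.

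For \eqref{Gamma_est} the decisive input is bilinearity: since the coefficients of $\Gamma$ do not depend on $x$,
\[
\nabla_x\Gamma(f,g)=\Gamma(\nabla_x f,g)+\Gamma(f,\nabla_x g).
\]
The loss contribution of $\Gamma(f,\nabla_x g)$ equals $\nabla_x g(v)\int|v-u|\sqrt{\mu(u)}f(u)\dd u$ and is bounded by $\langle v\rangle\|wf\|_\infty|\nabla_x g(v)|$, producing the first term of \eqref{Gamma_est}. The remaining pieces---the gain of $\Gamma(f,\nabla_x g)$ and both parts of $\Gamma(\nabla_x f,g)$---are recast via Grad's change of variables as convolutions against a kernel dominated by $\mathbf{k}(v,u)$, giving the two convolution terms with prefactors $\|wf\|_\infty$ and $\|wg\|_\infty$ respectively. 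The main bookkeeping task, and the only real place where care is needed, is tracking the Maxwellian weights through the post-collisional change of variables so that what emerges is indeed the Grad kernel $\mathbf{k}$ from Lemma~\ref{Lemma: k tilde}; the gap $\theta<1/4$ supplies the slack needed for every $u$-integral to converge.
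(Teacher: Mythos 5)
Your proposal matches the paper's proof in all essentials: \eqref{Kf_bdd} via transferring the weight through $\mathbf{k}$ using \eqref{k_theta}, \eqref{Gamma bounded} via the energy identity $|u'|^2+|v'|^2=|u|^2+|v|^2$ together with the $\langle v\rangle$-growth of the collision frequency, \eqref{nablav nu} by direct computation, and \eqref{Gamma_est} via $\nabla_x\Gamma(f,g)=\Gamma(\nabla_x f,g)+\Gamma(f,\nabla_x g)$ followed by identifying the frozen-coefficient $\Gamma$ with an $L$-type operator whose Grad split yields the local term and the two $\mathbf{k}$-convolutions. The only minor deviation is in the gain term of \eqref{Gamma bounded}, where you invoke a Carleman change of variables; the paper bypasses this by applying the pointwise identity $w^{-1}(u')w^{-1}(v')=w^{-1}(u)w^{-1}(v)$ (a direct consequence of the energy conservation you already record), which reduces the gain integral to exactly the same $\langle v\rangle$-bounded quantity as the loss and requires no change of variables at all.
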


\begin{proof}

\textit{Proof of~\eqref{Kf_bdd}.}
By the property of $K(f)$ in \eqref{k1} and \eqref{k2}, we apply Lemma \ref{Lemma: k tilde} to have
{\color{black}\begin{align*}
    & |w_{\tilde{\theta}}(v) K(f)(t,x,v)| =\Big|\int_{\mathbb{R}^3} \mathbf{k}(v,u)\frac{w_{\tilde{\theta}}(v)}{w_{\tilde{\theta}}(u)} w_{\tilde{\theta}}(u) f(t,x,u)  \dd u  \Big| \\
    & \lesssim \Vert w_{\tilde{\theta}}f(t)\Vert_\infty \Big| \int_{\mathbb{R}^3} \mathbf{k}_{\tilde{\varrho}}(v,u) \dd u \Big| \leq \Vert wf(t)\Vert_\infty.
\end{align*}}

\textit{Proof of~\eqref{Gamma bounded}.}
{\color{black}By the definition of $\Gamma(f,g)$ in~\eqref{Gamma_f}, we have
\begin{align*}
  &|\Gamma(f,g)(t,x,v)|  \\
 & \lesssim  \iint_{\mathbb{R}^3\times \mathbb{S}^2}|(v-u)\cdot \omega|\sqrt{\mu(u)}[|f(t,x,v')||g(t,x,u')|+|f(t,x,v)||g(t,x,u)|] \\
  & \lesssim w^{-1}(v)\Vert wf(t)\Vert_\infty \Vert wg(t)\Vert_\infty \iint_{\mathbb{R}^3\times \mathbb{S}^2} |(v-u)\cdot \omega|\sqrt{\mu(u)}w^{-1}(u) \dd u\\
  &\lesssim w^{-1}(v) \langle v\rangle\Vert wf(t)\Vert_\infty \Vert wg(t)\Vert_\infty.
\end{align*}
Here we have used $|u'|^2 + |v'|^2 = |u|^2 + |v|^2$.}

\textit{Proof of \eqref{nablav nu}.} The proof is standard, we refer to \cite{G}.

\textit{Proof of \eqref{Gamma_est}.}  {\color{black} Taking the derivative we have
\begin{align*}
   |\nabla_x \Gamma(f,g)| & = |\Gamma(\nabla_x f,g) + \Gamma(f,\nabla_x g) |  \notag \\
   &\leq \Vert wf(t)\Vert_\infty \Big|\Gamma(e^{-\theta |\cdot|^2},\nabla_x g) \Big| + \Vert wg(t)\Vert_\infty\Big|\Gamma(\nabla_x f, e^{-\theta |\cdot |^2})  \Big|.
\end{align*}
The $\Gamma$ terms above share the same form as the linear operator $L$ in \eqref{Gamma}. Then \eqref{Gamma_est} follows the expression of $\nu f$ and $Kf$ in \eqref{Kf}, with replacing $\mu$ by a different exponent $\sqrt{\mu}e^{-\theta |v|^2}$. For ease of notation, we keep the same $\mathbf{k}(v,u)$ in \eqref{Gamma_est}.
}

\end{proof}

The properties of the kinetic weight $\alpha$ are summarized in the following two lemmas.
\begin{lemma}\label{lemma:velocity}
{\color{black}For a $C^3$ convex domain}, the kinetic weight $\alpha,\tilde{\alpha}$ in~\eqref{kinetic_distance} is almost invariant along the characteristic:
\begin{align}
\alpha(x,v)\thicksim \alpha(x-sv,v), \ \tilde{\alpha}(x,v)\thicksim \tilde{\alpha}(x-sv,v)
\ \text{as long as } x-sv \in \bar{\O}
. \label{Velocity_lemma}
\end{align}
Here we recall the notation $\thicksim$ is defined in~\eqref{thicksim}.

From the definition of $\tilde{\alpha}$ in~\eqref{kinetic_distance}, we have an upper bound for $\alpha(x,v)$ as
\begin{equation}\label{n geq alpha}
\alpha(x,v)\leq \min\{1,\tilde{\alpha}(x,v)\}, \ \alpha(x,v)\lesssim |v|.
\end{equation}
Thus 
\begin{equation}\label{bdr_alpha}
 \tilde{\alpha}(x,v)\sim |n(\xb(x,v))\cdot v|, \ \ \frac{1}{|n(\xb(x,v))\cdot v|}\lesssim \frac{1}{\alpha(x,v)}   .
\end{equation}

\end{lemma}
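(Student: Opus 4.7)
The plan is to prove the almost-invariance $\tilde{\alpha}(x-sv,v)\thicksim\tilde{\alpha}(x,v)$ by a Gr\"onwall argument applied to $g(s):=\tilde{\alpha}^2(x-sv,v)$. Using $\tfrac{d}{ds}\xi(x-sv)=-v\cdot\nabla\xi(x-sv)$ and $\tfrac{d}{ds}\big(v\cdot\nabla\xi(x-sv)\big)=-v\cdot\nabla^2\xi(x-sv)\cdot v$, a direct differentiation reveals that the two contributions of the form $(v\cdot\nabla\xi)(v\cdot\nabla^2\xi\cdot v)$ cancel exactly, leaving
\[
g'(s)\;=\;2\,\xi(x-sv)\sum_{i,j,k=1}^{3}v_iv_jv_k\,\p_i\p_j\p_k\xi(x-sv),
\]
which is controlled by $C|\xi(x-sv)|\,|v|^3$ thanks to the $C^3$-regularity of $\xi$. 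This precise cancellation is the very reason $\tilde\alpha$ is defined with those two terms, and it is the key algebraic step.

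Next, the strict convexity \eqref{convex} gives $v\cdot\nabla^2\xi\cdot v\gtrsim|v|^2$, and since $\xi\leq 0$ on $\bar{\O}$ the second summand in $\tilde{\alpha}^2$ satisfies $-2\xi(x-sv)(v\cdot\nabla^2\xi(x-sv)\cdot v)\gtrsim|\xi(x-sv)|\,|v|^2$; hence $|\xi(x-sv)|\,|v|^2\lesssim g(s)$. Combining with the previous display, $|g'(s)|\lesssim|v|\,g(s)$. Since $x,\,x-sv\in\bar{\O}$ forces $s|v|\leq\operatorname{diam}(\O)$, Gr\"onwall's inequality yields $g(s)\thicksim g(0)$ with constants independent of $v$, proving $\tilde\alpha(x-sv,v)\thicksim\tilde\alpha(x,v)$. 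Because $\chi$ in \eqref{chi} is non-decreasing, $1$-Lipschitz, satisfies $\chi(s)=s$ near $0$ and is capped at $1$, this comparability transfers to $\alpha=\chi(\tilde\alpha)$.

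The remaining bounds are essentially direct consequences. For \eqref{n geq alpha}: the inequalities $\alpha\leq 1$ and $\alpha\leq\tilde\alpha$ follow from $\chi\leq 1$ and $\chi(s)\leq s$ (the latter by integrating $|\chi'|\leq 1$ from $\chi(0)=0$), while $\alpha\lesssim|v|$ follows from $\tilde\alpha^2\leq|v\cdot\nabla\xi|^2+2|\xi|\,|v|^2\|\nabla^2\xi\|_\infty\lesssim|v|^2$ on $\bar\O$. For \eqref{bdr_alpha}, at the exit point one has $\xi(\xb)=0$, so $\tilde{\alpha}(\xb,v)=|v\cdot\nabla\xi(\xb)|\sim|n(\xb)\cdot v|$ by \eqref{normal} and the fact that $|\nabla\xi|\neq 0$ on $\p\O$; applying the invariance with $s=\tb$ yields $\tilde\alpha(x,v)\sim|n(\xb)\cdot v|$, and $\alpha\leq\tilde\alpha$ then gives $1/|n(\xb)\cdot v|\lesssim 1/\alpha(x,v)$.

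The main obstacle is spotting and verifying the precise cancellation in $g'(s)$; once that is done, the convexity-based lower bound $|\xi(x-sv)|\,|v|^2\lesssim g(s)$ is what converts the a priori cubic-in-$|v|$ estimate into the linear-in-$|v|$ differential inequality whose Gr\"onwall constant is rendered $v$-uniform by the geometric bound $s|v|\leq\operatorname{diam}(\O)$.
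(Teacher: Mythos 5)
Your proof is correct and relies on the same core mechanism as the paper's: the exact cancellation of the $(v\cdot\nabla\xi)(v\cdot\nabla^{2}\xi\cdot v)$ terms when differentiating $\tilde\alpha$ along $x-sv$, the convexity lower bound $|\xi|\,|v|^{2}\lesssim\tilde\alpha^{2}$, and Gr\"onwall with the geometric bound $s|v|\leq\operatorname{diam}(\O)$. The only cosmetic differences are that you apply Gr\"onwall to $g=\tilde\alpha^{2}$ (avoiding division by $\tilde\alpha$) and then transfer to $\alpha$ using $\chi(s)\thicksim\min(s,1)$, whereas the paper runs Gr\"onwall on $\alpha$ directly after establishing $s\chi'(s)\leq4\chi(s)$; both routes are equivalent in substance.
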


\begin{proof}
\eqref{n geq alpha} follows from the definition of $\chi$ in~\eqref{chi}. 

Since $\xb(x,v)\in \p\O$, we have $|n(\xb(x,v))\cdot v|\sim \tilde{\alpha}(\xb(x,v),v)$. By~\eqref{Velocity_lemma}, we further have $|n(\xb(x,v))\cdot v|\sim \tilde{\alpha}(x,v) \geq \alpha(x,v)$, then we conclude~\eqref{bdr_alpha}.

Then we focus on~\eqref{Velocity_lemma}.

First we prove that $\chi(s)$ has the following property:
\begin{equation}\label{chi_property}
 s\chi'(s)\leq 4\chi(s).   
\end{equation}
From the definition in~\eqref{chi}, when $s\geq 2$, $s\chi'(s)=0\leq 4\chi(s)$. When $s\leq \frac{1}{2}$, we have $\chi(s)=s,$ and thus $s\chi'(s)=s \leq 4\chi(s)=4s$. When $\frac{1}{2}<s<2$, since $\chi'(s)\leq 1$, we have $s\chi'(s)\leq s<2=4\chi(1/2)\leq 4\chi(s)$. Then we conclude~\eqref{chi_property}.

From the definition of $\tilde{\alpha}(x,v)$ in~\eqref{kinetic_distance}, we directly compute 
\begin{align}
  & |v\cdot \nabla_x \alpha(x,v)| = \chi'(\tilde{\alpha}(x,v))|v\cdot \nabla_x \tilde{\alpha}(x,v)| \notag\\
   &=\chi'(\tilde{\alpha}(x,v)) \frac{|2v\cdot \nabla \xi(x)[v\cdot \nabla^2 \xi \cdot v] - 2v\cdot \nabla \xi(x)[v\cdot \nabla^2 \xi \cdot v]- 2v \{ v\cdot \nabla^3 \xi \cdot v \} \xi(x)| }{\tilde{\alpha}(x,v)}  \notag\\
   & = 2\chi'(\tilde{\alpha}(x,v))\frac{|v\{v\cdot \nabla^3 \xi \cdot v\}\xi(x)|}{\tilde{\alpha}(x,v)} \lesssim \chi'(\tilde{\alpha}(x,v)) \frac{|v|^3|\xi(x)|}{\tilde{\alpha}(x,v)} \notag\\
   &\lesssim \chi'(\tilde{\alpha}(x,v))\frac{|v|(\tilde{\alpha}(x,v))^2}{\tilde{\alpha}(x,v)} = |v|\chi'(\tilde{\alpha}(x,v))\tilde{\alpha}(x,v)\lesssim |v|\chi(\tilde{\alpha}(x,v))=|v|\alpha(x,v).\label{computation_alpha}
\end{align}
In the last line, we have used the convexity~\eqref{convex} in the first inequality, and used~\eqref{chi_property} in the second inequality. Then for some $C$, we have
\[-C |v|\alpha(x,v)\leq v\cdot \nabla_x \alpha(x,v) \leq C |v|\alpha(x,v).\]
Since
\[\frac{\dd }{\dd s} \alpha(x-sv,v)=-v\cdot \nabla_x \alpha(x-sv,v),\]
by Gronwall's inequality, we conclude
\[e^{-C|v|s}\alpha(x-sv,v)\leq \alpha(x,v)\leq e^{C|v|s}\alpha(x-sv,v).\]
Since $x-sv\in \bar{\O}$, we have $|v|s\leq C_{\Omega}$ and conclude $\alpha(x,v)\sim \alpha(x-sv,v)$.

Following the same computation in~\eqref{computation_alpha} with replacing $\chi'$ by $1$, for $\tilde{\alpha}(x,v)$ we have
\begin{align*}
|v\cdot \tilde{\alpha}(x,v)|\lesssim |v|\tilde{\alpha}(x,v).
\end{align*}
Again by Gronwall's inequality, we can conclude $\tilde{\alpha}(x,v)\sim \tilde{\alpha}(x-sv,v)$.

We conclude~\eqref{Velocity_lemma}.
\end{proof}

The following nonlocal-to-local estimate for the kinetic weight~\eqref{kinetic_distance} is essential in the analysis of the regularity.
\begin{lemma}\label{lemma:nonlocal_to_local}
Recall the definition of $\alpha$ in~\eqref{kinetic_distance}, then for $C>0$ and $\varrho>0$, we have
\begin{equation}\label{est:nonlocal_wo_e}
\begin{split}
  \int^{t}_{t- \tb(x,v)} \int_{\mathbb{R}^{3}}\frac{e^{-C\langle v\rangle (t-s)} e^{-\varrho |v-u|^{2}}}{|v-u| \alpha (  x-(t-s)v, {u}) }\mathrm{d}u\mathrm{d}s
  &
 \lesssim \frac{1}{\alpha(x,v)}.
\end{split}
\end{equation}
For $\e,\delta\ll 1$, recall the notation $o(1)$ in~\eqref{ll},
\begin{equation}\label{est:nonlocal_with_e}
\begin{split}
  \int^{t}_{t- \e} \int_{\mathbb{R}^{3}}\frac{e^{-C\langle v\rangle (t-s)} e^{-\varrho |v-u|^{2}}}{|v-u| \alpha (  x-(t-s)v, {u}) }\mathrm{d}u\mathrm{d}s
  &
 \lesssim \frac{o(1)}{\alpha(x,v)},
\end{split}
\end{equation}

{\color{black}
\begin{align}
  \int^{t}_{t-\tb(x,v)} \int_{|u|<\delta}\frac{e^{-C\langle v\rangle (t-s)} e^{-\varrho |v-u|^{2}}}{|v-u| \alpha (  x-(t-s)v, {u}) }\mathrm{d}u\mathrm{d}s
  &
 \lesssim \frac{o(1)}{\alpha(x,v)}.   \label{est:nonlocal_delta}
\end{align}
}

\end{lemma}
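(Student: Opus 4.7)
The three estimates share the same singular integrand and differ only by the domain of integration, so my plan is to first establish the master bound \eqref{est:nonlocal_wo_e} and then deduce \eqref{est:nonlocal_with_e} and \eqref{est:nonlocal_delta} by extracting the smallness factor from the restricted domain. Throughout, I change variables $\tau = t-s \in [0,\tb(x,v)]$ and write $y(\tau) := x - \tau v$, so the left-hand side of \eqref{est:nonlocal_wo_e} becomes $\int_0^{\tb} e^{-C\langle v\rangle \tau} \int_{\R^3} \frac{e^{-\varrho|v-u|^2}}{|v-u|\alpha(y(\tau),u)} \dd u \dd\tau$.

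The heart of the argument reduces to the pointwise inner estimate
\[
J(y) := \int_{\R^3} \frac{e^{-\varrho |v-u|^2}}{|v-u|\, \alpha(y,u)}\, \dd u \; \lesssim \; \frac{1}{\alpha(y,v)} \qquad \text{uniformly in } y \in \bar{\O}.
\]
Once this is in hand, Lemma \ref{lemma:velocity} gives $\alpha(y(\tau),v) \sim \alpha(x,v)$, and the temporal factor yields $\int_0^\tb e^{-C\langle v\rangle \tau} \dd \tau \leq \frac{1}{C\langle v\rangle}$, which combine to produce \eqref{est:nonlocal_wo_e}. To prove $J(y) \lesssim 1/\alpha(y,v)$, I substitute $u = v+w$ and split by $|w|$. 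For $|w| \leq \frac{1}{2}\langle v \rangle$, I use the Lipschitz-type control $|\tilde\alpha(y,v+w) - \tilde\alpha(y,v)| \lesssim |w|\langle v\rangle$ read off from the explicit formula \eqref{kinetic_distance} together with the strict convexity bound $\tilde\alpha(y,u)^2 \gtrsim (u\cdot\nabla\xi(y))^2 + |\xi(y)||u|^2$ (obtained directly from \eqref{convex} and $\xi\leq 0$ on $\bar\O$) to conclude $\alpha(y,v+w) \gtrsim \alpha(y,v)$ in this regime; the corresponding contribution is at most $\frac{1}{\alpha(y,v)}\int_{\R^3} \frac{e^{-\varrho|w|^2}}{|w|}\dd w \lesssim \frac{1}{\alpha(y,v)}$. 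For $|w| > \frac{1}{2}\langle v\rangle$, the Gaussian $e^{-\varrho|w|^2}$ contributes a Schwartz-smallness factor $e^{-\varrho\langle v\rangle^2/8}$ that beats the singularity $\alpha(y,u)^{-1}$, which remains locally integrable in $u$ by the convexity-based lower bound above.

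The auxiliary estimates are corollaries of the master bound with explicit extraction of $o(1)$. For \eqref{est:nonlocal_with_e}, the identical chain of reasoning applies with $\tau$ restricted to $[0,\e]$, so the temporal integral satisfies $\int_0^\e e^{-C\langle v\rangle \tau}\dd \tau \leq \e$, producing the $o(1)$ factor directly as $\e \to 0$. For \eqref{est:nonlocal_delta}, I split according to $|v|$. When $|v| > 2\delta$, the constraint $|u|<\delta$ forces $|v-u| \geq |v|/2$, so the Gaussian gives Schwartz decay $e^{-\varrho|v|^2/8}$, which together with the $O(\delta^2)$ smallness coming from $\int_{|u|<\delta} \frac{\dd u}{|v-u||u|} \lesssim \delta$ in $\R^3$ yields the $o(1)/\alpha(x,v)$ bound. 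When $|v| \leq 2\delta$, we use $\alpha(x,v)\lesssim |v|\leq 2\delta$ from \eqref{n geq alpha}, so a direct estimate of the LHS (dropping the Gaussian and bounding $\alpha(y,u)^{-1}$ by the convexity-based lower bound) already produces a factor of $\delta$ whose combination with $1/\alpha(x,v)\gtrsim 1/\delta$ yields $o(1)/\alpha(x,v)$.

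The main obstacle is proving the inner estimate $J(y) \lesssim 1/\alpha(y,v)$ in the delicate near-grazing regime where $y$ approaches $\partial \O$ and $v$ is almost tangential to the level sets of $\xi$: in that situation both sides blow up and their ratio must be controlled uniformly. The key tool is the interplay between the two lower bounds on $\tilde\alpha$ from strict convexity — namely $\tilde\alpha(y,u) \gtrsim |u\cdot\nabla\xi(y)|$ and $\tilde\alpha(y,u)\gtrsim |u|\sqrt{|\xi(y)|}$ — which must be applied to different subregions of the $u$-space in tandem with the Gaussian weight, without losing the crucial dependence on $\alpha(y,v)$. The other two estimates are then almost automatic consequences once the master bound is established.
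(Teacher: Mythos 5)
Your proposed "master inner bound" $J(y) := \int_{\R^3} \frac{e^{-\varrho|v-u|^2}}{|v-u|\,\alpha(y,u)}\,\dd u \lesssim \frac{1}{\alpha(y,v)}$ is false, and this sinks the whole argument. Take $y$ close to $\p\O$ with $|\xi(y)| = \epsilon^2$ and take $v = n(y)$ (unit normal direction). Then $v\cdot \nabla\xi(y) \sim 1$, so $\tilde\alpha(y,v)\sim 1$ and hence $\alpha(y,v)\sim 1$, so the claimed right-hand side is $O(1)$. But the left-hand side is not: splitting $u = (u_n, u_\parallel)$ with $u_n$ the normal component, $\alpha(y,u)^2 \sim u_n^2 + \epsilon^2 |u|^2$, and the region $u_n \approx 0$, $|u_\parallel| = r \sim 1$ contributes
\[
\int_{|u_n|<\frac12}\int_0^\infty \frac{e^{-\varrho(1+r^2)}}{\sqrt{1+r^2}\,\sqrt{u_n^2+\epsilon^2 r^2}}\,2\pi r\,\dd r\,\dd u_n \sim \int_0^\infty \frac{r\,e^{-\varrho(1+r^2)}}{\sqrt{1+r^2}}\ln\frac{1}{\epsilon r}\,\dd r \sim \ln\frac{1}{\epsilon},
\]
which diverges as $\epsilon\to 0$ while $1/\alpha(y,v)$ stays bounded. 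This is exactly why the paper's Step~1 only proves the weaker logarithmic bound $J(y)\lesssim 1+|\ln|\xi(y)||+|\ln|v||$ (inequality \eqref{eqn: int alpha du}) rather than a pointwise bound in terms of $1/\alpha(y,v)$.

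Your mechanism for the (false) bound is also broken: from the Lipschitz estimate $|\tilde\alpha(y,v+w)-\tilde\alpha(y,v)|\lesssim |w|\langle v\rangle$ you cannot conclude $\alpha(y,v+w)\gtrsim\alpha(y,v)$ on $|w|\leq\frac12\langle v\rangle$ --- the Lipschitz bound only gives $\tilde\alpha(y,v+w)\geq\tilde\alpha(y,v)-C|w|\langle v\rangle$, which is vacuous as soon as $|w|\langle v\rangle\gtrsim\tilde\alpha(y,v)$, and in the near-grazing regime $\tilde\alpha(y,v)$ is small while $|w|$ ranges up to $\frac12\langle v\rangle$. In fact $\alpha(y,u)$ \emph{does} become much smaller than $\alpha(y,v)$ when $u$ crosses the tangent plane. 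What the paper does instead, and what you are missing, is that the key saving comes from the \emph{time} integration, not the velocity integration: in Steps~2--3 the paper changes variables $s\mapsto -\xi(x-(t-s)v)$ along the characteristic, exploits the precise relations \eqref{vpxxi_alpha_1}--\eqref{vpxxi_alpha_2} between $|v\cdot\nabla\xi(x-(t-s)v)|$, $\tilde\alpha(x,v)$ and $|v|\sqrt{-\xi}$ over the three sub-intervals of $[t-\tb,t]$, and applies H\"older to tame the $\ln|\xi|$. The change of variables in $s$ is unavoidable; there is no $s$-independent inner bound of the type you want. Since all three estimates \eqref{est:nonlocal_wo_e}--\eqref{est:nonlocal_delta} in your plan hinge on the false inner bound, they would need to be reproved along the paper's lines.
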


\begin{proof}
We prove the following estimate. When $t-\tb(x,v)\leq t-t_1 \leq t-t_2 \leq t$, for $0<\beta<1$ and some $C_1>0$,
\begin{align}
&\int^{t-t_1}_{t-t_2} \int_{\mathbb{R}^{3}}\frac{e^{-C\langle v\rangle (t-s)} e^{-\varrho | v-u| ^{2}}}{| v-u|  \alpha (  x-(t-s)v, {u}) }\mathrm{d}u\mathrm{d}s  \lesssim \frac{| e^{-C_1\langle v\rangle t_1 }-e^{-C_1\langle v\rangle t_2}| ^{\beta}}{\alpha(x,v)}, \label{t_1_t_2}\\
&\int^{t-t_1}_{t-t_2} \int_{|u|<\delta}\frac{e^{-C\langle v\rangle (t-s)} e^{-\varrho | v-u| ^{2}}}{| v-u|  \alpha (  x-(t-s)v, {u}) }\mathrm{d}u\mathrm{d}s  \lesssim o(1)\frac{| e^{-C_1\langle v\rangle t_1 }-e^{-C_1\langle v\rangle t_2}| ^{\beta}}{\alpha(x,v)},    \label{t_1_t_2_delta}
\end{align}
Clearly Lemma \ref{lemma:nonlocal_to_local} follows from~\eqref{t_1_t_2} and \eqref{t_1_t_2_delta}.

In the proof we assume $\alpha(x-(t-s)v,u)=\tilde{\alpha}(x-(t-s)v,u)$. For the other case, from~\eqref{kinetic_distance} we have $\alpha(x-(t-s)v,u)\gtrsim 1$, combining with $\frac{e^{-\varrho|v-u|^2}}{|v-u|}\in L^1_u$, we have
\begin{align*}
    &   \int^{t-t_1}_{t-t_2} \int_{\mathbb{R}^{3}}\mathbf{1}_{\alpha(x-(t-s)v,u)\gtrsim 1}\frac{e^{-C\langle v\rangle (t-s)} e^{-\varrho |v-u|^{2}}}{|v-u| \alpha (  x-(t-s)v, {u}) }\mathrm{d}u\mathrm{d}s \\
    &\lesssim   \int^{t-t_1}_{t-t_2} \int_{\mathbb{R}^{3}}\frac{e^{-C\langle v\rangle (t-s)} e^{-\varrho |v-u|^{2}}}{|v-u| }\mathrm{d}u\mathrm{d}s  \\
    &\lesssim \int^{t-t_1}_{t-t_2} e^{-C\langle v\rangle(t-s)} \dd s \lesssim |e^{-C\langle v\rangle t_1}-e^{-C\langle v\rangle t_2}|\lesssim \frac{|e^{-C\langle v\rangle t_1}-e^{-C\langle v\rangle t_2}|}{\alpha(x,v)}.
\end{align*}
In the last inequality, we used~\eqref{n geq alpha}. Then we start to prove~\eqref{t_1_t_2}.

\textit{Step 1. }We claim that for $y\in \bar{\Omega}$ and $\varrho>0$,
\begin{equation}\label{eqn: int alpha du}
\int_{\mathbb{R}^3}\frac{e^{-\varrho| v-u|^2}}{| v-u|   }
\frac{1}{ \alpha(y,u)}
\dd u \lesssim 1+ | \ln
  | \xi(y)|    |   +  | \ln
  | v|    |  .
\end{equation}
\begin{equation}\label{int_alpha_du_delta}
\int_{|u|<\delta}\frac{e^{-\varrho| v-u|^2}}{| v-u|   }
\frac{1}{ \alpha(y,u)}
\dd u \lesssim o(1)[1+ | \ln
  | \xi(y)|    |   +  | \ln
  | v|    |  ].
\end{equation}

Recall (\ref{bar_v}), we set $\mathbf{v}= \mathbf{v} (y)$ and $\mathbf{u}= \mathbf{u} (y)$. For $| u| \gtrsim {| v| } $, we have
\Be\label{com:alpha}
\big[ | \mathbf{u}_3 (y)|^2+| \xi(y)| | u|^2 \big]^{1/2}
%\gtrsim\Big[| \mathbf{u}_3 (y)|^2+| \xi(y)| \frac{| v|^2}{4} \Big]^{1/2}
\gtrsim \big[| \mathbf{u}_3 (y)|^2+| \xi(y)| | v|^2 \big]^{1/2}.
\Ee
Thus when $\frac{|v|}{4}\leq |u|\leq 4|v|$, we have
\begin{align} 
 {\int_{\frac{| v| }{4} \leq | u| \leq 4| v|  }}\lesssim & \
\iint
 \frac{e^{-\varrho| \mathbf{v}_\parallel-\mathbf{u}_\parallel|^2}}{| \mathbf{v}_\parallel-\mathbf{u}_\parallel| }\dd \mathbf{u}_\parallel
 \int_0^{ 4| v| }\frac{  \dd \mathbf{u}_3}{\big[ | \mathbf{u}_3|^2+| \xi (y)| | v|^2\big]^{1/2}} \notag \\
 \leq& \  \int_0^{4| v| }\frac{\dd \mathbf{u}_3}{\big[ | \mathbf{u}_3|^2+| \xi (y)| | v|^2\big]^{1/2}}\notag \\
= & \ \ln\Big( \sqrt{| \mathbf{u}_3|^2+| \xi(y)| | v|^2}+| \mathbf{u}_3| \Big)\Big| _{0}^{4| v| } \notag \\
=& \ln (\sqrt{16| v|^2 + | \xi(y)| | v|^2}+ 16 | v|^2)- \ln (\sqrt{  | \xi(y)| | v|^2})\notag \\
\lesssim &  \ln | v|  + \ln | \xi(y)| . \label{int_1/a_1}
\end{align}

If $| u| \geq 4| v| $, then $| u-v|^2 \geq \frac{| v|^2}{4}+\frac{| u|^2}{4}$ and hence the exponent is bounded as\\
$e^{- \varrho | v-u|^2}
\leq e^{- \frac{\varrho}{8} | v|^2}e^{- \frac{\varrho}{8} | u|^2} e^{- \frac{\varrho}{2} | v-u|^2}
$. This, together with (\ref{com:alpha}), implies
\begin{align} 
{\int_{| u| \geq 4| v| }}
\lesssim & \  e^{- \frac{\varrho}{8}| v|^2}
\iint
 \frac{e^{-\frac{\varrho}{2}| \mathbf{v}_\parallel-\mathbf{u}_\parallel|^2}}{| \mathbf{v}_\parallel-\mathbf{u}_\parallel| }\dd \mathbf{u}_\parallel
 \int_0^{\infty}\frac{
 e^{-\frac{\varrho}{8}| \mathbf{u}_3|^2}
 }{\big[ | \mathbf{u}_3|^2+| \xi (y)| | v|^2\big]^{1/2}}   \dd \mathbf{u}_3
\notag \\
 \lesssim & \  e^{- \frac{\varrho}{8}| v|^2}
  \int_0^{\infty}\frac{
 e^{-\frac{\varrho}{8}| \mathbf{u}_3|^2}
 }{\big[ | \mathbf{u}_3|^2+| \xi (y)| | v|^2\big]^{1/2}}   \dd \mathbf{u}_3 \notag \\
 \lesssim & \ e^{- \frac{\varrho}{8}| v|^2} +  e^{- \frac{\varrho}{8}| v|^2}
  \int_0^{1}\frac{\dd \mathbf{u}_3}{\big[ | \mathbf{u}_3|^2+| \xi (y)| | v|^2\big]^{1/2}}  \notag\\
= & \ e^{- \frac{\varrho}{8}| v|^2}+ e^{- \frac{\varrho}{8}| v|^2}\ln\Big( \sqrt{| \mathbf{u}_3|^2+| \xi(y)| | v|^2}+| \mathbf{u}_3| \Big)\Big| _{0}^{1}\notag\\
=& \
e^{- \frac{\varrho}{8}| v|^2} \Big\{1+
\ln (\sqrt{1 + | \xi(y)| | v|^2}
+ 1
)- \ln (\sqrt{  | \xi(y)| | v|^2}
)
\Big\} \notag\\
\leq &  \
e^{- \frac{\varrho}{8}| v|^2}
\{\ln | v|  + \ln | \xi(y)| \} . \label{int_1/a_2}
\end{align}

When $| u| \leq \frac{| v| }{4}$, we have $| v-u| \geq \big| | v| -| u| \big| \geq | v| -\frac{| v| }{4}\geq \frac{| v| }{2}$, this leads
\begin{align*}
{\int_{| u| \leq \frac{| v| }{4}}}
&  \lesssim    \frac{
{\color{black}e^{- \frac{\varrho}{4}| v|^2}}
}{| v| }\int_{| \mathbf{u}_3| +| \mathbf{u}_\parallel| \leq \frac{| v| }{2}} \frac{ \dd\mathbf{u}_3 \dd \mathbf{u}_\parallel }{\Big[| \mathbf{u}_3|^2+| \xi(y)| | \mathbf{u}_\parallel|^2 \Big]^{1/2}}\\
&\lesssim | v| 
{\color{black}e^{- \frac{\varrho}{4}| v|^2}}\int_{| 
\mathbf{\tilde{u}}_3
| \leq \frac{1}{2}}
 \int_{| \mathbf{\tilde{u}}_\parallel| \leq \frac{1}{2}}
 \frac{
 \dd \mathbf{\tilde{u}}_\parallel \dd \mathbf{\tilde{u}}_3
}{\Big[| \mathbf{\tilde{u}}_3|^2+| \xi(y)| | \mathbf{\tilde{u}}_\parallel|^2 \Big]^{1/2}},
\end{align*}
where we have used $| v| \tilde{u} = u$. Using the polar coordinate $\mathbf{\tilde{u}}_{1}=| \mathbf{\tilde{u}}_\parallel| \cos\rho, \mathbf{\tilde{u}}_{2}=| \mathbf{\tilde{u}}_\parallel| \sin \rho$, we proceed the computation as% for the integral over $u_\tau$
\begin{align} 
& {\int_{| u| \leq \frac{| v| }{4}}} \lesssim \  | v| 
{\color{black}e^{- \frac{\varrho}{4}| v|^2}}\int_0^{ \frac{1}{2}} \dd \mathbf{\tilde{u}}_3
\int_{0}^{2\pi}\int_{0}^{\sqrt{1/2}}
% \int_{| \mathbf{\tilde{u}}_\parallel| \leq \frac{1}{2}}
 \frac{   | \mathbf{\tilde{u}}_\parallel| \dd | \mathbf{\tilde{u}}_\parallel|   \dd \rho
 %\dd \mathbf{\tilde{u}}_\parallel
}{\Big[| \mathbf{\tilde{u}}_3|^2+| \xi(y)| | \mathbf{\tilde{u}}_\parallel|^2 \Big]^{1/2}} \notag\\
 \lesssim & \
  | v| 
{\color{black}e^{- \frac{\varrho}{4}| v|^2}}%\int_{| 
%\mathbf{\tilde{u}}_3
%| \leq \frac{1}{2}}
\int_0^{ \frac{1}{2}}
\dd \mathbf{\tilde{u}}_3
\int_{0}^{ {1/2}}
% \int_{| \mathbf{\tilde{u}}_\parallel| \leq \frac{1}{2}}
 \frac{   \dd | \mathbf{\tilde{u}}_\parallel|^2
 %\dd \mathbf{\tilde{u}}_\parallel
}{\Big[| \mathbf{\tilde{u}}_3|^2+| \xi(y)| | \mathbf{\tilde{u}}_\parallel|^2 \Big]^{1/2}} \notag\\
=& \   | v| 
{\color{black}e^{- \frac{\varrho}{4}| v|^2}}
\int_0^{ \frac{1}{2}} \dd \mathbf{\tilde{u}}_3
\frac{1}{| \xi(y)| }
\Big( \sqrt{
| \mathbf{\tilde{u}}_3|^2+\frac{| \xi(y)| }{2} } - | \mathbf{\tilde{u}}_3| 
\Big) \notag\\
=& \ \frac{| v| 
{\color{black}e^{- \frac{\varrho}{4}| v|^2}}}{| \xi(y)| } \notag\\
& \ \  \times 
  \bigg\{\frac{1}{2}| \mathbf{\tilde{u}}_3| \sqrt{| \mathbf{\tilde{u}}_3|^2+\frac{| \xi| }{2}}+\frac{| \xi| }{4}\log\Big(\sqrt{| \mathbf{\tilde{u}}_3|^2+\frac{| \xi| }{2}}+| \mathbf{\tilde{u}}_3|  \Big)-\frac{1}{2}| \mathbf{\tilde{u}}_3| ^{1/2} \bigg\}\bigg| _{| \mathbf{\tilde{u}}_3| =0}^{| \mathbf{\tilde{u}}_3| =1/2} \notag\\
  = & \  \frac{| v| e^{-C| v|^2}}{| \xi| }\bigg\{\frac{1}{4}\sqrt{\frac{1}{4}+\frac{| \xi| }{2}}+\frac{\xi}{4}\log\Big( \sqrt{\frac{1}{4}+\frac{| \xi| }{2}}+\frac{1}{2}\Big)-\frac{| \xi| }{4}\log\Big(\sqrt{\frac{| \xi| }{2}} \Big)-\frac{1}{8} \bigg\} \notag\\
 \lesssim& \   \frac{| v| e^{-C| v|^2}}{| \xi| }\Big[| \xi| \log(| \xi| )+| \xi| \log\Big(1+\sqrt{1+| \xi| }\Big) \Big] \notag\\
 \lesssim  & \ 1+\log(| \xi(y)| ). \label{int_1/a_3}
\end{align}

Collecting terms from (\ref{int_1/a_1}), (\ref{int_1/a_2}), and (\ref{int_1/a_3}), we prove (\ref{eqn: int alpha du}). 

The proof of \eqref{int_alpha_du_delta} is the same with using the following estimate:
\begin{align*}
    &\int_{|u|<\delta} \frac{e^{-\varrho |v-u|^2}}{|v-u|} \frac{1}{\alpha(y,u)} \dd u \\
 & \lesssim
\iint_{|\mathbf{u}_\parallel|<\delta}
 \frac{e^{-\varrho| \mathbf{v}_\parallel-\mathbf{u}_\parallel|^2}}{| \mathbf{v}_\parallel-\mathbf{u}_\parallel| }\dd \mathbf{u}_\parallel
 \int_0^{\delta}\frac{  \dd \mathbf{u}_3}{\big[ | \mathbf{u}_3|^2+| \xi (y)| | \mathbf{u}_\parallel|^2\big]^{1/2}} \\
 &\lesssim o(1)\int_0^{\delta}\frac{  \dd \mathbf{u}_3}{\big[ | \mathbf{u}_3|^2+| \xi (y)| | \mathbf{u}_\parallel|^2\big]^{1/2}}.
\end{align*}

  \textit{Step 2. } We prove the following statement: {\color{black}{for $x\in \partial \Omega$ }}, we can choose $0 < \tilde{\delta}\ll_\O 1$ such that
\Be
\begin{split}
 \tilde{\delta}^{1/2}   | v\cdot \nabla \xi ( x-(t-s) v) | \gtrsim_\O&  \ | v| \sqrt{-\xi (
x-(t-s) v)},
 \\
  \text{for} & \ s\in
  \Big[t-\tb(x,v),
t-\tb(x,v)+ {\color{black} \tilde{t}} \Big]\cup
 \Big[t- {\color{black} \tilde{t}},
t  \Big]
,
\label{vpxxi_alpha_1}
\end{split} \Ee
\Be
 \tilde{\delta}^{1/2} \times
\tilde{\alpha}(x,v)  \lesssim_\O    | v| \sqrt{-\xi (
x-(t-s) v)}, \ \ \text{for}  \  s\in \Big[
t-\tb(x,v)+ {\color{black} \tilde{t}}, t- {\color{black} \tilde{t}}
\Big], \label{vpxxi_alpha_2}
\Ee
here $\tilde{t} =\min\{ \frac{\tb(x,v)}{2}, \tilde{\delta}\frac{\tilde{\alpha}(x,v)}{| v|^2}\}$. We note that when $\tilde{t}<\tilde{\delta}\frac{\tilde{\alpha}(x,v)}{| v|^2}$, \eqref{vpxxi_alpha_2} vanishes.

If $v=0$ or $v\cdot  \nabla \xi (x)\leq 0$, since {\color{black} $x\in \partial \Omega$}, then (\ref{vpxxi_alpha_1}) and (\ref{vpxxi_alpha_2}) hold clearly with {\color{black}$\tb(x,v)=\tilde{t}=0$}. We may assume $v\neq 0$ and $v\cdot \nabla \xi(x)>0$. {\color{black}Since $v\cdot \nabla\xi (x)> 0$, from \eqref{Velocity_lemma}, we have $| v\cdot \nabla\xi (x_{\mathbf{b}}(x,v) )|  >0$. Then we must have $v\cdot \nabla\xi (x_{\mathbf{b}}(x,v) ) <0$, otherwise $v\cdot \nabla \xi(\xb)>0$ implies $\xb$ is not the backward exit position defined in \eqref{BET}.} By the mean value theorem there exists at least one $t^{\ast } \in (t-\tb(x,v)  , t)$ such that $v\cdot \nabla \xi
(x- t^*v)=0$. Moreover, the convexity in (\ref{convex}) leads to $\frac{d}{ds}\big(v\cdot \nabla \xi (x-(t-s)v)\big)=v\cdot
\nabla ^{2}\xi (x-(t-s)v )\cdot v\geq C_{\xi }| v| ^{2},$ and therefore $t^{\ast }\in (t-\tb(x,v), t)$ is unique. %Clearly we have $v\cdot \nabla \xi (x-(\min\{\tau,t_{\mathbf{b}}\}-s)v)\geq 0$ for $s\in \lbrack t^{\ast },t_{\mathbf{b}%
%}(x,v)] $ and $v\cdot \nabla \xi (x-(\min\{\tau,t_{\mathbf{b}}\}-s)v)\leq 0$ for $%
%s\in \lbrack 0,t^{\ast }]$.

Let $s \in \big[t-\tilde{t},t\big]$ for $0<\tilde{\delta}\ll 1$ and $\tilde{t}\leq \tilde{\delta}\frac{\tilde{\alpha}(x,v)}{| v|^2}$. Then from the fact that $v\cdot \nabla_x \xi(x-(t-\tau )v)$ is non-decreasing function in $\tau$ and {\color{black}$x\in \partial \Omega$},
\Be \label{exp_xi}
| v|^2  (-1) \xi(x-(t-s)v)=\int^t_s | v|^2   v\cdot \nabla_x \xi(x-(t-\tau )v)    \dd \tau
\leq \tilde{\delta}  \tilde{\alpha}(x,v)
| v\cdot \nabla_x \xi(x)|  .
\Ee
By~\eqref{Velocity_lemma} $ | v\cdot \nabla_x \xi (x)| \leq \tilde{\alpha}(x,v) \leq C_\O \tilde{\alpha}(x-(t-s)v,v)\leq C_\O \{
| v \cdot \nabla \xi (x-(t-s)v)|  + \Vert  \nabla_x^2 \xi \Vert _\infty | v|  \sqrt{- \xi(x-(t-s)v)}
\},$ we choose $\tilde{\delta}\ll C_\O \Vert  \nabla_x^2 \xi \Vert _\infty^{-2}$ and absorb the $|v|^2 |\xi(x-(t-s)v)|$ term in
\begin{align*}
    &  \tilde{\delta} \tilde{\alpha}(x,v)  \times C_\O  \{
| v \cdot \nabla \xi (x-(t-s)v)|  + \Vert  \nabla_x^2 \xi \Vert _\infty | v|  \sqrt{- \xi(x-(t-s)v)}
\} \\
    & \leq
\tilde{\delta} \times \{C_\O \Vert  \nabla_x^2 \xi \Vert _\infty | v|  \sqrt{- \xi (x-(t-s)v)}\}^2 + \tilde{\delta}\times \{C_\Omega |v\cdot \nabla \xi(x-(t-s)v)|\}^2
\end{align*}
by the left hand side of (\ref{exp_xi}). This gives (\ref{vpxxi_alpha_1}) for $s \in \big[t-
\tilde{t} ,t\big]$. The proof for $s \in \big[t-\tb(x,v),
t-\tb(x,v)+ \tilde{t} \big]$ is same.

For \eqref{vpxxi_alpha_2}, we assume $\tilde{t} = \tilde{\delta} \frac{\tilde{\alpha}(x,v)}{| v|^2}$, otherwise \eqref{vpxxi_alpha_2} vanishes. Since $v\cdot \nabla_x \xi(x-(t-s)v)>0$ when $s>t_*$, we have that $\xi(x-(t-s)v)$ is increasing in $s$ when $s>t_*$, % and $v\cdot \nabla_x \xi(x- (t-s) v)$ is non-decreasing in $s \in [t-t^*,t]$, from (\ref{exp_xi}),
thus $| v|^2 (-1) \xi(x-(t-s)v)\geq | v|^2 (-1) \xi\Big(x-\tilde{\delta} \frac{\tilde{\alpha}(x,v)}{| v|^2} v\Big)$ for $s\in \big[t- t^*, t- \tilde{\delta} \frac{\tilde{\alpha}(x,v)}{| v|^2}
\big]$. By an expansion, for $s^*:=t-\tilde{\delta} \frac{\tilde{\alpha}(x,v)}{| v|^2}$,

\Be\label{exp_xi_td}
\begin{split}
    & | v|^2  (-1) \xi\Big(x-\tilde{\delta} \frac{\tilde{\alpha}(x,v)}{| v|^2} v\Big) \\
    &=| v|^2 ( v\cdot \nabla_x \xi(x ))\tilde{\delta} \frac{\tilde{\alpha}(x,v)}{| v|^2}
-\int^t_{s^*}\int^t_{\tau}| v|^2 v\cdot \nabla_x^2 \xi(x-(t-\tau^\prime) v) \cdot v\dd \tau^\prime\dd \tau.
\end{split}
\Ee
The last term of (\ref{exp_xi_td}) is bounded by $ \Vert  \nabla_x^2 \xi \Vert _\infty\tilde{\delta}^2 \big(\frac{\tilde{\alpha}(x,v)}{| v|^2}\big)^2 | v| ^4 \leq  \Vert  \nabla_x^2 \xi \Vert _\infty\tilde{\delta}^2   (\tilde{\alpha}(x,v))^2$. Since $v \cdot \nabla_x \xi(x) = \tilde{\alpha}(x,v)$ when $x\in \p\O$, for $\tilde{\delta} \ll \Vert  \nabla_x^2 \xi \Vert _\infty^{-1/2}$, the right hand side of (\ref{exp_xi_td}) is bounded below by $\frac{\tilde{\delta}}{2} (\tilde{\alpha}(x,v))^2$. This completes the proof of (\ref{vpxxi_alpha_2}) when $s\in \big[t- t^*, t- \tilde{\delta} \frac{\tilde{\alpha}(x,v)}{| v|^2} 
\big]$. The proof for the case of $s\in \big[ t - \tb(x,v)+ \tilde{\delta} \frac{\tilde{\alpha}(x,v)}{| v|^2} , t- t^*
\big]$ is same.

\textit{Step 3. } From (\ref{eqn: int alpha du}), for the proof of \eqref{t_1_t_2}, it suffices to estimate
 \Be\label{int_lnxi+lnv}
 \begin{split}
     &  \int^t_{t-\tb(x,v)} \mathbf{1}_{t-t_2\geq s \geq t-t_1 } e^{-C \langle v\rangle (t-s)} \big|  \ln | \xi(x-(t-s)v)| \big|  \dd s \\
      &  + \int^t_{t-\tb(x,v)} \mathbf{1}_{t-t_2\geq s \geq t-t_1 } e^{-C \langle v\rangle (t-s)}\big(1+ \big|  \ln |  v| \big|  \big)\dd s .
 \end{split}
 \Ee
We bound the second term of (\ref{int_lnxi+lnv}) as
\Be\label{small_t}
(1+| \ln| v| | )\int^{t-t_2}_{t-t_1}e^{-C\langle v\rangle (t-s)}   \lesssim (1+| \ln| v| | )\langle v\rangle^{-1} | e^{-C\langle v\rangle t_2}-e^{-C\langle v\rangle t_1}| .
\Ee
{\color{black}For the first term of \eqref{int_lnxi+lnv}, we first assume $x\in \partial \Omega$.} For utilizing (\ref{vpxxi_alpha_1}) and (\ref{vpxxi_alpha_2}), we split the first term of (\ref{int_lnxi+lnv}) as
 \Be\label{int_ln}
% \int^t_{t-\tb(x,v)} \mathbf{1}_{s \geq t-\e } e^{-C \langle v\rangle (t-s)} \big|  \ln | \xi(x-(t-s)v)| \big|  \dd s
 %=
 \underbrace{ \int^t_{t- \tilde{t}}
 + \int^{t -\tb(x,v)+ \tilde{t}}_{t -\tb(x,v) }}_{(\ref{int_ln})_1}
 + \underbrace{ \int_{t -\tb(x,v)+ \tilde{t}} ^{t- \tilde{t}}}_{(\ref{int_ln})_2}.
 \Ee
Without loss of generality, we assume {\color{black}$t-t_2\in [t-\tilde{t},t]$, $t-t_1\in [t-\tb(x,v)+\tilde{t},t-\tilde{t}]$.} For the first term $(\ref{int_ln})_1$ we use a change of variables $s \mapsto   -\xi(x-(t-s)v)$ in $s \in [t-\tb(x,v), t-t^*]$ and $s \in [t-t^*,t ]$ separately with $\dd s = | v\cdot \nabla_x \xi(x-(t-s)v)| ^{-1} \dd | \xi| $. From (\ref{exp_xi}), we deduce $| \xi(x-(t-s)v) |  \leq \tilde{\delta}\frac{\tilde{\alpha}^2(x,v)}{| v|^2}$. Then applying H\"{o}lder inequality with $\beta+(1-\beta)=1$ and using (\ref{vpxxi_alpha_1}), we get
 \begin{align}
 &(\ref{int_ln})_1 \mathbf{1}_{\{t-t_2\in [t-\tilde{t},t]\},t-t_1\in [t-\tb(x,v)+\tilde{t},t-\tilde{t}]}  \notag\\
   & \lesssim \Big(\big[\int^{t-t_2}_{t-\tilde{t}}          e^{-C\langle v\rangle (t-s)/\beta} \dd s \big]^{\beta}+\big[\int^{t-\tb(x,v)+\tilde{t}}_{t-t_1}          e^{-C\langle v\rangle (t-s)/\beta} \dd s \big]^{\beta}\Big)  \notag\\
   & \ \ \ \times  \Big[ \int_0^{\tilde{\delta}\frac{\tilde{\alpha}^2(x,v)}{| v|^2}}
| \ln| \xi|  | ^{1/(1-\beta)}
 \frac{\dd | \xi| }{ \tilde{\delta}^{-1/2}| v|  \sqrt{ |  \xi | }}  \Big]^{1-\beta} \notag\\
      &\lesssim | e^{-C\langle v\rangle t_2/\beta }-e^{-C\langle v\rangle t_1/\beta }| ^\beta\frac{1}{| v| ^{1-\beta}}, \label{est:int_ln_1}
 \end{align}
where we have used $t-\tilde{t}>t-t_1, \quad t-t_2>t-\tb(x,v)+\tilde{t}$ and $\frac{| \ln | \xi| | ^{1/(1-\beta)}}{\sqrt{\xi}} \in L^1_{loc}(0,\infty)$ for $\beta<1$ in the last line.

On the other hand, since $\Vert \xi\Vert_\infty \lesssim 1$, we only consider the contribution of $|\xi|\lesssim 1$ in $\ln(|\xi|)$. From (\ref{vpxxi_alpha_2}),
\begin{align}
 (\ref{int_ln})_2& \leq \int^{t-t_2}_{t-t_1} e^{-C \langle v\rangle (t-s)} \Big|  \ln \Big(\tilde{\delta} \frac{ (\tilde{\alpha}(x,v))^2}{| v|^2}\Big) \Big|  \dd s
 \notag \\
 &\leq
2 \int^{t-t_2}_{t-t_1} e^{-C \langle v\rangle (t-s)}  \{
|  \ln \tilde{\delta} |   + | \ln \tilde{\alpha}(x,v)|  + | \ln | v| | \}
 \dd s
\notag \\
& \leq 2
| e^{-C\langle v\rangle t_2}-e^{-C\langle v\rangle t_1}|  \langle v\rangle^{-1/2}\{
 |  \ln \tilde{\delta} |  +  | \ln \tilde{\alpha}(x,v)|    +  | \ln | v| | \}, \label{est:int_ln_2}
\end{align}
where we have used a similar estimate of (\ref{small_t}). 

{\color{black}Now as assume $x\notin \partial \Omega$. We find $\bar{x}\in \partial \Omega$ and $\bar{t}$ so that 
\[x = \bar{x} - (\bar{t} - t)v \text{ and } \bar{t}>t. \]
Then clearly, $x-(t-s)v = \bar{x}-(\bar{t}-s)v.$ Since $\bar{x}\in \partial \Omega$, applying the same computation as \eqref{est:int_ln_1} and \eqref{est:int_ln_2}, the first term of \eqref{int_lnxi+lnv} is bounded by
\begin{align}
    & \int^{\bar{t}}_{\bar{t}-\tb(\bar{x},v)} \mathbf{1}_{t-t_2\geq s \geq t-t_1} e^{-C\langle v\rangle (\bar{t}-s)}| \ln | \xi(\bar{x}-(\bar{t}-s)v)| |  \dd s \notag\\
    & \lesssim \eqref{est:int_ln_1} + | e^{-C\langle v\rangle t_2}-e^{-C\langle v\rangle t_1}|  \langle v\rangle^{-1/2}\{
 |  \ln \tilde{\delta} |  +  | \ln \tilde{\alpha}(\bar{x},v)|    +  | \ln | v| | \}. \label{x_out_bdr}
\end{align}
Here we used $\tilde{\alpha}(\bar{x},v) \thicksim \tilde{\alpha}(x,v)$ from \eqref{Velocity_lemma} to obtain the same upper bound as \eqref{est:int_ln_1}. Again using $\tilde{\alpha}(\bar{x},v) \thicksim \tilde{\alpha}(x,v)$, we conclude
\[\eqref{x_out_bdr} \lesssim \eqref{est:int_ln_1} + \eqref{est:int_ln_2}.\]}
Then collecting~\eqref{int_lnxi+lnv},~\eqref{small_t},~\eqref{est:int_ln_1} and~\eqref{est:int_ln_2},
we conclude that for $C_1=C/\beta$,
\begin{align}
&\int^{t-t_1}_{t-t_2} \int_{\mathbb{R}^{3}}\frac{e^{-C\langle v\rangle (t-s)} e^{-\varrho | v-u| ^{2}}}{| v-u|  \alpha (  x-(t-s)v, {u}) }\mathrm{d}u\mathrm{d}s \notag\\
  &\lesssim  | e^{-C_1\langle v\rangle t_1 }-e^{-C_1\langle v\rangle t_2}| ^{\beta}\big[ \langle v\rangle^{-1/2}\big(
1+  | \ln | v| |  +  | \ln  \tilde{\alpha}(x,v)| \big) +  \frac{1}{| v| ^{1-\beta}}\big] \label{NLN general}.
\end{align}
Then by~\eqref{n geq alpha}, $\tilde{\alpha}(x,v)\lesssim |v|$, $\alpha(x,v)\lesssim \min\{1,|v|\}$, and thus $1\leq \frac{1}{\alpha(x,v)}$, we bound
\begin{align*}
   \frac{|\ln |v||}{\langle v\rangle^{1/2}} & = \mathbf{1}_{|v|\geq 1} + \mathbf{1}_{|v|<1}\lesssim 1+\mathbf{1}_{|v|< 1} \frac{|\ln |v||}{\langle v\rangle^{1/2}}  \\
& \lesssim 1+\mathbf{1}_{|v|<1}\frac{1}{|v|}\lesssim \frac{1}{\alpha(x,v)},
\end{align*}
\begin{align*}
  \frac{|\ln \tilde{\alpha}(x,v)|}{\langle v\rangle^{1/2}}  & =\mathbf{1}_{\tilde{\alpha}(x,v)\geq 1} + \mathbf{1}_{\tilde{\alpha}(x,v)<1} \lesssim \mathbf{1}_{|v|\gtrsim 1}\frac{|\ln |v||}{\langle v\rangle^{1/2}} + \mathbf{1}_{\alpha(x,v)\lesssim 1}\frac{|\ln \alpha(x,v)|}{\langle v\rangle^{1/2}}\lesssim \frac{1}{\alpha(x,v)},
\end{align*}
\begin{align*}
   \frac{1}{|v|^{1-\beta}} & =\mathbf{1}_{|v|\geq 1} + \mathbf{1}_{|v|<1} \leq 1+\frac{1}{(\alpha(x,v))^{1-\beta}}\lesssim \frac{1}{\alpha(x,v)} .
\end{align*}
Therefore, we conclude
\[\eqref{NLN general}\lesssim \frac{| e^{-C_1\langle v\rangle t_1 }-e^{-C_1\langle v\rangle t_2}|^{\beta}}{\alpha(x,v)},\]
then~\eqref{t_1_t_2} follows. \eqref{t_1_t_2_delta} follows by the extra $o(1)$ from \eqref{int_alpha_du_delta}.

\end{proof}

\ \\

\section{Uniform-in-time weighted $C^1$ estimate}\label{sec:C1}
In this section, we prove Theorem \ref{thm:dynamic_regularity}. We will take $\tilde{\theta} \ll \theta$ in \eqref{f0_regularity}, and thus $\tilde{\theta}$ satisfies the condition in Lemma \ref{Lemma: k tilde}.  

First we note that under the conditions in Theorem \ref{thm:dynamic_regularity}, the dynamical stability~\eqref{dym_stability} has been established in~\cite{EGKM}, and the steady problem~\eqref{linearization_steady} is well-posedness with $L^\infty$ bound
\Be\label{infty_bound}
 \Vert wf_s\Vert_\infty + |wf_s|_\infty  \lesssim \| T_W-T_0\|_{L^\infty(\partial\Omega)} .
 \Ee
Moreover, \cite{CK} constructed the weighted $C^1$ estimate of the steady problem
\begin{align}
 \| w_{\tilde{\theta}}(v) \alpha(x,v)  \nabla_x f_s (x,v)\|_\infty
& \lesssim \| T_W-T_0\|_{C^1(\p\O)}.
\label{estF_n}
\end{align}
Here $\alpha(x,v)$ is the kinetic weight defined in~\eqref{kinetic_distance}. We sketch the proof of~\eqref{estF_n} in Section \ref{sec:proof_sketch}.
{\color{black}\begin{remark}
In Section \ref{sec:proof_sketch}, we will only focus on the a priori estimate. The existence can be justified through the sequential argument, we refer detail to \cite{CK}.   
\end{remark}
}

For $(x,v)\in \Omega \times \mathbb{R}^3$, we apply method of characteristic to~\eqref{f_equation} and obtain
\begin{align*}
   f(t,x,v) & = \mathbf{1}_{t\leq \tb(x,v)} e^{-\nu t} f_0(x-tv,v) \\
    & + \mathbf{1}_{t>\tb(x,v)} e^{-\nu \tb(x,v)} f(t-\tb,\xb,v) \\
    &  + \int^t_{\max\{0,t-\tb\}} e^{-\nu(v)(t-s)} \int_{\mathbb{R}^3} \mathbf{k}(v,u) f(s,x-(t-s)v,u) \dd u \dd s\\ 
    & + \int^t_{\max\{0,t-\tb\}} e^{-\nu(v)(t-s)} h(s,x-(t-s)v,u)  \dd s.
\end{align*}
Here we denote all the $\Gamma$ term in~\eqref{f_equation} as
\begin{equation}\label{h}
h(t,x,v) = [\Gamma(f_s,f)+\Gamma(f,f_s)+\Gamma(f,f)](t,x,v).
\end{equation}

We take spatial derivative to get
\begin{align}
 \nabla_x f(t,x,v)  & =\mathbf{1}_{t\leq \tb(x,v)} e^{-\nu t} \nabla_x f_0(x-tv,v) \label{nabla_x_1}\\
  &    - \mathbf{1}_{t>\tb} \nu \nabla_x \tb(x,v) e^{-\nu \tb(x,v)} f(t-\tb,\xb,v) \label{nabla_x_2}\\
  & - \mathbf{1}_{t>\tb} e^{-\nu \tb} \nabla_x \tb \partial_t f(t-\tb,\xb,v)  \label{nabla_x_3}\\
  & + \mathbf{1}_{t>\tb} e^{-\nu \tb} \sum_{i=1,2} \nabla_x \mathbf{x}_{p^1,i}^1  \partial_{\mathbf{x}_{p^1,i}^1} f(t-\tb,\xb,v) \label{nabla_x_4}\\
  & + \int^t_{\max\{0,t-\tb\}} e^{-\nu(t-s)}\int_{\mathbb{R}^3} \mathbf{k}(v,u) \nabla_x f(s,x-(t-s)v,u) \dd u \dd s \label{nabla_x_5}\\
  & +\mathbf{1}_{t>\tb} e^{-\nu\tb}\nabla_x \tb \int_{\mathbb{R}^3} \mathbf{k}(v,u) f(t-\tb,\xb,u) \dd u \label{nabla_x_6}\\
  & + \int_{\max\{0,t-\tb\}}^t e^{-\nu(t-s)} \nabla_x h(s,x-(t-s)v,u)  \dd s \label{nabla_x_7}\\
  & + \mathbf{1}_{t>\tb} e^{-\nu \tb} \nabla_x \tb h(t-\tb,\xb,u) . \label{nabla_x_8}
\end{align}
Here $h$ in~\eqref{nabla_x_7} and~\eqref{nabla_x_8} is defined in~\eqref{h}. In~\eqref{nabla_x_4}, $\mathbf{x}_{p^1,i}^1$ is defined in Definition \ref{definition: chart} and $\xb=x^1=\eta_{p^1}(\mathbf{x}_{p^1}^1)$. 

\begin{remark}
With a compatibility condition it is standard to check the piece-wise formula~\eqref{nabla_x_1} - \eqref{nabla_x_8} is actually a weak derivative of $f$ and continuous across $\{t=\tb(x,v)\}$(see \cite{GKTT2} for the details).
{\color{black}
We will only focus on the a priori estimate to the weighted $C^1$ estimate $\Vert w_{\tilde{\theta}}(v)\alpha\nabla_x f\Vert_\infty.$ For existence one can apply iteration sequence, we refer detail to \cite{GKTT}. }
\end{remark}

Before proving Theorem \ref{thm:dynamic_regularity}, we need some preparations for~\eqref{nabla_x_3}, \eqref{nabla_x_4}, \eqref{nabla_x_7} and~\eqref{nabla_x_8}, they are presented in Section \ref{sec:para_3}, Section \ref{sec:parp_4} and Section \ref{sec:para_78} respectively. Then in Section \ref{sec:proof_thm_2} we prove Theorem \ref{thm:dynamic_regularity}.

\ \\

\subsection{Preparation for~\eqref{nabla_x_3}.}\label{sec:para_3}
The following corollary gives a decay-in-time estimate for the temporal derivative $\partial_t f(t)$ in \eqref{nabla_x_3}:
\begin{corollary}\label{corollary:time_derivative}
Assume the compatibility condition~\eqref{compatibility} and~\eqref{compatibility_derivative} hold for $f_0(x,v)$ and $\p_t f_0(x,v)$ defined in~\eqref{partial_t_f0}.

If $\partial_t f_0(x,v)$ satisfies~\eqref{partial_f0_bdd} and is continuous away from $\gamma_0$, then $\partial_t f$ is continuous away from the grazing set $\gamma_0$. Moreover, we have the $L^\infty$ estimate
\begin{align}
  \Vert w\partial_t f(t)\Vert_\infty + |w\partial_t f(t)|_\infty  &\lesssim  e^{-\lambda t} \Vert w\partial_t f_0\Vert_\infty. \label{time_derivative_decay}
\end{align}
\end{corollary}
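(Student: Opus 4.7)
The plan is to observe that $g:=\partial_t f$ satisfies the \emph{same} linearized Boltzmann system as $f$, modulo the transfer of one $f$-factor into a $g$-factor in the nonlinear contribution, and then to invoke the linearized dynamical stability theory of \cite{EGKM} verbatim. Concretely, since \eqref{f_equation} is autonomous in $t$ and $f_s$ is steady, differentiating in $t$ yields
\begin{equation*}
\partial_t g + v\cdot\nabla_x g + \nu(v) g \;=\; K(g)+\Gamma(f_s,g)+\Gamma(g,f_s)+\Gamma(f,g)+\Gamma(g,f),
\end{equation*}
and the compatibility hypothesis \eqref{compatibility_derivative} is exactly the diffuse reflection boundary condition \eqref{f_bc} applied to $g$. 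Thus $g$ solves a linear Boltzmann problem with diffuse reflection boundary, initial datum $\partial_t f(0,x,v)$ given by \eqref{partial_t_f0} (bounded by \eqref{partial_f0_bdd}), and with coefficients $f_s,f$ that are uniformly small thanks to \eqref{infty_bound} and \eqref{dym_stability}.

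Next I would establish the decay \eqref{time_derivative_decay}. Because $\Vert wf_s\Vert_\infty$ and $\sup_t e^{\lambda t}\Vert wf(t)\Vert_\infty$ are both $\ll 1$, the quartet $\Gamma(f_s,g)+\Gamma(g,f_s)+\Gamma(f,g)+\Gamma(g,f)$ is a small linear perturbation of $L=\nu-K$ in the weighted $L^\infty$ norm, via \eqref{Gamma bounded} and \eqref{Kf_bdd}. The $L^\infty$--$L^2$ framework of \cite{EGKM} then applies to $g$ in exactly the form used to prove \eqref{dym_stability}: one closes a Duhamel inequality for $\Vert wg(t)\Vert_\infty + |wg(t)|_\infty$ along the stochastic cycles, absorbs the small $\Gamma$-terms into the exponential damping from $\nu(v)$, handles the boundary integral by the standard small-velocity / large-bounce splitting, and obtains
\begin{equation*}
e^{\lambda t}\bigl[\Vert wg(t)\Vert_\infty + |wg(t)|_\infty\bigr] \;\lesssim\; \Vert wg(0)\Vert_\infty + |wg(0)|_\infty,
\end{equation*}
for some $\lambda\ll 1$; the boundary norm on the right is controlled by the interior norm through \eqref{compatibility_derivative} together with $\int_{n\cdot u>0}M_W(x,v)\sqrt{\mu(u)}\{n\cdot u\}\,\mathrm d u\lesssim 1$.

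Finally, to upgrade $g$ to continuity on $\bar\Omega\times\mathbb{R}^3\setminus\gamma_0$, I would write the mild/Duhamel formula for $g$ along characteristics---structurally the same as the formula leading to \eqref{nabla_x_1}--\eqref{nabla_x_8} for $f$ but without the $\nabla_x\tb$ and $\partial_{\mathbf{x}_{p^1,i}^1}$ factors since we are not differentiating in $x$---and note that each ingredient is continuous away from $\gamma_0$: the initial layer $\mathbf{1}_{t\le\tb}e^{-\nu t}\partial_t f_0(x-tv,v)$ is continuous by the hypothesis on $\partial_t f_0$; the boundary layer $\mathbf{1}_{t>\tb}e^{-\nu\tb}g(t-\tb,\xb,v)$ is continuous across $\{t=\tb\}$ thanks to \eqref{compatibility_derivative}; and the $K$ and $\Gamma$ contributions are continuous because their integral kernels smooth in $v$ and the backward trajectory $(t,x,v)\mapsto(x-(t-s)v,u)$ is continuous away from $\gamma_0$. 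This is exactly the propagation-of-continuity argument of \cite{GKTT}, applied to $g$ instead of $f$.

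The main obstacle is verifying that the $L^\infty$ decay machinery of \cite{EGKM} really does survive the replacement $\Gamma(f,f)\rightsquigarrow\Gamma(f,g)+\Gamma(g,f)$, i.e.\ that the Gr\"onwall-type closure used to prove \eqref{dym_stability} for $f$ can be reused for $g$ with $f,f_s$ acting only as small multipliers rather than as the unknown; this is essentially a bookkeeping check, but it is where one must be careful that no term quadratic in $g$ appears (indeed none does) and that the boundary reflection argument, which couples outgoing and incoming traces, still closes at the same rate $\lambda$.
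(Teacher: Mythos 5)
Your proposal matches the paper's approach at the conceptual level: differentiate \eqref{f_equation} in $t$ to see that $g:=\partial_t f$ satisfies a \emph{linear} Boltzmann system in which $f_s$ and $f$ enter only as small multipliers (so that the quartet $\Gamma(f_s,g)+\Gamma(g,f_s)+\Gamma(f,g)+\Gamma(g,f)$ can be absorbed), impose the time-differentiated diffuse-reflection boundary condition, and invoke the $L^\infty$ decay theory of \cite{EGKM} together with the propagation-of-continuity argument of \cite{GKTT}. This is exactly what the paper does, modulo implementation: the paper runs a frozen-coefficient iteration (source and boundary remainder at level $\ell$, unknown at level $\ell+1$) in order to cite Proposition~\ref{prop:linfty_decay} on a genuinely linear problem and to construct $\partial_t f$ and its estimate simultaneously, and it splits the boundary operator as $P_\gamma g + \frac{M_W-\mu}{\sqrt{\mu}}\int_{n\cdot u>0}g\sqrt{\mu}\{n\cdot u\}$, with the second piece treated as a small inflow source $r$. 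Your sketch instead works directly with the $M_W$ boundary operator as in the proof of \eqref{dym_stability}; this is acceptable because the same machinery applies either way.

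One thing you gloss over that the paper makes explicit, and which is not automatic: Proposition~\ref{prop:linfty_decay} requires the orthogonality conditions $\iint \partial_t f_0\sqrt{\mu}=0$, $\int_\gamma r\sqrt{\mu}=0$, and $\iint g\sqrt{\mu}=0$ on the data, source, and boundary remainder. The paper verifies $\iint \partial_t f_0\sqrt{\mu}=0$ by Green's identity together with the compatibility condition \eqref{compatibility} for $f_0$ and the normalization $\int_{n\cdot v<0}M_W[n\cdot v]=-1$, and verifies the other two from mass conservation of $Q$ and $\int_v \frac{M_W-\mu}{\sqrt{\mu}}\sqrt{\mu}[n\cdot v]=0$. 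Your ``bookkeeping check'' paragraph acknowledges that no quadratic-in-$g$ terms appear and that the boundary coupling closes at rate $\lambda$, but does not flag that these zero-mean hypotheses must be checked before the cited decay result is even applicable; that step should be added. Otherwise the proposal is sound.
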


To prove Corollary \ref{corollary:time_derivative} we first cite a result from~\cite{EGKM} regarding the decay-in-time $L^\infty$ estimate for the following linear problem:
\begin{equation}\label{inflow}
\begin{split}
  \partial_t f+v\cdot \nabla_x f + L(f) =g , & \\
    f|_{\gamma_-} = P_\gamma f + r. &   
\end{split}
\end{equation}
Here $P_\gamma f$ is defined as the projection
\begin{equation}\label{p_gamma}
P_\gamma f = \sqrt{\mu(v)} \int_{n(x)\cdot u>0} f(u) \sqrt{\mu(u)}\{n(x)\cdot u\} \dd u.
\end{equation}

\begin{proposition}[Proposition 6.1 and 7.1 in~\cite{EGKM}]\label{prop:linfty_decay}
Let $\Vert wf_0\Vert_\infty+ |\langle v\rangle wr|_\infty + \big\Vert \frac{w}{\langle v\rangle}g\big\Vert_\infty < \infty $ and $\iint \sqrt{\mu}g=\int_\gamma r\sqrt{\mu}=\iint f_0\sqrt{\mu}=0$, then there exists a solution to~\eqref{inflow} that satisfies
{\color{black}\begin{equation}\label{linfty_decay}
\Vert wf(t)\Vert_\infty + |wf(t)|_\infty \lesssim e^{-\lambda t}\{\Vert wf_0\Vert_\infty +\sup_{s\leq t} e^{\lambda s}\Big\Vert \frac{w}{\langle v\rangle}g(s) \Big\Vert_\infty + \sup_{s\leq t}e^{\lambda s}|\langle v\rangle wr(s)|_\infty\}.
\end{equation}}
If $f_0|_{\gamma_-}=P_\gamma f_0 + r_0$, $f_0$, $r$ and $g$ are continuous away from $\gamma_0$, then $f(t,x,v)$ is continuous away from $\gamma_0$.

\end{proposition}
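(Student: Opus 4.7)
The plan is to prove \eqref{linfty_decay} by the two-tier strategy introduced in \cite{EGKM}: first establish an $L^2$ exponential decay, then bootstrap to $L^\infty$ via a double Duhamel expansion along the stochastic characteristics of Definition \ref{definition: sto cycle}.

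For the $L^2$ step, I would test the equation \eqref{inflow} against $f$ itself. The transport term yields a boundary contribution
\[
\tfrac12\int_{\gamma_+}|f|^2 (n\cdot v)\,dS\,dv - \tfrac12\int_{\gamma_-}|P_\gamma f + r|^2 |n\cdot v|\,dS\,dv,
\]
and since $P_\gamma$ is the orthogonal projection onto $\sqrt\mu$ with respect to the measure $|n\cdot v|\,dv$, the $P_\gamma f$ pieces combine to produce a non-negative coercive term $|(1-P_\gamma)f|_{2,+}^2$ modulo a source controlled by $|r|_2$. Combined with microscopic coercivity $\langle Lf,f\rangle \gtrsim \|(\mathbf I-\mathbf P)f\|_\nu^2$, this controls the microscopic part and the reflection defect. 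Recovery of $\mathbf Pf$ proceeds via the weak-formulation / Korn–Poincar\'e argument of \cite{EGKM}: test \eqref{inflow} against $\psi_a(v)\varphi_a(x)\sqrt\mu$ with $\varphi_a$ solving auxiliary Poisson-type problems, and combine the resulting identities. The zero-mass hypotheses $\iint f_0\sqrt\mu = \iint g\sqrt\mu = \int_\gamma r\sqrt\mu = 0$ propagate to $\iint f(t)\sqrt\mu=0$ for all $t$, killing the constant mode of $\mathbf Pf$ which the dissipation does not see. Gr\"onwall then yields
\[
\|f(t)\|_2 \lesssim e^{-\lambda t}\|f_0\|_2 + \int_0^t e^{-\lambda(t-s)}\bigl\{\|\langle v\rangle^{-1}g(s)\|_2 + |r(s)|_2\bigr\}\, ds.
\]

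For the $L^\infty$ bootstrap, I would write the equation as $[\partial_t + v\cdot\nabla_x + \nu]f = Kf + g$ with $f|_{\gamma_-} = P_\gamma f + r$, and express $f$ along each stochastic characteristic, unfolding the diffuse reflection through the cycles $(x^k,v^k)$. A single iteration of Duhamel inside the $Kf$ integral gives a double velocity integral with kernel $\mathbf k(v,u)\mathbf k(u,u')$; using Lemma \ref{Lemma: k tilde} together with \eqref{k_theta} to rebalance the weight $w$, one splits the integration region into a ``bad set'' (large $|u|$ or $|u'|$, short time gaps $t-s_1$ or $s_1-s_2$, or trajectories close to the grazing set) and a ``good set''. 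On the bad set the kernel estimate gives a small factor $\varepsilon$ absorbable into $\varepsilon\sup_{s\leq t}\|wf(s)\|_\infty$. On the good set, one applies the change of variables $u\mapsto y = x^k - (s_1-s_2)u$ — whose Jacobian is bounded below thanks to the bounded-velocity and bounded time-gap restrictions — converting the remaining integral of $f$ into an $L^2$ quantity $\sup_{s\leq t}\|f(s)\|_2$, which decays by the first step. After absorbing the $\varepsilon\sup$, this produces the stated bound for $\|wf(t)\|_\infty$; the trace bound $|wf(t)|_\infty$ is recovered by taking $x\to\partial\Omega$ along an outgoing direction in the same representation, with the $\langle v\rangle$ factor on $r$ absorbing the $v$-weight coming from the boundary measure. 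Finally, continuity away from $\gamma_0$ follows because $(x,v)\mapsto (\tb,\xb)$ is continuous off $\gamma_0$ and the compatibility condition $f_0|_{\gamma_-} = P_\gamma f_0 + r_0$ kills the potential jump across $\{t=\tb\}$ in the piecewise representation.

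The main obstacle is the stochastic-cycle change of variables in the $L^\infty$-to-$L^2$ step. Because diffuse reflection erases the memory of the velocity at every bounce, controlling $\|wf\|_\infty$ by $\|f\|_2$ requires a quantitative non-concentration property showing that after a bounded number (e.g.\ three or four) of bounces the composite map from the cycle velocities $(v^1,v^2,\dots)$ to positions along the trajectory is non-degenerate outside a set of arbitrarily small measure in velocity space. Establishing this uniformly, with the exponential weight $w(v)=e^{\theta|v|^2}$ preserved along the cycles via the constraint $\tilde\theta\ll \theta$ built into Lemma \ref{Lemma: k tilde}, is the technical heart of \cite{EGKM} and is exactly what couples the Step 1 decay into the Step 2 pointwise estimate.
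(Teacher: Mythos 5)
The paper does not prove this proposition itself — it is stated as a direct citation of Propositions 6.1 and 7.1 of \cite{EGKM} — so there is no in-paper proof to compare against; your sketch is a fair high-level reconstruction of exactly the two-stage $L^2$--to--$L^\infty$ strategy of \cite{EGKM} that the citation invokes. One small imprecision: the non-degeneracy of the change of variables over the stochastic cycles in the $L^\infty$ step is not a ``three or four bounces'' statement but a $k$-bounce argument with $k$ taken large (depending on the desired smallness), exploiting that the set of velocity sequences keeping the trajectory near grazing for all $k$ bounces has measure that decays in $k$; otherwise the outline matches the cited argument.
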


\begin{proof}[Proof of Corollary \ref{corollary:time_derivative}]
First, we consider the following linear problem:
\begin{align*}
    \partial_t \partial_t f + v\cdot \nabla_x \partial_t f+L(\partial_tf) = \partial_t g &\\
    \partial_t f|_{\gamma_-} = P_{\gamma} \partial_t f + \partial_t r,
\end{align*}
with the initial condition
 satisfying \eqref{compatibility_derivative}:
 \begin{equation*}
\partial_t f_0|_{\gamma_-} = P_\gamma \partial_t f_0 + \partial_t r_0.    
 \end{equation*}
Assuming $\p_t r$ and $\p_t g$ are continuous away from $\gamma_0$ and $\iint \sqrt{\mu}\partial_t g=\int_\gamma \partial_t r\sqrt{\mu}=\iint \partial_t f_0\sqrt{\mu}=0$, by Proposition \ref{prop:linfty_decay}, there exists a unique $f$ and $\p_t f$ such that
\begin{align}
   \Vert w\p_t f(t)\Vert_\infty + |w \p_t f(t)|_\infty & \lesssim e^{-\lambda t}\{\Vert w \p_t f_0\Vert_\infty +\sup_{s\leq t} e^{\lambda s} \Big\Vert \frac{w}{\langle v\rangle} \partial_t g(s)\Big\Vert_\infty+ \sup_{s\leq t}e^{\lambda s}|\langle v\rangle w \partial_t r(s)|_\infty \}. \label{linfty_inflow}
\end{align}

In order to apply \eqref{linfty_inflow}, we consider the following iteration sequence:
\begin{equation}\label{iteration}
\partial_t f^{\ell+1} + v\cdot \nabla_x  f^{\ell+1}+L(  f^{\ell+1}) = \Gamma(f_s, f^\ell)+\Gamma( f^\ell,f_s)+\Gamma( f^\ell, f^\ell).  
\end{equation}
The temporal derivative of \eqref{iteration} reads
\begin{equation}\label{iteration_p_t}
\p_t (\partial_t f^{\ell+1})+v\cdot \nabla_x \partial_t f^{\ell+1}+L( \partial_t f^{\ell+1}) =\partial_t [ \Gamma(f_s, f^\ell)+\Gamma( f^\ell,f_s)+\Gamma( f^\ell, f^\ell)],    
\end{equation}
with boundary condition and initial condition given by
\[\p_t f^{\ell+1}|_{\gamma_-} = P_\gamma \p_tf^{\ell+1}+\frac{M_W(x,v) - \mu}{\sqrt{\mu}}\int_{n(x)\cdot u>0}\partial_t f^{\ell}\sqrt{\mu}\{n(x)\cdot u\} \dd u,\] 
\[\partial_t f^{\ell+1}(0,x,v) = \partial_t f_0(x,v).\] 
The initial sequence is defined to be the initial condition $\partial_t f^0(t,x,v) = \p_t f_0(x,v)$.

We first check all assumptions for \eqref{linfty_inflow} hold and then apply \eqref{linfty_inflow} to the iteration sequence \eqref{iteration_p_t}.

We check the condition $\iint \partial_t f_0\sqrt{\mu}=0$. From the definition of $\partial_t f_0(x,v)$ in~\eqref{partial_t_f0}, we apply Green's identity to have
\begin{align*}
  &\iint v\cdot \nabla_x f_0 \sqrt{\mu} \\
  & = \int_{n(x)\cdot v>0} [n(x)\cdot v] f_0 \sqrt{\mu} + \int_{n(x)\cdot v<0} [n(x)\cdot v] f_0 \sqrt{\mu} \\
  & =  \int_{n(x)\cdot v>0} [n(x)\cdot v] f_0 \sqrt{\mu} + \int_{n(x)\cdot v<0} [n(x)\cdot v]\sqrt{\mu} \frac{M_W(x,v)}{\sqrt{\mu(v)}} \int_{n(x)\cdot u>0} f_0(x,u)[n(x)\cdot u] \sqrt{\mu(u)}\\
  & =  \int_{n(x)\cdot v>0} [n(x)\cdot v] f_0 \sqrt{\mu} - \int_{n(x)\cdot u>0} f_0(x,u)[n(x)\cdot u] \sqrt{\mu(u)}=0.
\end{align*}
In the third line, we applied the compatibility condition~\eqref{compatibility} for $f_0$ and used 
\[\int_{n(x)\cdot v<0} M_W(x,v)[ n(x)\cdot v]=-1.\]
It is clear from the mass conservation that $\iint [-L(f_0)+\Gamma(f_s,f_0)+\Gamma(f_0,f_s)+\Gamma(f_0,f_0)]\sqrt{\mu} =0$. These estimates lead to the desired condition $\iint \partial_t f_0(x,v) \sqrt{\mu} = 0$.

For $\p_t r$ and $\p_t g$, we have that
\[\int_{v\in \mathbb{R}^3} \frac{M_W(x,v)-\mu}{\sqrt{\mu}}\sqrt{\mu} [n(x)\cdot v] = 0,\]
and again from mass conservation,
\[\iint \partial_t  [ \Gamma(f_s, f^\ell)+\Gamma( f^\ell,f_s)+\Gamma( f^\ell, f^\ell)]  \sqrt{\mu}  =0.\]
The conditions for~\eqref{linfty_inflow} are satisfied. We apply~\eqref{linfty_inflow} to \eqref{iteration_p_t} and have
\begin{align}
   &\Vert w\partial_t f^{\ell+1}(t)\Vert_\infty + |w\partial_t f^{\ell+1}(t)|_\infty \notag\\
   & \lesssim e^{-\lambda t}\Big\{\Vert w\partial_t f_0\Vert_\infty+ \sup_{s\leq t}e^{\lambda s}\Big|\langle v\rangle w \frac{M_W(x,v)-\mu}{\sqrt{\mu}}\int_{n(x)\cdot u>0}\partial_t f^{\ell}(s)\sqrt{\mu}\{n(x)\cdot u\}\Big|_\infty   \label{bdr_ell}\\
   & \ \ \ \ \ \ + \sup_{s\leq t}e^{\lambda s}\Big[\Big\Vert  \frac{w}{\langle v\rangle}\partial_t \Gamma(f_s,f^{\ell})(s)\Big\Vert_\infty +\Big\Vert  \frac{w}{\langle v\rangle}\partial_t \Gamma(f^{\ell},f_s)(s)\Big\Vert_\infty+\Big\Vert  \frac{w}{\langle v\rangle}\partial_t \Gamma(f^{\ell},f^{\ell})(s)\Big\Vert_\infty\Big]\Big\} . \notag
\end{align}
For the third line in \eqref{bdr_ell}, with $w=e^{\theta|v|^2}$, by~\eqref{Gamma bounded} and~\eqref{infty_bound} we have
{\color{black}\begin{align*}
  \Big\Vert \frac{w}{\langle v\rangle}\partial_t \Gamma(f^\ell,f_s)(s)\Big\Vert_\infty  &= \Big\Vert \frac{w}{\langle v\rangle}\Gamma(\partial_t f^\ell(s),f_s)\Big\Vert_\infty \lesssim \Vert wf_s\Vert_\infty \Vert w\partial_t f^\ell(s) \Vert_\infty = o(1)\Vert w\partial_t f^\ell(s)\Vert_\infty.
  \end{align*}
Similarly
\[  \Big\Vert \frac{w}{\langle v\rangle}\partial_t \Gamma(f_s,f^\ell)(s)\Big\Vert_\infty  \lesssim o(1) \Vert w\partial_t f^\ell(s) \Vert_\infty,\]
and by~\eqref{dym_stability},
\begin{align*}
 \Big\Vert \frac{w}{\langle v\rangle}\partial_t \Gamma(f^\ell,f^\ell)(s)\Big\Vert_\infty   & \lesssim \Big\Vert \frac{w}{\langle v\rangle}\Gamma(\partial_t f^\ell,f^\ell)(s)\Big\Vert_\infty + \Big\Vert \frac{w}{\langle v\rangle}\Gamma(f^\ell,\partial_t f^\ell)(s)\Big\Vert_\infty \\
    &\lesssim \Vert wf^\ell(s)\Vert_\infty\Vert w\partial_t f^\ell(s)\Vert_\infty=o(1)\Vert w\partial_t f^\ell(s)\Vert_\infty.
\end{align*}}

To deal with the boundary term in the second line of \eqref{bdr_ell}, with $\theta<\frac{1}{4}$, there exists $\delta$ such that $\theta<\frac{1}{4}-\delta$. Then we have
\begin{align*}
 &\langle v\rangle w\frac{M_W(x,v)-\mu}{\sqrt{\mu}}  \\
 & =\langle v\rangle e^{\theta|v|^2} \frac{e^{-|v|^2/2T_W(x)}/T_W(x) - e^{-|v|^2/2}}{e^{-|v|^2/4}}  \\
 & =\langle v\rangle e^{(\theta+\frac{1}{4})|v|^2}\times \Big[\frac{1}{T_W(x)}\big(e^{-\frac{|v|^2}{2T_W(x)}} - e^{-\frac{|v|^2}{2}} \big) + e^{-\frac{|v|^2}{2}}\big(\frac{1}{T_W(x)}-1 \big) \Big]\\
 &\lesssim \langle v\rangle e^{(\frac{1}{2}-\delta)|v|^2}\Big[ e^{-\frac{|v|^2}{2}}\Vert T_W-1\Vert_{L^\infty(\p\O)} + \frac{|v|^2}{2} \big|\frac{1}{T_W(x)} -1 \big|e^{-\frac{|v|^2}{2\inf \{ T_W(x)\}}} \Big] \\
 & \lesssim \Vert T_W-1\Vert_{L^\infty(\p\O)} \langle v\rangle[1+|v|^2] e^{(\frac{1}{2}-\delta-\frac{1}{2\inf \{T_W(x)\}})|v|^2} \\
 &\lesssim  \Vert T_W-1\Vert_{L^\infty(\p\O)} \frac{1}{\delta} e^{(\frac{1}{2}-3\delta-\frac{1}{2\inf \{T_W(x)\}})|v|^2} = o(1).
\end{align*}
In the fourth line, we applied mean value theorem. In the last line we used the Taylor expansion of $e^{x}$ to have $|v|^2 \leq \frac{e^{\delta|v|^2}}{\delta}$ and thus
\[\langle v\rangle[1+|v|^2] \lesssim 1+|v|^4\lesssim \frac{e^{2\delta |v|^2}}{\delta}.\] 
Also we take $\Vert T_W-1\Vert_{L^\infty(\p\O)}$ to be small enough such that $\frac{\Vert T_W-1\Vert_{L^\infty(\p\O)}}{\delta} \ll 1$ and $\frac{1}{2}-3\delta - \frac{1}{2\inf\{T_W(x)\}}<0$.

Then we control the boundary term in the second line \eqref{bdr_ell} as
\[\Big| \langle v\rangle w \frac{M_W(x,v)-\mu}{\sqrt{\mu}}\int_{n(x)\cdot u>0}\partial_t f^{\ell}(s)\sqrt{\mu}\{n(x)\cdot u\}\Big|_\infty\lesssim o(1) | w\partial_t f^\ell(s)|_\infty.\]

Hence we obtain the $L^\infty$ estimate for $\p_t f^{\ell+1}$ in \eqref{bdr_ell} as
\begin{align}
   &\Vert w\partial_t f^{\ell+1}(t)\Vert_\infty + |w\partial_t f^{\ell+1}(t)|_\infty \notag\\
   & \lesssim e^{-\lambda t}\{\Vert w\partial_t f_0\Vert_\infty + o(1)\sup_{s\leq t}e^{\lambda s}\Vert w\partial_t f^\ell(s)\Vert_\infty + o(1)\sup_{s\leq t}e^{\lambda s}| w\partial_t f^\ell(s)|_\infty \}. \label{ell+1}
\end{align}
This is equivalent to the fact that, for some $C>1$ and $c\ll 1$ and $c\times C\ll 1$, we have
\begin{align*}
   &\Vert w\partial_t f^{\ell+1}(t)\Vert_\infty + |w\partial_t f^{\ell+1}(t)|_\infty \\
   & \leq C e^{-\lambda t}\{\Vert w\partial_t f_0\Vert_\infty + c\sup_{s\leq t}e^{\lambda s}\Vert w\partial_t f^\ell(s)\Vert_\infty + c\sup_{s\leq t}e^{\lambda s}| w\partial_t f^\ell(s)|_\infty \}.
\end{align*}

Next we use induction to prove that, for all $\ell\geq 0$,
\begin{align*}
  \Vert w\partial_t f^{\ell}(t)\Vert_\infty + |w\partial_t f^{\ell}(t)|_\infty  & \leq 2Ce^{-\lambda t}\Vert w\partial_t f_0\Vert_\infty.
\end{align*}
Since the initial sequence is given by $\p_t f^0(t,x,v)=\p_t f_0(x,v)$, we have $\Vert w\partial_t f^{1}(t)\Vert_\infty + |w\partial_t f^{1}(t)|_\infty \leq Ce^{-\lambda t}\Vert w\p_t f_0\Vert_\infty$.

Assume the induction hypothesis holds, i.e,
\[  \Vert w\partial_t f^{\ell}(t)\Vert_\infty + |w\partial_t f^{\ell}(t)|_\infty   \leq 2Ce^{-\lambda t}\Vert w\partial_t f_0\Vert_\infty.\]
Applying \eqref{ell+1} to $\p_t f^{\ell+1}$ yields
\begin{align*}
   &\Vert w\partial_t f^{\ell+1}(t)\Vert_\infty + |w\partial_t f^{\ell+1}(t)|_\infty \\
   & \leq C e^{-\lambda t}\{\Vert w\partial_t f_0\Vert_\infty + 2Cc\Vert w\partial_t f_0\Vert_\infty  \} \leq 2C e^{-\lambda t}\Vert w\p_t f_0\Vert_\infty.
\end{align*}

The induction is valid and we conclude that for all $\ell$,
\begin{align*}
    & \sup_t e^{\lambda t}\Vert w\partial_t f^{\ell+1}(t)\Vert_\infty + \sup_t e^{\lambda t}|w\partial_t f^{\ell+1}(t)|_\infty \lesssim \Vert w\partial_t f_0\Vert_\infty.  
\end{align*}

With the uniform-in-$\ell$ estimate, it is standard to take the difference $\p_t f^{\ell+1}-\p_t f^{\ell}$ to conclude that $\p_t f^{\ell}$ is Cauchy sequence in $L^\infty$ and then pass the limit to conclude Corollary \ref{corollary:time_derivative}.

\end{proof}

\ \\

\subsection{Preparation for~\eqref{nabla_x_4}.} \label{sec:parp_4}
For the estimate of the derivative to~\eqref{nabla_x_4} at the boundary, we will use the boundary condition~\eqref{f_bc} with some properties from the convexity~\eqref{convex} in the following two lemmas.

\begin{lemma}\label{Lemma: nv<v2}
Given a $C^2$ convex domain in~\eqref{convex},
\Be\label{nv<v2}
\begin{split}
|n_{p^{j}} (\mathbf{x}_{p^{j}} ^{j}) \cdot  (x^{1} -
 \eta_{p^{2}} (\mathbf{x}_{p^{2}} ^{2})
 )| \thicksim  |x^{1} -
 \eta_{p^{2}} (\mathbf{x}_{p^{2}} ^{2})
 |^2,
 \ \  &j=1,2,
 \\
  {|\mathbf{v}^{1}_{p^{1}, 3}|  }  / {|\mathbf{v}^{1}_{p^{1}  }|}  \thicksim |x^{1} -
 \eta_{p^{2}} (\mathbf{x}_{p^{2}} ^{2})
 |.
 \end{split}
\Ee
For $j'=1,2$,
\Be\label{bound_vb_x}
  \bigg| \frac{\p[ n_{p^{j}} (\mathbf{x}_{p^{j}} ^{j}) \cdot  (x^{1} -
 \eta_{p^{2}} (\mathbf{x}_{p^{2}} ^{2})
 )]}{\p {\mathbf{x}_{p^{2},j^\prime}^{2}}}  \bigg| \lesssim \| \eta \|_{C^2}
 |x^{1} -
 \eta_{p^{2}} (\mathbf{x}_{p^{2}} ^{2})|
 ,
  \ \ j=1,2.
\Ee
\end{lemma}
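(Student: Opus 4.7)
The plan is to reduce all three claims to two standard facts: (i) strict convexity of $\xi$ plus the boundary conditions $\xi(x^1)=\xi(x^2)=0$ give a second-order expansion; (ii) the tangent vectors $\partial_{j'}\eta_p$ are perpendicular to $n_p$ at $\mathbf{x}_{p,3}=0$.

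First, for the statement $|n_{p^j}(\mathbf{x}_{p^j}^j)\cdot(x^1-\eta_{p^2}(\mathbf{x}_{p^2}^2))|\thicksim |x^1-\eta_{p^2}(\mathbf{x}_{p^2}^2)|^2$, write $x^2:=\eta_{p^2}(\mathbf{x}_{p^2}^2)$. Since $\xi(x^1)=\xi(x^2)=0$, Taylor-expand $\xi$ at $x^2$ to get
\begin{equation*}
0 = \nabla\xi(x^2)\cdot(x^1-x^2) + \tfrac{1}{2}(x^1-x^2)\cdot \nabla^2\xi(y)\cdot(x^1-x^2)
\end{equation*}
for some $y$ on the segment. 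Using the definition \eqref{normal} of the unit normal and the strict convexity bound \eqref{convex} $\zeta\cdot\nabla^2\xi\cdot\zeta\thicksim|\zeta|^2$, I obtain $|n(x^2)\cdot(x^1-x^2)|\thicksim|x^1-x^2|^2$, which is the case $j=2$. The case $j=1$ is identical after expanding $\xi$ at $x^1$ instead.

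Second, for the velocity ratio, the characteristic relation $x^2=x^1-\tb^1 v^1$ (see Definition~\ref{definition: sto cycle}) yields $v^1=(x^1-x^2)/\tb^1$. From \eqref{bar_v} and the orthogonality \eqref{orthogonal} at $\mathbf{x}_{p^1,3}=0$, together with \eqref{normal: p_3}, the matrix $T_{\mathbf{x}_{p^1}^1}$ is orthogonal at the boundary, so $|\mathbf{v}^1_{p^1}|=|v^1|$ and $\mathbf{v}^1_{p^1,3}$ is proportional to $n(x^1)\cdot v^1$. Dividing and applying the first estimate with $j=1$ gives
\begin{equation*}
\frac{|\mathbf{v}^1_{p^1,3}|}{|\mathbf{v}^1_{p^1}|}\thicksim \frac{|n(x^1)\cdot(x^1-x^2)|}{|x^1-x^2|}\thicksim |x^1-x^2|.
\end{equation*}

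Third, to prove \eqref{bound_vb_x}, set $A_j:=n_{p^j}(\mathbf{x}_{p^j}^j)\cdot(x^1-\eta_{p^2}(\mathbf{x}_{p^2}^2))$ and differentiate in $\mathbf{x}_{p^2,j'}^2$ for $j'\in\{1,2\}$. When $j=1$ only the second factor depends on $\mathbf{x}_{p^2}^2$, yielding $-n_{p^1}\cdot\partial_{j'}\eta_{p^2}$; adding and subtracting $n_{p^2}\cdot\partial_{j'}\eta_{p^2}=0$ (orthogonality at $\mathbf{x}_{p^2,3}=0$), this equals $(n_{p^2}-n_{p^1})\cdot\partial_{j'}\eta_{p^2}$, which, since $n$ depends $C^1$ on its position and $\|\partial_{j'}\eta\|_\infty\lesssim\|\eta\|_{C^1}$, is bounded by $\|\eta\|_{C^2}|x^1-x^2|$. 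When $j=2$, the product rule gives $(\partial_{j'}n_{p^2})\cdot(x^1-x^2) - n_{p^2}\cdot\partial_{j'}\eta_{p^2}$; the second term vanishes by orthogonality and the first is bounded by $\|\eta\|_{C^2}|x^1-x^2|$ directly.

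There is no real obstacle here—the work is bookkeeping through the local chart. The only point requiring care is consistently using \eqref{normal: p_3} and \eqref{orthogonal} to pass between the chart components $\mathbf{v}^k_{p^k,i}$ and the Euclidean normal/tangent decomposition. Once that identification is made, the convexity inequality \eqref{convex} does all the work for the quadratic lower and upper bounds.
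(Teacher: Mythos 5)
Your proposal is correct and follows essentially the same route as the paper's proof: a Taylor expansion of $\xi$ at a boundary point for the quadratic equivalence, the identity $x^1-x^2=\tb^1 v^1$ together with the orthonormality of the chart at the boundary for the velocity ratio, and orthogonality of $n_p$ to the tangent vectors $\partial_{j'}\eta_p$ for the derivative bound. The only (cosmetic) difference is in the $j=1$ case of \eqref{bound_vb_x}: you subtract and add $n_{p^2}\cdot\partial_{j'}\eta_{p^2}=0$ and use the $C^1$ modulus of the normal field, whereas the paper subtracts and adds $n_{p^1}\cdot\partial_{j'}\eta_{p^1}=0$ and uses the $C^1$ modulus of the tangent frame — both yield the same $\|\eta\|_{C^2}|x^1-\eta_{p^2}(\mathbf{x}_{p^2}^2)|$ bound.
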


\begin{proof}
First we prove~\eqref{nv<v2}. By Taylor's expansion, for $x,y\in \partial \Omega$ and some $0\leq t\leq 1$,
\begin{align*}
  \xi(y)-\xi(x)  & =0-0=\nabla \xi(x)\cdot (y-x) + \frac{1}{2}(y-x)^T \nabla^2 \xi(x+t(y-x)) (y-x).
\end{align*}
 Thus from~\eqref{normal}
\[ |n(x)\cdot (x-y)| \sim  (y-x)^T \nabla^2 \xi(x+t(y-x)) (y-x).  \]

From the convexity~\eqref{convex}, we have
\[|n_{p^{j}} (\mathbf{x}_{p^{j}} ^{j}) \cdot  (x^{1} -
 \eta_{p^{2}} (\mathbf{x}_{p^{2}} ^{2})
 )|\geq C_\Omega |x^{1} -
 \eta_{p^{2}} (\mathbf{x}_{p^{2}} ^{2})
 |^2.\]
 Since $\xi$ is $C^2$ at least,
\[|\{x^{1}-y\}\cdot n(x^{1})|\leq \Vert \xi\Vert_{C^2}|x^{1}-y|^2.\]

Also, notice that
\[|n_{p^{1}} (\mathbf{x}_{p^{1}} ^{1}) \cdot  (x^{1} -
 \eta_{p^{2}} (\mathbf{x}_{p^{2}} ^{2})
 )|=|\mathbf{v}^{1}_{p^{1}, 3}| (t^{2}-t^{1}),\]
thus
\begin{equation*}
  \begin{split}
    \frac{|\mathbf{v}^{1}_{p^{1}, 3}|  }{|\mathbf{v}^{1}_{p^{1}  }|}\geq  &  \frac{1}{|\mathbf{v}^{1}_{p^{1}  }|}\frac{C_\Omega}{|t^{1}-t^{2}|}\Big|x^{1}-x^{2} \Big|^2\\
     = & C_\Omega|x^{1}-x^{2}|
     =  C_\Omega |x^{1}-\eta_{p^{2}}(\mathbf{x}_{p^{2}}^{2})|.
  \end{split}
\end{equation*}
By the same computation, we can conclude
\[\frac{|\mathbf{v}^{1}_{p^{1}, 3}|  }{|\mathbf{v}^{1}_{p^{1}  }|} \leq C_\xi |x^{1}-\eta_{p^{2}}(\mathbf{x}_{p^{2}}^{2})| .\]

Then we prove~\eqref{bound_vb_x}. For $j=1$, $j'=1,2$ we apply~\eqref{orthogonal} and~\eqref{normal at xk} to obtain
\begin{equation}\label{tang*nor}
\begin{split}
   & \bigg| \frac{\p[ n_{p^{1}} (\mathbf{x}_{p^{1}} ^{1}) \cdot  (x^{1} -
 \eta_{p^{2}} (\mathbf{x}_{p^{2}} ^{2})
 )]}{\p {\mathbf{x}_{p^{2},j^\prime}^{2}}}  \bigg|\leq |n_{p^{1}} (\mathbf{x}_{p^{1}} ^{1}) \cdot \partial_{j'}\eta_{p^{2}}(\mathbf{x}_{p^{2}} ^{2})| \\
   & =\Big|n_{p^{1}} (\mathbf{x}_{p^{1}} ^{1}) \cdot \partial_{j'}\eta_{p^{1}}(\mathbf{x}_{p^{1}} ^{1})+n_{p^{1}} (\mathbf{x}_{p^{1}} ^{1}) \cdot \big[\partial_{j'}\eta_{p^{2}}(\mathbf{x}_{p^{2}} ^{2})-\partial_{j'}\eta_{p^{1}}(\mathbf{x}_{p^{1}} ^{1})\big]\Big|\\
   & \leq 0+ \Vert \eta\Vert_{C^2} |x^{1} -
 \eta_{p^{2}} (\mathbf{x}_{p^{2}} ^{2})|.
\end{split}
\end{equation}

For $j=2$, we again apply~\eqref{orthogonal} and~\eqref{normal at xk} to have
\begin{equation*}
  \begin{split}
     \bigg| \frac{\p[ n_{p^{2}} (\mathbf{x}_{p^{2}} ^{2}) \cdot  (x^{1} -
 \eta_{p^{2}} (\mathbf{x}_{p^{2}} ^{2})
 )]}{\p {\mathbf{x}_{p^{2},j^\prime}^{2}}}  \bigg|\lesssim &  |n_{p^{2}} (\mathbf{x}_{p^{2}} ^{2}) \cdot \partial_{j'}\eta_{p^{2}}|+\Vert \eta\Vert_{C^2}|x^{1} -
 \eta_{p^{2}} (\mathbf{x}_{p^{2}} ^{2})| \\
     = & \Vert \eta\Vert_{C^2}|x^{1} -
 \eta_{p^{2}} (\mathbf{x}_{p^{2}} ^{2})|.
  \end{split}
\end{equation*}

\end{proof}

\begin{lemma}\label{Lemma: change of variable} 
The following map is one-to-one
\Be\label{map_v_to_xbtb}
v^{1} \in   \{ n(x^{1}) \cdot v^{1} >0: \xb(x^{1},v^{1}) \in B(p^{2}, \delta_2)\} \mapsto
(\mathbf{x}^{2}_{p^{2},1}, \mathbf{x}^{2}_{p^{2},2}, \tb^{1}),
\Ee
with
\Be\label{jac_v_to_xbtb}
\det\left(\frac{\p (\mathbf{x}^{2}_{p^{2},1}, \mathbf{x}^{2}_{p^{2},2}, \tb^{1})}{\p v^{1}}\right)= \frac{1}{\sqrt{ g_{p^{2},11}(\mathbf{x}^{2}_{p^{2}})  g_{p^{2},22}(\mathbf{x}_{p^{2}}^{2}) }}
\frac{|\tb^{1}|^3}{  |n(x^{2}) \cdot v^{1}| }.
\Ee

\end{lemma}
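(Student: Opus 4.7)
My plan is to bypass the direct assembly of the $3\times 3$ Jacobian from the derivative formulas in Lemma~\ref{lemma:tb_xb_x}, and instead exhibit an explicit inverse map, which gives both the injectivity and the determinant in one short computation.

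For injectivity, I view $x^1$ and the chart $\eta_{p^2}$ as fixed. Since by construction $x^2 = x^1 - \tb^1 v^1$, any triple $(\mathbf{x}^{2}_{p^{2},1},\mathbf{x}^{2}_{p^{2},2},\tb^1)$ in the image (in particular $\tb^1>0$) determines the boundary point $x^2 = \eta_{p^2}(\mathbf{x}^{2}_{p^{2},1},\mathbf{x}^{2}_{p^{2},2},0)\in \p\O$, and then
\[
v^1 = \frac{x^1 - \eta_{p^2}(\mathbf{x}^{2}_{p^{2},1},\mathbf{x}^{2}_{p^{2},2},0)}{\tb^1}.
\]
This closed-form expression is a well-defined inverse, so the map is one-to-one onto its image.

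Next I compute the Jacobian of the inverse $\Phi^{-1}:(\mathbf{x}^{2}_{p^{2},1},\mathbf{x}^{2}_{p^{2},2},\tb^1)\mapsto v^1$ directly. Differentiating the displayed formula gives
\[
\frac{\p v^1}{\p \mathbf{x}^{2}_{p^{2},i}} = -\frac{\p_i \eta_{p^2}(\mathbf{x}^{2}_{p^{2}})}{\tb^1}\ (i=1,2), \qquad \frac{\p v^1}{\p \tb^1} = -\frac{x^1-x^2}{(\tb^1)^2} = -\frac{v^1}{\tb^1}.
\]
Collecting these as the columns of a $3\times 3$ matrix,
\[
\det D\Phi^{-1} = -\frac{1}{(\tb^1)^3}\det\bigl[\p_1 \eta_{p^2}(\mathbf{x}^{2}_{p^{2}}),\ \p_2 \eta_{p^2}(\mathbf{x}^{2}_{p^{2}}),\ v^1\bigr].
\]

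The remaining step is to evaluate this triple scalar product. By the orthogonality \eqref{orthogonal} together with \eqref{normal: p_3}, at $\mathbf{x}^{2}_{p^{2},3}=0$ the vectors $\p_1 \eta_{p^2}$ and $\p_2 \eta_{p^2}$ are mutually orthogonal, have lengths $\sqrt{g_{p^2,11}}$ and $\sqrt{g_{p^2,22}}$, and are both perpendicular to $n(x^2)$. Hence $\p_1 \eta_{p^2}\times \p_2 \eta_{p^2} = \pm\sqrt{g_{p^2,11}\, g_{p^2,22}}\, n(x^2)$, so
\[
\bigl|\det[\p_1 \eta_{p^2},\p_2 \eta_{p^2},v^1]\bigr| = \sqrt{g_{p^2,11}\, g_{p^2,22}}\,|n(x^2)\cdot v^1|.
\]
Taking absolute values and using $|\det D\Phi| = 1/|\det D\Phi^{-1}|$ gives the stated formula. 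The only mildly subtle point is the sign in the cross-product identity, which depends on the orientation of the chart $\eta_{p^2}$; since the conclusion involves only the absolute value of the determinant (as is needed for change of variables in integrals), this orientation ambiguity is immaterial.
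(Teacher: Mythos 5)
Your proof is correct and takes a genuinely different route from the paper's. The paper assembles the $3\times 3$ Jacobian $\partial(\mathbf{x}^2_{p^2,1},\mathbf{x}^2_{p^2,2},\tb^1)/\partial v^1$ directly from the differentiation formulas \eqref{vi deri tb} and \eqref{vi deri xbp} of Lemma~\ref{lemma:tb_xb_x}, evaluates the determinant using the orthogonality \eqref{orthogonal}, and then proves injectivity by contradiction: if two velocities had the same image, a mean-value-theorem argument combined with the nonvanishing of the computed determinant would force $\tb^1=0$, which is excluded. You instead observe that $v^1=(x^1-\eta_{p^2}(\mathbf{x}^2_{p^2}))/\tb^1$ is an explicit left inverse (so injectivity is immediate) and compute the Jacobian of that inverse, which has a trivially simple closed form, then invert. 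Both arrive at the same absolute value of the determinant, and your cross-product evaluation at $\mathbf{x}^2_{p^2,3}=0$ matches the paper's use of \eqref{orthogonal}. Your route is shorter and sidesteps the paper's mean-value-theorem step, which as written quietly applies a scalar MVT to a vector-valued map and would need a componentwise or determinantal refinement to be fully rigorous; your argument needs no such care. The only thing worth flagging is that you treat $\Phi^{-1}$ as a genuine two-sided inverse when composing Jacobians, which uses that the straight segment from $x^1$ to $x^2$ stays in $\bar\Omega$ so that the constructed $v^1$ really satisfies $\xb(x^1,v^1)=x^2$ and $\tb(x^1,v^1)=\tb^1$; this is exactly where strict convexity \eqref{convex} enters and is worth saying explicitly, but it is certainly true in the setting of the paper.
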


\begin{proof}
Combining~\eqref{vi deri tb} and~\eqref{vi deri xbp} we conclude
\begin{equation*}
  \begin{split}
  &\det\left(\frac{\p (\mathbf{x}^{2}_{p^{2},1}, \mathbf{x}^{2}_{p^{2},2}, \tb^{1})}{\p v^{1}}\right)\\
      &= |\tb^{1}|^3\det\left(
                            \begin{array}{c}
                                       -\frac{1}{\mathbf{v}^{1}_{p^{2},3}} \frac{\partial_3 \eta_{p^{2}}}{\sqrt{g_{p^{2},33}}}\Big|_{x^{2}} \\
                                       \frac{1}{\sqrt{g_{p^{2},11}(\mathbf{x}_{p^{2}}^{2})}} \Big[\frac{\partial_1 \eta_{p^{2}}}{\sqrt{g_{p^{2},11}}}\Big|_{x^{2}}- \frac{\mathbf{v}_{p^{1},1}^{1}}{\mathbf{v}_{p^{2},3}^{1}}\frac{\partial_3 \eta_{p^{2}}}{\sqrt{g_{p^{2},33}}}\Big|_{x^{2}} \Big] \\
                                                                              \frac{1}{\sqrt{g_{p^{2},22}(\mathbf{x}_{p^{2}}^{2})}} \Big[\frac{\partial_2 \eta_{p^{2}}}{\sqrt{g_{p^{2},22}}}\Big|_{x^{2}}- \frac{\mathbf{v}_{p^{1},2}^{1}}{\mathbf{v}_{p^{2},3}^{1}}\frac{\partial_3 \eta_{p^{2}}}{\sqrt{g_{p^{2},33}}}\Big|_{x^{2}} \Big] \\
                                     \end{array}
                                   \right) \\
       &=  -|\tb^{1}|^3\frac{1}{\mathbf{v}^{1}_{p^{2},3}}    \frac{1}{\sqrt{g_{p^{2},11}(\mathbf{x}_{p^{2}}^{2})g_{p^{2},22}(\mathbf{x}_{p^{2}}^{2})}} \frac{\partial_3 \eta_{p^{2}}}{\sqrt{g_{p^{2},33}}}\Big|_{x^{2}}   \\
        &\quad \cdot \bigg(\Big[\frac{\partial_1 \eta_{p^{2}}}{\sqrt{g_{p^{2},11}}}\Big|_{x^{2}}- \frac{\mathbf{v}_{p^{1},1}^{1}}{\mathbf{v}_{p^{2},3}^{1}}\frac{\partial_3 \eta_{p^{2}}}{\sqrt{g_{p^{2},33}}}\Big|_{x^{2}} \Big]\times \Big[\frac{\partial_2 \eta_{p^{2}}}{\sqrt{g_{p^{2},22}}}\Big|_{x^{2}}- \frac{\mathbf{v}_{p^{1},2}^{1}}{\mathbf{v}_{p^{2},3}^{1}}\frac{\partial_3 \eta_{p^{2}}}{\sqrt{g_{p^{2},33}}}\Big|_{x^{2}} \Big] \bigg)\\
      & =\frac{1}{\sqrt{g_{p^{2},11}(\mathbf{x}_{p^{2}}^{2})g_{p^{2},22}(\mathbf{x}_{p^{2}}^{2})}}\frac{|\tb^{1}|^3}{\mathbf{v}^{1}_{p^{2},3}}=~\eqref{jac_v_to_xbtb},
   \end{split}
\end{equation*}
where we have used ~\eqref{orthogonal}.

Now we prove the map~\eqref{map_v_to_xbtb} is one-to-one. Assume that there exists $v$ and $\tilde{v}$ satisfy $\xb(x^{1},v)=\xb(x^{1},\tilde{v})$ and $\tb(x^{1},v)=\tb(x^{1},\tilde{v})$. We choose $p\in \partial \Omega$ near $\xb(x^{1},v)$ and use the same parametrization. Then by an expansion, for some ${\color{black}\bar{v}\in \{a\tilde{v}+(1-a)v: a\in [0,1]\},}$
\[0=\left(
      \begin{array}{c}
         \mathbf{x}_{p,1}(x^{1},\tilde{v})    \\
          \mathbf{x}_{p,2}(x^{1},\tilde{v})\\
         \tb(x^{1},\tilde{v})  \\
      \end{array}
    \right)-\left(
              \begin{array}{c}
               \mathbf{x}_{p,1}(x^{1},v)    \\
                \mathbf{x}_{p,2}(x^{1},v)   \\
                \tb(x^{1},v)    \\
              \end{array}
            \right)=\left(
                      \begin{array}{c}
                        \nabla_v \mathbf{x}_{p,1}({\color{black} x^{1}},\bar{v}) \\
                      \nabla_v \mathbf{x}_{p,2}({\color{black} x^{1}},\bar{v})   \\
                  \nabla_v \tb({\color{black} x^{1}},\bar{v})       \\
                      \end{array}
                    \right)(\tilde{v}-v).
\]
This equality can be true only if the determinant of the Jacobian matrix equals zero. Then~\eqref{jac_v_to_xbtb} implies that $\tb(x^{1},\bar{v})=0$. But this implies $x^{1}=\xb(x^{1},\bar{v})$ and hence $n(x^{1})\cdot \bar{v}=0$ which is out of our domain.

\end{proof}

\ \\

\subsection{Preparation for~\eqref{nabla_x_7} and~\eqref{nabla_x_8}.}\label{sec:para_78}

Applying Lemma \ref{lemma:nonlocal_to_local}, we have the following estimate for~\eqref{nabla_x_7} and~\eqref{nabla_x_8} respectively:
\begin{lemma}\label{lemma:est_h}
Recall the definition of $h$ in~\eqref{h}. We have an $L^\infty$ control for $h$ as
\begin{equation}\label{h_bdd}
|w_{\tilde{\theta}}(v)h(t,x,v)|_\infty \lesssim e^{-\lambda t}\big[(\sup_t e^{\lambda t}| wf(t)|_\infty)^2 + \sup_t e^{\lambda t}| wf(t)|_\infty | wf_s|_\infty\big] \lesssim e^{-\lambda t} \sup_t e^{\lambda t}|wf(t)|_\infty. 
\end{equation}
And the derivative along the characteristic is bounded as
\begin{align}
    &\Big|\int^t_{\max\{0,t-\tb\}}e^{-\nu(t-s)} \nabla_x h(s,x-(t-s)v,v) \dd s \Big| \notag\\
    &\lesssim e^{-\lambda t}\Big[\sup_t e^{\lambda t}\Vert wf(t)\Vert_\infty \frac{\Vert w_{\tilde{\theta}}\alpha\nabla_x f_s\Vert_\infty}{w_{\tilde{\theta}}(v)\alpha(x,v)}  +\Vert wf_s\Vert_\infty \frac{\sup_{s\leq t}e^{\lambda s}\Vert w_{\tilde{\theta}}\alpha \nabla_x f(s) \Vert_\infty}{w_{\tilde{\theta}}(v)\alpha(x,v)}   \notag\\
    &  \ \ \ \ \ \ \  + \sup_t e^{\lambda t}\Vert wf(t)\Vert_\infty \frac{\sup_{s\leq t} e^{\lambda s}\Vert w_{\tilde{\theta}}\alpha\nabla_x f(s)\Vert_\infty}{w_{\tilde{\theta}}(v)\alpha(x,v)}\Big] \label{nabla_h_bdd}  \\
    & =\frac{o(1)e^{-\lambda t}}{w_{\tilde{\theta}}(v)\alpha(x,v)}\big[\sup_{s\leq t}e^{\lambda s}\Vert w_{\tilde{\theta}}\alpha\nabla_x f(s)\Vert_\infty + \Vert w_{\tilde{\theta}}\alpha\nabla_x f_s\Vert_\infty\big]\notag   .
\end{align}

\end{lemma}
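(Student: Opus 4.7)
The plan for \eqref{h_bdd} is straightforward. Expanding the definition of $h$ in \eqref{h} and applying the $L^\infty$ bound \eqref{Gamma bounded} on $\Gamma$ from Lemma \ref{lemma:k_gamma}, one obtains
\[
\Big|\tfrac{w}{\langle v\rangle} h(t,x,v)\Big|_\infty
\lesssim |wf_s|_\infty\, |wf(t)|_\infty + |wf(t)|_\infty^2.
\]
Since $\tilde{\theta} \ll \theta$, the factor $w_{\tilde{\theta}}(v)\langle v\rangle / w(v) = \langle v\rangle e^{(\tilde{\theta}-\theta)|v|^2}$ is uniformly bounded, so $|w_{\tilde{\theta}} h|_\infty$ has the same upper bound. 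The decay in $t$ then follows from \eqref{dym_stability} for $f(t)$ and from \eqref{infty_bound} for the time-independent quantity $|wf_s|_\infty = o(1)$.

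For \eqref{nabla_h_bdd} I would invoke \eqref{Gamma_est}, which gives $|\nabla_x \Gamma(f,g)|$ as a sum of a \emph{local} piece $\langle v\rangle \|wf\|_\infty |\nabla_x g(t,x,v)|$ and two \emph{nonlocal} pieces of the form $\|wf\|_\infty \int \mathbf{k}(v,u)|\nabla_x g(t,x,u)|\,\dd u$. For $h = \Gamma(f_s,f)+\Gamma(f,f_s)+\Gamma(f,f)$ each of $f_s$, $f$ will play the role of $f$ or $g$; the three resulting terms match the three contributions on the right of \eqref{nabla_h_bdd}.

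For the local piece, I would estimate pointwise
\[
|\nabla_x g(s,x-(t-s)v,v)| \leq \frac{\|w_{\tilde{\theta}}\alpha \nabla_x g\|_\infty}{w_{\tilde{\theta}}(v)\,\alpha(x-(t-s)v,v)},
\]
invoke the Velocity Lemma \eqref{Velocity_lemma} to replace $\alpha(x-(t-s)v,v)$ by $\alpha(x,v)$ up to a constant, and use $\nu \gtrsim \langle v\rangle$ from \eqref{nablav nu} so that
\[
\int^t_{\max\{0,t-\tb\}} e^{-\nu(t-s)}\langle v\rangle \,\dd s \lesssim 1.
\]
For the nonlocal piece, I would first insert the weight $w_{\tilde{\theta}}$ and use \eqref{k_theta} from Lemma \ref{Lemma: k tilde} to exchange $\mathbf{k}(v,u) w_{\tilde{\theta}}(v)/w_{\tilde{\theta}}(u)$ for $\mathbf{k}_{\tilde\varrho}(v,u)$, writing
\[
\int \mathbf{k}(v,u) |\nabla_x g(s,x-(t-s)v,u)|\,\dd u
 \lesssim \frac{\|w_{\tilde{\theta}}\alpha \nabla_x g\|_\infty}{w_{\tilde{\theta}}(v)} \int \frac{\mathbf{k}_{\tilde\varrho}(v,u)}{\alpha(x-(t-s)v,u)}\,\dd u.
\]
Then, bounding $e^{-\nu(t-s)} \lesssim e^{-C\langle v\rangle(t-s)}$, the nonlocal-to-local estimate \eqref{est:nonlocal_wo_e} yields the factor $1/\alpha(x,v)$.

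The decay in $t$ is obtained in the end by pulling out $\|wf(s)\|_\infty \lesssim e^{-\lambda s} \sup_s e^{\lambda s}\|wf(s)\|_\infty$ (similarly for the weighted gradient when $g = f$) and observing that, for $\lambda \ll 1 \leq \nu/2$, one has $\int^t_{t-\tb} e^{-\nu(t-s)} e^{-\lambda s}\,\dd s \lesssim e^{-\lambda t}/\langle v\rangle$, which combines with the extra $\langle v\rangle$ in \eqref{Gamma_est} to leave a clean $e^{-\lambda t}$. The last line of \eqref{nabla_h_bdd} then follows because $\|wf_s\|_\infty$ and $\sup_t e^{\lambda t}\|wf(t)\|_\infty$ are both $o(1)$ by \eqref{infty_bound} and \eqref{f0_stability}-\eqref{dym_stability}.

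The main technical point will be the nonlocal piece: one must carefully reconcile the weight $w_{\tilde{\theta}}$ (which changes the kernel via Lemma \ref{Lemma: k tilde}) with the requirement that the exponent in the nonlocal-to-local estimate \eqref{est:nonlocal_wo_e} is $e^{-C\langle v\rangle(t-s)}$ rather than $e^{-\nu(t-s)}$. Once one commits to $\tilde\varrho < \varrho - \tilde\theta/2$ as in \eqref{k_theta} and uses $\nu(v) \gtrsim \langle v\rangle$ from \eqref{nablav nu}, the rest of the estimate is a routine bookkeeping of the constants.
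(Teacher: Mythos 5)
Your proposal is correct and follows essentially the same route as the paper's proof: \eqref{Gamma bounded} for the $L^\infty$ bound on $h$, the decomposition from \eqref{Gamma_est} into a local piece (handled pointwise via $\|w_{\tilde{\theta}}\alpha\nabla_x g\|_\infty$, the velocity lemma, and $\nu\gtrsim\langle v\rangle$) and a nonlocal piece (handled via \eqref{k_theta} and the nonlocal-to-local estimate), with the time decay extracted from $e^{-\nu(t-s)}e^{-\lambda s}\leq e^{-\lambda t}e^{-\nu(t-s)/2}$ and the final $o(1)$ from \eqref{infty_bound} and \eqref{dym_stability}. The "technical point" you flag — matching $e^{-\nu(t-s)}$ with the $e^{-C\langle v\rangle(t-s)}$ exponent in Lemma \ref{lemma:nonlocal_to_local} — is handled exactly as you anticipate, using $\nu\gtrsim\langle v\rangle$.
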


\begin{proof}
First we prove~\eqref{h_bdd}. From the definition of $h(t,x,v)$ in~\eqref{h}, we apply~\eqref{Gamma bounded} with $w_{\tilde{\theta}}(v)\lesssim \frac{w(v)}{\langle v\rangle}$ to have
\begin{align*}
  |w_{\tilde{\theta}}(v)h(t,x,v)|_\infty  & \lesssim  |wf(t)|_\infty\times |wf_s|_\infty + |wf(t)|_\infty^2 \\
 & \lesssim e^{-\lambda t}\big[\sup_t e^{\lambda t}|wf(t)|_\infty \times |wf_s|_\infty + \sup_t e^{2\lambda t} |wf(t)|_\infty^2 \big].  
\end{align*}
This concludes the first inequality in \eqref{h_bdd}. The second inequality in~\eqref{h_bdd} follows from $|wf_s|_\infty\ll 1$ and $e^{\lambda t}|wf(t)|_\infty \ll 1$ from~\eqref{dym_stability} and~\eqref{infty_bound}.

Then we prove~\eqref{nabla_h_bdd}. First we consider the contribution of $\Gamma(f,f_s)$ and compute
\[\Big|\int^t_{\max\{0,t-\tb\}} e^{-\nu(t-s)}\nabla_x \Gamma(f,f_s)(s,x-(t-s)v,v) \dd s \Big|.\]
We bound $\nabla_x\Gamma(f,f_s)$ by~\eqref{Gamma_est}, the contribution of the first term on RHS of~\eqref{Gamma_est} is bounded as
\begin{align*}
    &   \int^t_{\max\{0,t-\tb\}}\dd s e^{-\nu(t-s)}    \Big[{\color{black}\langle v\rangle}\Vert wf(s)\Vert_\infty |\nabla_x f_s(x,v)|\Big] \\
    &  \leq  \int^t_{\max\{0,t-\tb\}}\dd s e^{-\nu(t-s)} \langle v\rangle\Big[ e^{-\lambda s} \sup_t e^{\lambda t}\Vert wf(t)\Vert_\infty  \frac{\Vert w_{\tilde{\theta}}\alpha\nabla_x f_s\Vert_\infty }{w_{\tilde{\theta}}(v)\alpha(x,v)}\Big] \\
    & \lesssim \frac{e^{-\lambda t}\sup_t e^{\lambda t}\Vert wf(t)\Vert_\infty \Vert w_{\tilde{\theta}}\alpha\nabla_x f_s\Vert_\infty}{w_{\tilde{\theta}}(v)\alpha(x,v)} \int^t_{\max\{0,t-\tb\}} e^{-\nu(t-s)/2} \langle v\rangle\dd s \\
    &\lesssim \frac{e^{-\lambda t}\sup_t e^{\lambda t}\Vert wf(t)\Vert_\infty \Vert w_{\tilde{\theta}}\alpha\nabla_x f_s\Vert_\infty}{w_{\tilde{\theta}}(v)\alpha(x,v)}.
\end{align*}
In the second line we used $\lambda\ll 1$ and $1\lesssim \nu(v)$ from~\eqref{nablav nu} so that $e^{-\nu(t-s)}e^{-\lambda s}\leq e^{-\lambda t}e^{-\nu(t-s)/2}$.

The contribution of the second term of the RHS of~\eqref{Gamma_est} is bounded as
\begin{align*}
    & \int^t_{\max\{0,t-\tb\}} \dd s e^{-\nu(t-s)} e^{-\lambda s}\sup_t e^{\lambda t}\Vert wf(t)\Vert_\infty \int_{\mathbb{R}^3} \dd u \mathbf{k}(v,u)\frac{\Vert w_{\tilde{\theta}}\alpha\nabla_x f_s \Vert_\infty}{w_{\tilde{\theta}}(u)\alpha(x-(t-s)v,u)}   \\
    & \lesssim \frac{e^{-\lambda t}\sup_t e^{\lambda t} \Vert wf(t)\Vert_\infty \Vert w_{\tilde{\theta}}\alpha\nabla_x f_s\Vert_\infty}{ w_{\tilde{\theta}}(v)} \int^t_{\max\{0,t-\tb\}}e^{-\frac{\nu(t-s)}{2}} \int_{\mathbb{R}^3}  \frac{\mathbf{k}(v,u)w_{\tilde{\theta}}(v)}{w_{\tilde{\theta}}(u)\alpha(x-(t-s)v,u)} \dd u \dd s \\
    &\lesssim \frac{e^{-\lambda t}\sup_t e^{\lambda t} \Vert wf(t)\Vert_\infty \Vert w_{\tilde{\theta}}\alpha\nabla_x f_s\Vert_\infty}{w_{\tilde{\theta}}(v)} \int^t_{\max\{0,t-\tb\}}e^{-\frac{\nu(t-s)}{2}} \int_{\mathbb{R}^3}  \frac{\mathbf{k}_{\tilde{\varrho}}(v,u)}{\alpha(x-(t-s)v,u)} \dd u \dd s\\
    &\lesssim \frac{ e^{-\lambda t}\sup_t e^{\lambda t} \Vert wf(t)\Vert_\infty \Vert w_{\tilde{\theta}}\alpha\nabla_x f_s\Vert_\infty }{w_{\tilde{\theta}}(v)\alpha(x,v)}.
\end{align*}
In the third line, we have applied \eqref{k_theta} Lemma \ref{Lemma: k tilde}. In the last line, we have applied Lemma \ref{lemma:nonlocal_to_local}.

By a similar computation, the contribution of the third term of the RHS of~\eqref{Gamma_est} is bounded as
\begin{align*}
    & \int^t_{\max\{0,t-\tb\}} \dd s e^{-\nu(t-s)} e^{-\lambda s}\Vert wf_s\Vert_\infty \int_{\mathbb{R}^3} \dd u \mathbf{k}(v,u)\frac{e^{\lambda s}\Vert w_{\tilde{\theta}}\alpha\nabla_x f(s) \Vert_\infty}{w_{\tilde{\theta}}(u)\alpha(x-(t-s)v,u)}   \\
    & \lesssim \frac{e^{-\lambda t}\Vert wf_s\Vert_\infty \sup_{s\leq t}e^{\lambda s}\Vert w_{\tilde{\theta}}\alpha \nabla_x f(s)\Vert_\infty}{ w_{\tilde{\theta}}(v)} \int^t_{\max\{0,t-\tb\}}e^{-\frac{\nu(t-s)}{2}} \int_{\mathbb{R}^3}  \frac{\mathbf{k}(v,u)w_{\tilde{\theta}}(v)}{w_{\tilde{\theta}}(u)\alpha(x-(t-s)v,u)} \dd u \dd s \\
    &\lesssim \frac{ e^{-\lambda t}\Vert wf_s\Vert_\infty \sup_{s\leq t}e^{\lambda s}\Vert w_{\tilde{\theta}}\alpha \nabla_x f(s)\Vert_\infty }{w_{\tilde{\theta}}(v)\alpha(x,v)}.
\end{align*}

Thus the contribution of $\Gamma(f,f_s)$ in the $h$ of~\eqref{nabla_h_bdd} corresponds to the second line in~\eqref{nabla_h_bdd}.

Then we consider the contribution of $\Gamma(f_s,f)$ and bound
\[\Big|\int^t_{\max\{0,t-\tb\}} e^{-\nu(t-s)}\nabla_x \Gamma(f_s,f)(s,x-(t-s)v,v) \dd s \Big|.\]
Again by~\eqref{Gamma_est}, the contribution of the first term on RHS is bounded as
\begin{align*}
    & \int^t_{\max\{0,t-\tb\}} e^{-\nu(t-s)}\langle v\rangle\Big[\Vert wf_s\Vert_\infty e^{-\lambda s}  \frac{\sup_{s\leq t}e^{\lambda s}\Vert w_{\tilde{\theta}}\alpha\nabla_x f(s)\Vert_\infty}{w_{\tilde{\theta}}(v)\alpha(x,v)} \Big] \dd s \\
    & \lesssim  \frac{e^{-\lambda t}\Vert wf_s\Vert_\infty \sup_{s\leq t}e^{\lambda s}\Vert w_{\tilde{\theta}}\alpha\nabla_x f(s)\Vert_\infty}{w_{\tilde{\theta}}(v)\alpha(x,v)}.
\end{align*}
The contribution of the second and third terms on RHS of~\eqref{Gamma_est} are bounded using the same computation for $\nabla_x \Gamma(f,f_s)$. Thus the contribution of $\Gamma(f_s,f)$ in $h$ of~\eqref{nabla_h_bdd} corresponds to the second line in~\eqref{nabla_h_bdd}.

Last we consider the contribution of $\Gamma(f,f)$ and bound
\[\Big|\int^t_{\max\{0,t-\tb\}} e^{-\nu(t-s)}\nabla_x \Gamma(f,f)(s,x-(t-s)v,v) \dd s \Big|.\]
By~\eqref{Gamma_est}, the contribution of the first term on RHS is bounded as
\begin{align*}
    & \int^t_{\max\{0,t-\tb\}} e^{-\nu(t-s)}e^{-2\lambda s} \langle v\rangle\Big[\sup_t e^{\lambda t}\Vert wf(t)\Vert_\infty   \frac{\sup_{s\leq t}e^{\lambda s}\Vert w_{\tilde{\theta}}\alpha\nabla_x f(s)\Vert_\infty}{w_{\tilde{\theta}}(v)\alpha(x,v)} \Big] \dd s \\
    & \lesssim  \frac{e^{-\lambda t}\sup_t e^{\lambda t}\Vert wf(t)\Vert_\infty \sup_{s\leq t}e^{\lambda s}\Vert w_{\tilde{\theta}}\alpha\nabla_x f(s)\Vert_\infty}{w_{\tilde{\theta}}(v)\alpha(x,v)}.
\end{align*}
The contribution of the second and third terms on RHS of~\eqref{Gamma_est} are bounded as
\begin{align*}
    & \int^t_{\max\{0,t-\tb\}}e^{-\nu(t-s)} e^{-2\lambda s}\sup_t e^{\lambda t}\Vert wf(t)\Vert_\infty \int_{\mathbb{R}^3} \mathbf{k}(v,u)\frac{\sup_{s\leq t}e^{\lambda s}\Vert w_{\tilde{\theta}}\alpha\nabla_x f(s) \Vert_\infty}{w_{\tilde{\theta}}(u)\alpha(x-(t-s)v,u)} \dd u \dd s \\
    &\lesssim e^{-\lambda t}\sup_t e^{\lambda t}\Vert wf(t)\Vert_\infty \sup_{s\leq t}e^{\lambda s}\Vert w_{\tilde{\theta}}\alpha\nabla_x f(s)\Vert_\infty w_{\tilde{\theta}}^{-1}(v) \\
    &\times \int^t_{\max\{0,t-\tb\}}e^{-\nu(t-s)/2} \int_{\mathbb{R}^3}  \frac{\mathbf{k}_{\tilde{\varrho}}(v,u)}{\alpha(x-(t-s)v,u)}\\
    &\lesssim \frac{ e^{-\lambda t}\sup_t e^{\lambda t}\Vert wf(t)\Vert_\infty \sup_{s\leq t}e^{\lambda s}\Vert w_{\tilde{\theta}}\alpha\nabla_x f(s)\Vert_\infty  }{w_{\tilde{\theta}}(v)\alpha(x,v)}.
\end{align*}
Thus the contribution of $\Gamma(f,f)$ in $h$ of~\eqref{nabla_h_bdd} corresponds to the third line of~\eqref{nabla_h_bdd}.

We conclude the first inequality in~\eqref{nabla_h_bdd}. The second inequality follows by\\
$\Vert wf_s\Vert_\infty\ll 1$ and $\sup_t e^{\lambda t}\Vert wf(t)\Vert_\infty\ll 1$ from~\eqref{infty_bound} and~\eqref{dym_stability}.

\end{proof}

\ \\

\subsection{Proof of Theorem \ref{thm:dynamic_regularity}}\label{sec:proof_thm_2}

We prove Theorem \ref{thm:dynamic_regularity} by estimating each term in~\eqref{nabla_x_1} - \eqref{nabla_x_8}.

For fixed and small $\e\ll 1$, we denote
\begin{align}
  \mathcal{B}  &:= \Vert w_{\tilde{\theta}}\alpha\nabla_x f_s\Vert_\infty +  \Vert w_{\tilde{\theta}}\alpha\nabla_x f_0\Vert_\infty  \notag\\
  & \ \ \ +  \sup_t e^{\lambda t}|w\partial_t f(t)|_\infty + \sup_t e^{\lambda t}|wf(t)|_\infty + \e^{-1}\sup_t e^{\lambda t}\Vert wf(t)\Vert_\infty. \label{upp_bdd_B}
\end{align}
Here we note that $\sup_t e^{\lambda t}| w\partial_t f(t)|_\infty$ is bounded in Corollary \ref{corollary:time_derivative}, \\
$\sup_t e^{\lambda t}\Vert wf(t)\Vert_\infty, \ \sup_t e^{\lambda t}|wf(t)|_\infty$ are bounded in~\eqref{dym_stability} and $\Vert w_{\tilde{\theta}}\alpha\nabla_x f_s\Vert_\infty$ is bounded in~\eqref{estF_n}.

We will prove the following estimate:
\begin{equation}\label{estimate_proof}
|\nabla_x f(t,x,v)| \lesssim \frac{e^{-\lambda t}}{w_{\tilde{\theta}}(v)\alpha(x,v)}\times [\mathcal{B}+ o(1)\sup_{s\leq t}e^{\lambda s}\Vert w_{\tilde{\theta}}\alpha\nabla_x f(s)\Vert_\infty].
\end{equation}
We present the proof of the theorem in four steps.

In Step 1, we estimate~\eqref{nabla_x_1}, \eqref{nabla_x_2}, \eqref{nabla_x_3}, \eqref{nabla_x_6}, \eqref{nabla_x_7}, and \eqref{nabla_x_8}.

In Step 2, we estimate~\eqref{nabla_x_4} using the boundary condition~\eqref{f_bc} with the representation~\eqref{eqn: diffuse for f}. We will apply the characteristic~\eqref{nabla_x_1} - \eqref{nabla_x_8}, so that the estimate in Step 1 can be utilized.

In Step 3, we estimate~\eqref{nabla_x_5}. We will apply the characteristic~\eqref{nabla_x_1} - \eqref{nabla_x_8}, and thus the estimates in Step 1 and Step 2 can be utilized.

Finally, in Step 4, we summarize all the estimates and conclude the theorem.

In fact, Theorem \ref{thm:dynamic_regularity} follows directly from~\eqref{estimate_proof} as illustrated in Step 4.

\textit{Step 1: estimate of~\eqref{nabla_x_1}, \eqref{nabla_x_2}, \eqref{nabla_x_3}, \eqref{nabla_x_6}, \eqref{nabla_x_7} and \eqref{nabla_x_8}.}

From $\lambda \ll 1$ and $\nu(v)\gtrsim 1$, \eqref{nabla_x_1}, \eqref{nabla_x_2}, \eqref{nabla_x_3}, \eqref{nabla_x_6} are bounded as
\begin{align}
& |\eqref{nabla_x_1}| + |\eqref{nabla_x_2}| + |\eqref{nabla_x_3}| + |\eqref{nabla_x_6}|  \notag\\
    &\lesssim  e^{-\lambda t}\frac{w^{-1}_{\tilde{\theta}}(v)}{\alpha(x,v)} \Vert w_{\tilde{\theta}}\alpha\nabla_x f_0 \Vert_\infty   + \frac{w^{-1}(v)\nu}{\alpha(x,v)}e^{-\lambda (\tb+t-\tb)} e^{\lambda(t-\tb)}| wf(t-\tb)|_\infty \notag\\
    & + \frac{w^{-1}(v)}{\alpha(x,v)} e^{-\lambda(\tb+t-\tb)} e^{\lambda (t-\tb)} | w\partial_tf(t-\tb)|_\infty+ \frac{w^{-1}_{\tilde{\theta}}(v)}{\alpha(x,v)}e^{-\lambda(\tb+t-\tb)} | wf(t-\tb)|_\infty \notag\\
    &\lesssim {\color{black} \frac{e^{-\lambda t}}{w_{\tilde{\theta}}(v)\alpha(x,v)}\times \mathcal{B}. } \label{1_2_3_6_bdd}
\end{align}
$\mathcal{B}$ is defined in~\eqref{upp_bdd_B}. Here we applied~\eqref{nabla_tbxb} and~\eqref{bdr_alpha} to $\nabla_x \tb$ and~\eqref{Kf_bdd} to~\eqref{nabla_x_6}.

\eqref{nabla_x_7} and~\eqref{nabla_x_8} are bounded using \eqref{h_bdd} and~\eqref{nabla_h_bdd}:
\begin{align}
  & |\eqref{nabla_x_7}| + |\eqref{nabla_x_8}| \notag\\
  & \lesssim  \frac{o(1)e^{-\lambda t}}{w_{\tilde{\theta}}(v)\alpha(x,v)}\big[\sup_{s\leq t}e^{\lambda s}\Vert w_{\tilde{\theta}}\alpha\nabla_x f(s)\Vert_\infty + \Vert w_{\tilde{\theta}}\alpha\nabla_x f_s\Vert_\infty\big]\notag\\
  &+\frac{e^{-\nu \tb}e^{-\lambda (t-\tb)} \sup_t e^{\lambda t} |wf(t)|_\infty}{\alpha(x,v)w_{\tilde{\theta}}(v)}\notag \\
   &   \lesssim {\color{black} \frac{e^{-\lambda t}}{w_{\tilde{\theta}}(v)\alpha(x,v)}\times [\mathcal{B}+o(1)\sup_{s\leq t}e^{\lambda s} \Vert w_{\tilde{\theta}}\alpha\nabla_x f(s)\Vert_\infty]. }\label{7_8_bdd}
\end{align}

\textit{Step 2: estimate of~\eqref{nabla_x_4}.}

\eqref{nabla_x_4} corresponds to the boundary condition~\eqref{f_bc}. We use the representation in~\eqref{eqn: diffuse for f} and apply~\eqref{xi deri xbp} with~\eqref{bdr_alpha} to bound it as
\begin{align}
&|\eqref{nabla_x_4}| \notag \\
& \lesssim \frac{e^{-\nu \tb}|v| \big|\partial_{\mathbf{x}_{p^1,j}^1} M_W(\eta_{p^1}(\bxp),v)\big|}{\sqrt{\mu(v)}\alpha(x,v)} \times \Big|\int_{n(x^1)\cdot v^1>0} f(t^1,x^1,v^1)\sqrt{\mu(v^1)}[n(x^1)\cdot v^1]\dd v^1 \Big| \label{partial_wall}\\
&+\frac{e^{-\nu \tb}|v|M_W(\eta_{p^1}(\mathbf{x}_{p^1}^1),v)}{\sqrt{\mu(v)}\alpha(x,v)}\times
\Big|\int_{\mathbf{v}^{1}_{p^{1},3}>0}  \underbrace{\p_{\mathbf{x}^{1}_{p^{1},j}}
 [f(t^1, \eta_{p^{1}} ( \mathbf{x}_{p^{1} }^{1}  ), T^t_{\mathbf{x}^{1}_{p^{1}}} \mathbf{v}^{1}_{p^{1}}) ]}_{(\ref{fBD_x})_*}\sqrt{\mu(\mathbf{v}^{1}_{p^{1}})}
 \mathbf{v}^{1}_{p^{1},3}
\dd  \mathbf{v}^{1}_{p^{1}}\label{fBD_x}\Big|.
\end{align}
Here we recall that $x^1, v^1$ and $t^1$ are defined in Definition \ref{definition: sto cycle}.

The second term in~\eqref{partial_wall} is directly bounded by
\begin{equation}\label{partial_wall_second}
e^{-\lambda t^1} \sup_t e^{\lambda t}|wf(t)|_\infty=   e^{-\lambda (t-\tb)} \sup_t e^{\lambda t}|wf(t)|_\infty.
\end{equation}

For the first term in \eqref{partial_wall}, we directly take derivative to~\eqref{Wall Maxwellian} to bound it as
\begin{align*}
  \frac{|v| \Big| \partial_{\mathbf{x}_{p^1,j}^1}M_W(\eta_{p^1}(\mathbf{x}_{p^1}^1),v)\Big|}{\sqrt{\mu(v)}}  & \lesssim \Vert \eta\Vert_{C^1}\Vert T_W\Vert_{C^1(\p\O)}[1+|v|]\frac{M_W(x,v)}{\sqrt{\mu(v)}}\lesssim w^{-1}_{\tilde{\theta}}(v) . \end{align*}
Here we used $\tilde{\theta}\ll 1$ and $\Vert T_W-1\Vert_{L^\infty(\p\O)} \ll 1$.

Thus combining with the extra $e^{-\lambda(t-\tb)}$ from \eqref{partial_wall_second}, we bound
\begin{align}
   \eqref{partial_wall} & \lesssim \frac{e^{-\lambda t}}{w_{\tilde{\theta}}(v)\alpha(x,v)}\times \sup_t e^{\lambda t}|wf(t)|_\infty \lesssim \frac{e^{-\lambda t}}{w_{\tilde{\theta}}(v)\alpha(x,v)}\times \mathcal{B}. \label{partial_wall_bdd}
\end{align}

Now we consider (\ref{fBD_x}). First we consider the velocity derivative in $\eqref{fBD_x}_*$ given by
\Be
\begin{split}
& \left(\p_{\mathbf{x}^{1}_{p^{1},j}} T^t_{\mathbf{x}^{1}_{p^{1}}}   \mathbf{v}^{1}_{p^{1}}\right) \cdot
\nabla_v
 f(t^1 ,\eta_{p^{1}} ( \mathbf{x}_{p^{1} }^{1}  ), T^t_{\mathbf{x}^{1}_{p^{1}}} \mathbf{v}^{1}_{p^{1}})\\
= &
  \sum_{l,m} \frac{\p }{ \p{\mathbf{x}^{1}_{p^{1}, j}}  }\left(
  \frac{\p_m \eta_{p^{1},  l} (\mathbf{x}^{1}_{p^{1}})}{\sqrt{g_{p^{1},mm}(\mathbf{x}^{1}_{p^{1}})} }\right)
   \mathbf{v}_{p^{1},m}^{1}\p_{v_l}f(t^1,\eta_{p^{1}} ( \mathbf{x}_{p^{1} }^{1}  ), T^t_{\mathbf{x}^{1}_{p^{1}}} \mathbf{v}^{1}_{p^{1}})\\
=&  \sum_{  m,n}
(\ref{v_under_v_mn})_{mn}
\mathbf{v}_{p^{1},m}^{1} \p_{\mathbf{v}^{1}_{p^{1},n}} [f(t^1,  \eta_{p^{1}} ( \mathbf{x}_{p^{1} }^{1}  ) , T^t_{\mathbf{x}^{1}_{p^{1}}}\mathbf{v}^{1}_{p^{1}})],
\label{v_under_v}
\end{split}\Ee
where
\Be\label{v_under_v_mn}
(\ref{v_under_v_mn})_{mn} :=\sum_l \frac{\p }{ \p{\mathbf{x}^{1}_{p^{1}, j}}  }\left(
  \frac{\p_m \eta_{p^{1},l} (\mathbf{x}^{1}_{p^{1}})}{\sqrt{g_{p^{1},mm}(\mathbf{x}^{1}_{p^{1}})} }\right)
  \frac{\p_n \eta_{p^{1},l} (\mathbf{x}^{1}_{p^{1}})}{\sqrt{g_{p^{1},nn}(\mathbf{x}^{1}_{p^{1}})} } .
\Ee

Then we apply an integration by parts with respect to $\p_{\mathbf{v}_{p^{1}}^{1}}$ to derive that
\Be\begin{split}\label{IBP_v}
& \Big|\int_{\mathbf{v}^{1}_{p^{1},3}>0} f(t^1,\eta_{p^{1}} ( \mathbf{x}_{p^{1}}^{1}  ), T^t_{\mathbf{x}^{1}_{p^{1}}} \mathbf{v}^{1}_{p^{1}})
\sum_{m,n}   (\ref{v_under_v_mn})_{mn}
\p_{\mathbf{v}^{1}_{p^{1},n}}
\big[\mathbf{v}^{1}_{p^{1}, m}  \mathbf{v}^{1}_{p^{1},3}\sqrt{\mu(\mathbf{v}^{1}_{p^{1}})}\big] \dd \mathbf{v}^{1}_{p^{1}} \Big| \\
 & \lesssim  e^{-\lambda (t-\tb)} e^{\lambda (t-\tb)} | wf(t-\tb)|_\infty.
\end{split}\Ee
Here we notice that 
\[f(t^1,\eta_{p^{1}} ( \mathbf{x}_{p^{1} }^{1}  ), T^t_{\mathbf{x}^{1}_{p^{1}}} \mathbf{v}^{1}_{p^{1}})
\sum_{m,n}   (\ref{v_under_v_mn})_{mn}\mathbf{v}^{1}_{p^{1}, m}  \mathbf{v}^{1}_{p^{1},3} \sqrt{\mu(\mathbf{v}^{1}_{p^{1}})}\equiv 0\] 
when $\mathbf{v}^{1}_{p^{1},3}=0$ for ${\color{black}| wf(t-\tb)|_\infty  }<\infty$.

We conclude that the contribution of~\eqref{v_under_v} in~\eqref{fBD_x}, i.e, the partial derivative with respect to the velocity variable, is bounded by
\begin{align}
    &\frac{e^{-\lambda t}w^{-1}_{\tilde{\theta}}(v)}{\alpha(x,v)}\sup_t e^{\lambda t}| wf(t)|_\infty \leq \frac{e^{-\lambda t}}{w_{\tilde{\theta}}(v)\alpha(x,v)}\times \mathcal{B}.\label{bdr_deri_v_bdd}
\end{align}

Next we consider the spatial derivative in $\eqref{fBD_x}_*$, which is given by 
\begin{align}
    &  \int_{n(x^1)\cdot v^1 > 0} \partial_{\mathbf{x}_{p^1,j}^1}f(t^1, \eta_{p^1}(\mathbf{x}_{p^1}^1),v^1) \sqrt{\mu(v^1)} [n(x^1)\cdot v^1] \dd v^1. \label{bdr_deri_x}
\end{align}
Here we rewrite the velocity variable using the Cartesian coordinate $v^1$.

Note that from~\eqref{fBD_x1}, we have $\partial_{\xpj} f(t^1,\eta_{p^1}(\bxp),v^1) = \partial_j \eta_{p^1}(\bxp) \nabla_x f(t^1,\eta_{p^1}(\bxp),v^1)$. Since $n(x^1)\cdot v^1>0$, we can expand 
$\nabla_x f(t^1,\eta_{p^1}(\bxp),v^1)$ along the characteristic using~\eqref{nabla_x_1}-\eqref{nabla_x_8}. 

The contribution of \eqref{nabla_x_1}, \eqref{nabla_x_2}, \eqref{nabla_x_3}, \eqref{nabla_x_6}, \eqref{nabla_x_7} and~\eqref{nabla_x_8} in the expansion of $\nabla_x f(t^1,\eta_{p^1}(\bxp),v^1)$ can be computed similarly as~\eqref{1_2_3_6_bdd} and~\eqref{7_8_bdd}, where we just need to replace $t$ by $t^1$ and $(x,v)$ by $(x^1,v^1)$. Thus the contribution of these six terms in~\eqref{bdr_deri_x} are bounded by
\begin{align}
    & \int_{n(x^1)\cdot v^1>0} \frac{e^{-\lambda (t-\tb)}}{\alpha(x^1,v^1)}[n(x^1)\cdot v^1]\sqrt{\mu(v^1)}[\mathcal{B} + o(1)\sup_{s\leq t}e^{\lambda s}\Vert w_{\tilde{\theta}}\alpha\nabla_x f(s)\Vert_\infty] \dd v^1 \notag\\
    & \lesssim e^{-\lambda(t-\tb)} [\mathcal{B} + o(1)\sup_{s\leq t}e^{\lambda s}\Vert w_{\tilde{\theta}}\alpha\nabla_x f(s)\Vert_\infty], \label{bdd_deri_x_1236}
\end{align}
where we have used \eqref{kinetic_distance} and \eqref{bdr_alpha} to cancel the singularity of $\frac{1}{\alpha(x^1,v^1)}$ as follows:
\begin{align}
  &  \frac{|n(x^1)\cdot v^1|}{\alpha(x^1,v^1)} \sqrt{\mu(v^1)}  = \mathbf{1}_{\alpha(x^1,v^1) \geq \frac{1}{2}} + \mathbf{1}_{\alpha(x^1,v^1) < \frac{1}{2}} \notag\\
    & \leq 2|n(x^1)\cdot v^1| \sqrt{\mu(v^1)} + \frac{|n(x^1)\cdot v^1|}{\tilde{\alpha}(x^1,v^1)}  \lesssim [|n(x^1)\cdot v^1| + 1]\sqrt{\mu(v^1)}. \label{singlarity_alpha}
\end{align}

Next we consider the contribution of~\eqref{nabla_x_5} in~\eqref{bdr_deri_x}, which reads
\begin{align}
    & \Big|\int_{n(x^1)\cdot v^1>0} \dd v^1 \sqrt{\mu(v^1)}[n(x^1)\cdot v^1]\partial_j \eta_{p^1}(\bxp) \notag\\
    &\times  \int_{\max\{0,t^1-\tb(x^1,v^1)\}}^{t^1} \dd s e^{-\nu(v^1)(t^1-s)}\int_{\mathbb{R}^3} \dd u\mathbf{k}(v^1,u) \nabla_x f(s,x^1-(t^1-s)v^1,u)  \Big|. \label{bdr_x_K}
\end{align}

{\color{black}

We separate the $\dd s$ integral into $t^1-s<\e$ and $t^1-s\geq \e$. When $t^1-s<\e$, we denote the above term as
\begin{align}
  \eqref{bdr_x_K}_1  &:= \Big|\int_{n(x^1)\cdot v^1>0} \dd v^1 \sqrt{\mu(v^1)}[n(x^1)\cdot v^1]       \partial_j \eta_{p^1}(\bxp)  \notag\\
  & \times \int_{\max\{0,t^1-\tb(x^1,v^1),t^1-\e\}}^{t^1} \dd s e^{-\nu(v^1)(t^1-s)}\int_{\mathbb{R}^3} \dd u\mathbf{k}(v^1,u) \nabla_x f(s,x^1-(t^1-s)v^1,u)  \Big|\notag\\
   & \lesssim \int_{n(x^1)\cdot v^1>0} \sqrt{\mu(v^1)}[n(x^1)\cdot v^1] \int_{\max\{t^1-\tb(x^1,v^1),t^1-\e\}}^{t^1} \dd s e^{-\nu(v^1)(t^1-s)} \notag\\
    &\times \int_{\mathbb{R}^3} \dd u \frac{\mathbf{k}(v^1,u)e^{-\lambda s}}{\alpha(x^1-(t^1-s)v^1,u)} \sup_{s\leq t} e^{\lambda s}\Vert w_{\tilde{\theta}}\alpha\nabla_x f(s)\Vert_\infty. \label{bdd_bdr_x_K_1}
\end{align}
Note that from~\eqref{nablav nu}, we have $\nu(v^1) \thicksim \sqrt{|v^1|^2+1}$, together with $\lambda \ll 1$, we derive \\
$e^{-\nu(v^1)(t^1-s)}e^{-\lambda s^1}\leq e^{-\nu(v^1)(t^1-s)/2} e^{-\lambda t^1}$. We further bound
\begin{align}
   \eqref{bdd_bdr_x_K_1} & \lesssim e^{-\lambda t^1} \sup_{s\leq t} e^{\lambda s}\Vert w_{\tilde{\theta}}\alpha\nabla_x f(s)\Vert_\infty \int_{n(x^1)\cdot v^1>0 }  \dd v^1 \sqrt{\mu(v^1)}[n(x^1)\cdot v^1] \notag\\
   &\times \int_{\max\{t^1-\e,t^1-\tb(x^1,v^1)\}}^{t^1} \dd s e^{-\nu(v^1)(t^1-s)/2} \int_{\mathbb{R}^3} \dd u \frac{\mathbf{k}(v^1,u)}{\alpha(x^1-(t^1-s)v^1,u)} \notag\\
   &\lesssim o(1) e^{-\lambda t^1} \sup_{s\leq t} e^{\lambda s}\Vert w_{\tilde{\theta}}\alpha\nabla_x f(s)\Vert_\infty \int_{n(x^1)\cdot v^1>0} \sqrt{\mu(v^1)}\frac{[n(x^1)\cdot v^1]}{\alpha(x^1,v^1)}\dd v^1 \notag\\
   &\lesssim o(1) e^{-\lambda t^1} \sup_{s\leq t} e^{\lambda s}\Vert w_{\tilde{\theta}}\alpha\nabla_x f(s)\Vert_\infty. \label{bdr_x_K_1_bdd}
\end{align}
Here we have applied Lemma \ref{lemma:nonlocal_to_local} to get the third line, and we applied \eqref{singlarity_alpha} in the last line.

When $t^1-s\geq \e$, we further split the $v^1$ integration into $n(x^1)\cdot v^1<\delta$ and $n(x^1)\cdot v^1 > \delta$. For $n(x^1)\cdot v^1<\delta$, we have
\begin{align}
  &  \Big|\int_{n(x^1)\cdot v^1<\delta} \dd v^1 \sqrt{\mu(v^1)}[n(x^1)\cdot v^1]       \partial_j \eta_{p^1}(\bxp)  \notag\\
  & \times \int_{\max\{0,t^1-\tb(x^1,v^1)\}}^{t^1-\e} \dd s e^{-\nu(v^1)(t^1-s)}\int_{\mathbb{R}^3} \dd u\mathbf{k}(v^1,u) \nabla_x f(s,x^1-(t^1-s)v^1,u)  \Big| \notag\\
  &\lesssim e^{-\lambda t^1} \sup_{s\leq t} e^{\lambda s}\Vert w_{\tilde{\theta}}\alpha\nabla_x f(s)\Vert_\infty \int_{n(x^1)\cdot v^1 < \delta }  \dd v^1 \sqrt{\mu(v^1)}[n(x^1)\cdot v^1] \notag\\
   &\times \int_{\max\{t^1-\tb(x^1,v^1)\}}^{t^1-\e} \dd s e^{-\nu(v^1)(t^1-s)/2} \int_{\mathbb{R}^3} \dd u \frac{\mathbf{k}(v^1,u)}{\alpha(x^1-(t^1-s)v^1,u)} \notag\\
   & \lesssim e^{-\lambda t^1} \sup_{s\leq t} e^{\lambda s}\Vert w_{\tilde{\theta}}\alpha\nabla_x f(s)\Vert_\infty \int_{n(x^1)\cdot v^1<\delta} \sqrt{\mu(v^1)}\frac{[n(x^1)\cdot v^1]}{\alpha(x^1,v^1)}\dd v^1 \notag \\
   &\lesssim o(1) e^{-\lambda t^1} \sup_{s\leq t} e^{\lambda s}\Vert w_{\tilde{\theta}}\alpha\nabla_x f(s)\Vert_\infty.   \label{bdr_x_K_2_1_bdd}
\end{align}
Here we applied Lemma \ref{lemma:nonlocal_to_local} in the second last line.

For $n(x^1)\cdot v^1 > \delta$ and $t^1-s\geq \e$, we use the following change of variable 
\begin{equation}\label{cov_v1}
\nabla_x f(s,x^1-(t^1-s)v^1,u) = -\frac{\nabla_{v^1}[f(s,x^1-(t^1-s)v^1,u)]}{t^1-s}.  
\end{equation}
In such case, without loss of generality, we assume $t^1>\e$, otherwise, we directly bound \eqref{bdr_x_K} by \eqref{bdr_x_K_1_bdd}. Then in \eqref{bdr_x_K}, we only integrate over the $v^1$ such that $\tb(x^1,v^1)>\e$. Then we denote 
\begin{align*}
    V: = \{v^1\in \mathbb{R}^3: n(x^1)\cdot v^1 > \delta, \ \tb(x^1,v^1)>\e\}.
\end{align*}
We also denote the corresponding hypersurface as $\tilde{V}$. Since $n(x^1)\cdot v^1>\delta$, by \eqref{nabla_tbxb}, $\tilde{V}$ is piecewise smooth.

In such case, we denote \eqref{bdr_x_K} as
\begin{align*}
    \eqref{bdr_x_K}_2 & :=  \Big|\int_{V} \dd v^1 \sqrt{\mu(v^1)}[n(x^1)\cdot v^1]       \partial_j \eta_{p^1}(\bxp)  \notag\\
  & \times \int_{\max\{0,t^1-\tb(x^1,v^1)\}}^{t^1-\e} \dd s e^{-\nu(v^1)(t^1-s)}\int_{\mathbb{R}^3} \dd u\mathbf{k}(v^1,u) \frac{-\nabla_{v^1} f(s,x^1-(t^1-s)v^1,u)}{t^1-s}  \Big|\notag.
\end{align*}
Here we applied \eqref{cov_v1}.

Then we apply an integration by part in $v^1$ to have
\begin{equation}\label{bdr_ibp}
\begin{split}
 & \eqref{bdr_x_K}_2 \\
 & = \Big|\int_{V} \nabla_{v^1}[\sqrt{\mu(v^1)}[n(x^1)\cdot v^1]]\partial_j \eta_{p^1}(\bxp) \\
  &\times  \int^{t^1-\e}_{\max\{0,t^1-\tb(x^1,v^1)\}} \dd s e^{-\nu(v^1)(t^1-s)} \int_{\mathbb{R}^3} \dd u\mathbf{k}(v^1,u)  \frac{f(s,x^1-(t^1-s)v^1,u)}{t^1-s}    \\
    & + \int_{V} \sqrt{\mu(v^1)}[n(x^1)\cdot v^1]\partial_j \eta_{p^1}(\bxp)\\
    &\times  \int^{t^1-\e}_{\max\{0,t^1-\tb(x^1,v^1)\}} \dd s  \int_{\mathbb{R}^3} \dd u \nabla_{v^1}[ e^{-\nu(v^1)(t^1-s)}\mathbf{k}(v^1,u)]  \frac{f(s,x^1-(t^1-s)v^1,u)}{t^1-s}  \\
    & +\int_{V} \sqrt{\mu(v^1)}[n(x^1)\cdot v^1]\partial_j \eta_{p^1}(\bxp) \mathbf{1}_{t^1-\tb(x^1,v^1)>0}\\
    &\times \nabla_{v^1} \tb(x^1,v^1) e^{-\nu(v^1)\tb(x^1,v^1)}\int_{\mathbb{R}^3} \dd u \mathbf{k}(v^1,u) \frac{f(t^1-\tb(x^1,v^1),\xb(x^1,v^1),u)}{\tb(x^1,v^1)} \\
    & - \int_{\tilde{V}} \sqrt{\mu(v^1)}[n(x^1)\cdot v^1] \p_j \eta_{p^1}(\bxp) \mathbf{n} \\
    &\times \int^{t^1-\e}_{\max\{0,t^1-\tb(x^1,v^1)\}} \dd s  \int_{\mathbb{R}^3} \dd u  e^{-\nu(v^1)(t^1-s)}\mathbf{k}(v^1,u)  \frac{f(s,x^1-(t^1-s)v^1,u)}{t^1-s}\Big| .    
\end{split}
\end{equation}
In the last term $\mathbf{n} = \frac{-\nabla_{v^1} \tb(x^1,v^1)}{|\nabla_{v^1} \tb(x^1,v^1)|}$. Then the last term is bounded using $\Vert w f(s)\Vert_{\infty}<\infty$ by
\begin{align}
    & \e^{-1} e^{-\lambda t^1} \sup_t e^{\lambda t}\Vert wf(t)\Vert_\infty . \label{bdd_last}
\end{align}
Here we applied $e^{-\nu(v^1)(t^1-s)}e^{-\lambda s^1}\leq e^{-\nu(v^1)(t^1-s)/2} e^{-\lambda t^1}$, $\mathbf{k}(v^1,u)\in L^1_u$ and $t^1-s\geq \e$.}

Again by the three conditions above, the first term on the RHS of \eqref{bdr_ibp} is bounded as 
\begin{align}
    & \e^{-1} e^{-\lambda t^1} \sup_t e^{\lambda t}\Vert wf(t)\Vert_\infty . \label{bdd_first_two}
\end{align}

{\color{black}
For the second term in \eqref{bdr_ibp}, we use~\eqref{nabla_k_theta} with $\tilde{\theta} \ll \theta$, and $|v^1|^2 w^{-1}_{\tilde{\theta}}(v^1)\lesssim 1$ to have 
\begin{align*}
 |\nabla_{v^1}\mathbf{k}(v^1,u)| |f(s,u)|  &   \leq \Vert wf(s)\Vert_\infty  w^{-1}_{\tilde{\theta}}(v^1)|\nabla_{v^1} \mathbf{k}(v^1,u)| \frac{w_{\tilde{\theta}}(v^1)}{w_{\tilde{\theta}}(u)}      \\
 & \lesssim  \Vert wf(s)\Vert_\infty  w^{-1}_{\tilde{\theta}}(v^1) [1+|v^1|^2 ] \frac{\mathbf{k}_{\tilde{\varrho}}(v^1,u)}{|v^1-u|}\\
 &\lesssim e^{-\lambda s} \sup_t e^{\lambda t}\Vert wf(t)\Vert_\infty  \frac{\mathbf{k}_{\tilde{\varrho}}(v^1,u)}{|v^1-u|} \in L^1_u.
\end{align*}

Thus the contribution of $\nabla_{v^1} \mathbf{k}(v^1,u)$ in the second term on the RHS of \eqref{bdr_ibp} is bounded as
\begin{align}
    & \e^{-1} \int_{n(x^1)\cdot v^1>0}  \nu(v^1)\sqrt{\mu(v^1)} [n(x^1)\cdot v^1] \int^{t^1-\e}_{\max\{0,t^1-\tb(x^1,v^1)\}} \dd s e^{-\nu(v^1)(t^1-s)} e^{-\lambda t^1} \notag\\
    & \lesssim \e^{-1}e^{-\lambda t^1} \sup_t e^{\lambda t}\Vert wf(t)\Vert_\infty . \label{bdd_two_two}
\end{align}}
The contribution of $\frac{|\nabla_{v^1} e^{-\nu(v^1)(t^1-s)}|}{t^1-s}\lesssim e^{-\nu(v^1)(t^1-s)}$ in the second term on the RHS of \eqref{bdr_ibp} can be bounded by \eqref{bdd_two_two} by the same computation. Here we have used \eqref{nablav nu}.

For the third term we apply~\eqref{nabla_tbxb} to have $\big| \frac{\nabla_{v^1}\tb(x^1,v^1)}{\tb(x^1,v^1)} \big| \lesssim \frac{1}{\alpha(x^1,v^1)}$, and thus the third term in \eqref{bdr_ibp} is bounded by
\begin{align}
    & e^{-\lambda t^1} \sup_t e^{\lambda t}| wf(t)|_\infty.\label{bdd_third}
\end{align}

Collecting~\eqref{bdr_x_K_1_bdd} and \eqref{bdd_last} -- \eqref{bdd_third} we conclude that the contribution of~\eqref{nabla_x_5} in~\eqref{bdr_deri_x} is bounded by
\begin{align}
    & e^{-\lambda t^1}\big[\mathcal{B}+ o(1)\sup_{s\leq t}e^{\lambda s}\Vert w_{\tilde{\theta}}\alpha\nabla_x f(s)\Vert_\infty \big] . \label{bdd_deri_x_K}
\end{align}
Here we recall $\mathcal{B}$ is defined in~\eqref{upp_bdd_B}.

{\color{black} Next, we consider the contribution of \eqref{nabla_x_4} in \eqref{bdr_deri_x}. We use the boundary condition \eqref{f_bc} with the representation \eqref{eqn: diffuse for f} at $x^2$, then such contribution reads
\begin{align}
    & \int_{n(x^1)\cdot v^1>0}  \sum_{j'=1,2} \frac{\p \mathbf{x}_{p^2,j'}^2}{ \p \mathbf{x}_{p^1,j}^1} e^{-\nu(v^1)\tb^1}  \sqrt{\mu(v^1)} [n(x^1)\cdot v^1] \dd v^1 \notag\\
    & \times \Big[\frac{\p_{\mathbf{x}_{p^2,j'}^2} M_W(\eta_{p^2}(\mathbf{x}_{p^2}^2),v^1)}{\sqrt{\mu(v^1)}} \int_{n(x^2)\cdot v^2>0} f(t^2,x^2,v^2) \sqrt{\mu(v^2)}[n(x^2)\cdot v^2] \dd v^2   \label{bdr_bdr_1}\\
    &+ \frac{M_W(\eta_{p^2}(\mathbf{x}_{p^2}^2),v^1)}{\sqrt{\mu(v^1)}} \int_{\mathbf{v}_{p^2,3}^2>0}  \p_{\mathbf{x}_{p^2,j'}^2} [f(t^2,\eta_{p^2}(\mathbf{x}_{p^2}^2),T^t_{\mathbf{x}_{p^2}^2}\mathbf{v}_{p^2}^2)] \sqrt{\mu(\mathbf{v}_{p^2}^2)} \mathbf{v}^2_{p^2,3} \dd \mathbf{v}_{p^2}^2   \Big] .  \label{bdr_bdr_2}
\end{align}
Here we have applied \eqref{fBD_x1} to have $\p_j \eta_{p^1}(\mathbf{x}_{p^1}^1) \nabla_x \mathbf{x}_{p^2,j'}^2 = \frac{\p \mathbf{x}_{p^2,j'}^2}{ \p \mathbf{x}_{p^1,j}^1}$.

From \eqref{xip deri xbp}, the first term \eqref{bdr_bdr_1} is directly bounded as
\begin{align*}
   \eqref{bdr_bdr_1} & \lesssim \Vert \eta\Vert_{C^1(\p\O)} \sup_t e^{\lambda t} | wf(t)|_\infty e^{-\lambda \tb^1} e^{-\lambda (t^1-\tb^1)} = e^{-\lambda t^1}\sup_t e^{\lambda t}| wf(t)|_\infty,
\end{align*}
where we have used $t^2 = t^1-\tb^1$.

For the second term, we apply the change of variable in Lemma \ref{Lemma: change of variable} to have
\begin{align*}
    \eqref{bdr_bdr_2}& = \sum_{p^2\in \mathcal{P}} \iint_{|\mathbf{x}_{p^2}^2|<\delta_1} \dd \mathbf{x}_{p^2,1}^2 \mathbf{x}_{p^2,2}^2  \int_0^{t^1} \dd \tb^1 \sqrt{g_{p^2,11} g_{p^2,22}} e^{-\nu(v^1)\tb^1} \iota_{p^2}(\eta_{p^2}(\mathbf{x}_{p^2}^2)) \sum_{j^\prime=1,2}
\frac{\p \mathbf{x}^{2}_{p^{2},j^\prime}}{\p{\mathbf{x}^{1  }_{p^{1 },j}}} \\
    &\times\frac{n_{p^{1}} (\mathbf{x}_{p^{1}}^{1}) \cdot  (x^{1} -
 \eta_{p^{2}} (\mathbf{x}_{p^{2}}^{2})
 )
}{\tb^{1}}
 \frac{ n_{p^{2}}(\mathbf{x}^{2}_{p^{2}}) \cdot (x^{1} -
 \eta_{p^{2}} (\mathbf{x}_{p^{2}}^{2})
 ) }{|\tb^{1}|^4} \notag \\
 & \times M_W(\eta_{p^2}(\mathbf{x}_{p^2}^2),v^1) \int_{\mathbf{v}_{p^2,3}^2>0}  \p_{\mathbf{x}_{p^2,j'}^2} [f(t^1-\tb^1,\eta_{p^2}(\mathbf{x}_{p^2}^2),T^t_{\mathbf{x}_{p^2}^2}\mathbf{v}_{p^2}^2)] \sqrt{\mu(\mathbf{v}_{p^2}^2)} \mathbf{v}^2_{p^2,3} \dd \mathbf{v}_{p^2}^2.
\end{align*}
We apply the integration by parts with respect to $\p_{\mathbf{x}^{2}_{p^{2}, j^\prime}}$ for $j^{\prime}=1,2$. Then we derive
\begin{align}
    \eqref{bdr_bdr_2}& = \sum_{p^{2} \in \mathcal{P}}\iint 
\int^{t^{1}}_0 \int_{\mathbf{v}_{p^2,3}^2>0}  f(t^1-\tb^1,\eta_{p^2}(\mathbf{x}_{p^2}^2),T^t_{\mathbf{x}_{p^2}^2}\mathbf{v}_{p^2}^2)\sqrt{\mu(\mathbf{v}_{p^2}^2)} \mathbf{v}^2_{p^2,3} \dd \mathbf{v}_{p^2}^2 \notag \\
&\times \sum_{j'=1,2}\Big\{
 \p_{\mathbf{x}_{p^{2},j^\prime}^{2}} \Big[
e^{- \nu(v^{1}) \tb^{1}}
\iota_{p^{2}} (
\eta_{p^{2}} (\mathbf{x}_{p^{2}}^{2} )
) M_W(\eta_{p^2}(\mathbf{x}_{p^2}^2),v^1)\Big] \cdots  \label{p_xf_total1_under1} \\
& +  \p_{\mathbf{x}_{p^{2},j^\prime}^{2}} \Big[
\frac{\p \mathbf{x}^{2}_{p^{2},j^\prime}}{\p{\mathbf{x}^{1  }_{p^{1 },j}}}
\sqrt{g_{p^{2},11}g_{p^{2},22}  } \Big] \cdots \label{p_xf_total1_under2} \\
& + \p_{\mathbf{x}_{p^{2},j^\prime}^{2}} \Big[
\frac{n_{p^{1}} (\mathbf{x}_{p^{1}}^{1}) \cdot  (x^{1} -
 \eta_{p^{2}} (\mathbf{x}_{p^{2}}^{2})
 )
}{\tb^{1}} \cdot
 \frac{ n_{p^{2}}(\mathbf{x}^{2}_{p^{2}}) \cdot (x^{1} -
 \eta_{p^{2}} (\mathbf{x}_{p^{2}}^{2})
 ) }{|\tb^{1}|^4}\Big]\cdots  \Big\}. \label{p_xf_total1_under3}
\end{align}
Here we note that for $\iota_{p^{2} } (
\eta_{p^{2}} (\mathbf{x}_{p^{2}}^{2} )
)=0$ when $|\mathbf{x}_{p^{2}}^{2}|= \delta_1$ from (\ref{iota}), such contribution of $|\mathbf{x}_{p^{2}}^{2}|= \delta_1$ vanishes.

}

From~\eqref{nv<v2} and~\eqref{bound_vb_x}, we bounds terms in~\eqref{p_xf_total1_under1} - \eqref{p_xf_total1_under3} as follows:
\begin{align*}
    &|n(x^1)\cdot v^1| = \frac{|n(x^1)\cdot (x^2-x^1)|}{\tb^1}\lesssim \frac{|x^1-\eta_{p^2}(\mathbf{x}_{p^2}^2)|^2}{\tb^1} , \\
    & \Big| \frac{n_{p^{1}} (\mathbf{x}_{p^{1}}^{1}) \cdot  (x^{1} -
 \eta_{p^{2}} (\mathbf{x}_{p^{2}}^{2})
 )
}{\tb^{1}}
 \frac{ n_{p^{2}}(\mathbf{x}^{2}_{p^{2}}) \cdot (x^{1} -
 \eta_{p^{2}} (\mathbf{x}_{p^{2}}^{2})
 ) }{|\tb^{1}|^4} \Big|\lesssim \frac{|x^1-\eta_{p^2}(\mathbf{x}_{p^2}^2)|^4}{|\tb^1|^5},
\end{align*}

\[\Big| \frac{\partial}{\partial_{\mathbf{x}_{p^2,j'}^2} } [n(x^1)\cdot v^1]\Big| \lesssim \frac{|x^1-\eta_{p^2}(\mathbf{x}_{p^2}^2)|}{\tb^1}, \ \ \Big|\frac{\partial}{\partial_{\mathbf{x}_{p^2,j'}^2} } [n(x^2)\cdot (x^1-\eta_{p^2}(\mathbf{x}_{p^2}^2))] \Big| \lesssim |x^1-\eta_{p^2}(\mathbf{x}_{p^2}^2)|,\]

\begin{align*}
    & \Big|\frac{\partial}{\partial_{\mathbf{x}_{p^2,j'}^2} } v^1 \Big| = \Big|\frac{\partial}{\partial_{\mathbf{x}_{p^2,j'}^2} } [x^1-\eta_{p^2}(\mathbf{x}_{p^2}^2)] \Big|/\tb^1 \lesssim  \frac{\Vert \eta\Vert_{C^1}}{\tb^1} , \\
    &\Big|\frac{\partial}{\partial_{\mathbf{x}_{p^2,j'}^2} } e^{-\nu(v^1)\tb^1} \Big| \lesssim \frac{\Vert \eta\Vert_{C^1} e^{-\nu(v^1)\tb^1}}{\tb^1}, \\
    &\Big|\frac{\partial}{\partial_{\mathbf{x}_{p^2,j'}^2} } [M_W(\eta_{p^2}(\mathbf{x}_{p^2}^2),v^1)] \Big| \lesssim \Vert \eta \Vert_{C^1}[1+|v^1|^2]M_W(\eta_{p^2}(\mathbf{x}_{p^2}^2),v^1) + |v^1|\Big|\frac{\partial}{\partial_{\mathbf{x}_{p^2,j'}^2} } v^1 \Big|M_W(\eta_{p^2}(\mathbf{x}_{p^2}^2),v^1) \\
    &\lesssim \Vert \eta \Vert_{C^1}\Big[1 + \frac{1}{\tb^1} \Big] e^{-\frac{|v^1|^2}{4}}\lesssim \Big[1 + \frac{1}{\tb^1} \Big] e^{-\frac{|x^1 - \eta_{p^2}(\mathbf{x}_{p^2}^2)|^2}{4|\tb^1|^2}},
\end{align*}
\Be\begin{split}\label{px_pxx}
&\bigg| \frac{\p }{\p {\mathbf{x}_{p^{2},j^\prime}^{2}}} \bigg(
\sum_{j^\prime=1,2}
\frac{\p \mathbf{x}^{2}_{p^{2},j^\prime}}{\p{\mathbf{x}^{1  }_{p^{1 },j}}}
\sqrt{g_{p^{2},11}g_{p^{2},22}  }
\bigg) \bigg|\lesssim   \| \eta \|_{C^2} \Big\{1+
\frac{|\mathbf{v}^{2}_{p^{2} ,\parallel}|}{|\mathbf{v}^{2}_{p^{2}, 3}|^2  } | \p_3 \eta_{p^{2}}
(
\mathbf{x}^{2  }_{p^{2 }}
)
\cdot \p_j \eta_{p^{1}} (
\mathbf{x}^{1  }_{p^{1 }}
)|
\Big\}\\
  \lesssim & 
   \Big\{1+  \frac{|\mathbf{v}^{2}_{p^{2}  }|}{|\mathbf{v}^{2}_{p^{2}, 3}|^2  }
   |x^{1} - \eta_{p^{2}}
(
\mathbf{x}^{2  }_{p^{2 }}
) |
   \Big\}
   \lesssim  \frac{1}{|\mathbf{v}^{2}_{p^{2}, 3}|}=\frac{|\tb^{1}|}{|n_{p^{2}}(\mathbf{x}_{p^{2}}^{2})  \cdot  (x^{1} -
 \eta_{p^{2}} (\mathbf{x}_{p^{2}} ^{2})|} \lesssim   \frac{|\tb^1|}{|x^1-\eta_{p^2}(\mathbf{x}_{p^2}^2)|^2}.
\end{split}
\Ee
For the last estimate \eqref{px_pxx}, we also used~\eqref{xip deri xbp} and \eqref{tang*nor}.

Thus the integrand of~\eqref{p_xf_total1_under1} is bounded as
\begin{align*}
&\Big[1+ \frac{1}{\tb^1} \Big] |v^1|e^{-\frac{|x^1-\eta_{p^2}(\mathbf{x}_{p^2}^2) |^2}{4|\tb^1|^2}} \frac{|x^1-\eta_{p^2}(\mathbf{x}_{p^2}^2)|^4}{|\tb^1|^5} \notag \\
&\lesssim \Big[ \frac{|x^1-\eta_{p^2}(\mathbf{x}_{p^2}^2)|^4}{|\tb^1|^6}+\frac{|x^1-\eta_{p^2}(\mathbf{x}_{p^2}^2)|^4}{|\tb^1|^5}\Big]  e^{-\frac{\big|x^1-\eta_{p^2}(\mathbf{x}_{p^2}^2) \big|^2}{4|\tb^1|^2}},
\end{align*}
the integrand of~\eqref{p_xf_total1_under2} is bounded as
\begin{align*}
    &   \frac{|x^1-\eta_{p^2}(\mathbf{x}_{p^2}^2)|^4}{|\tb^1|^5}\frac{|\tb^1|}{|x^1-\eta_{p^2}(\mathbf{x}_{p^2}^2)|^2} e^{-\frac{\big|x^1-\eta_{p^2}(\mathbf{x}_{p^2}^2) \big|^2}{4|\tb^1|^2}} \lesssim \frac{|x^1-\eta_{p^2}(\mathbf{x}_{p^2}^2)|^2}{|\tb^1|^4}e^{-\frac{\big|x^1-\eta_{p^2}(\mathbf{x}_{p^2}^2) \big|^2}{4|\tb^1|^2}} ,
\end{align*}
the integrand of~\eqref{p_xf_total1_under3} is bounded as
\begin{align*}
    &   \frac{|x^1-\eta_{p^2}(\mathbf{x}_{p^2}^2)|^3}{|\tb^1|^5} e^{-\frac{\big|x^1-\eta_{p^2}(\mathbf{x}_{p^2}^2) \big|^2}{4|\tb^1|^2}}.
\end{align*}

With $e^{-\nu(v^1)\tb^1}e^{-\lambda(t^1-\tb^1)} \leq e^{-\nu(v^1)\tb^1/2}e^{-\lambda t^1}$, we derive
\Be\begin{split}\label{est:fBD_x_e1}
&|\eqref{bdr_bdr_2}|\\
&\lesssim e^{-\lambda t^1}\sup_t e^{\lambda t} |wf(t)|_\infty
\iint \int_0^{t^{1}} e^{-\nu(v^1) \tb^{1}/2}e^{-\frac{\big|x^{1} - \eta_{p^{2} } (\mathbf{x}^{2}_{p^{2}})\big|^2}{4|\tb^{1}|^2}}  \\
& \quad \quad \quad \quad \quad \quad \quad \quad \quad \quad \quad  \times \Big[
\frac{|x^{1} - \eta_{p^{2} } (\mathbf{x}^{2}_{p^{2}})|^3}{|\tb^{1}|^5}+ \frac{|x^{1} - \eta_{p^{2} } (\mathbf{x}^{2}_{p^{2}})|^2}{|\tb^{1}|^4}+ \frac{|x^{1} - \eta_{p^{2} } (\mathbf{x}^{2}_{p^{2}})|^4}{|\tb^{1}|^6}\Big]
\\
&\lesssim   e^{-\lambda t^1}\sup_t e^{\lambda t} |wf(t)|_\infty
 \int_0^{\infty}
\frac{e^{-\nu(v^1) \tb^{1}/2}}{|\tb^{1}|^{1/2}}\iint \Big[\frac{1}{|x^{1} - \eta_{p^{2} } (\mathbf{x}^{2}_{p^{2}})|^{3/2}} + \frac{1}{|x^{1} - \eta_{p^{2} } (\mathbf{x}^{2}_{p^{2}})|^{1/2}}\Big]\\
&\lesssim  e^{-\lambda t^1}\sup_t e^{\lambda t} |wf(t)|_\infty ,
\end{split}\Ee
where we have used 
\Be
\begin{split}\notag
&\Big[
\frac{|x^{1} - \eta_{p^{2} } (\mathbf{x}^{2}_{p^{2}})|^3}{|\tb^{1}|^5}+ \frac{|x^{1} - \eta_{p^{2} } (\mathbf{x}^{2}_{p^{2}})|^2}{|\tb^{1}|^4}+\frac{|x^{1} - \eta_{p^{2} } (\mathbf{x}^{2}_{p^{2}})|^4}{|\tb^{1}|^6}+\frac{|x^{1} - \eta_{p^{2} } (\mathbf{x}^{2}_{p^{2}})|^4}{|\tb^{1}|^5}\Big]
e^{-\frac{|x^{1} - \eta_{p^{2} } (\mathbf{x}^{2}_{p^{2}})|^2}{4|\tb^{1}|^2}}\\
&\leq 
\Big[\frac{|x^{1} - \eta_{p^{2} } (\mathbf{x}^{2}_{p^{2}})|^{11/2}}{|\tb^{1}|^{11/2}}+\frac{|x^{1} - \eta_{p^{2} } (\mathbf{x}^{2}_{p^{2}})|^{9/2}}{|\tb^{1}|^{9/2}} + \frac{|x^1-\eta_{p^2}(\mathbf{x}_{p^2}^2)|^{7/2}}{|\tb^1|^{7/2}}\Big]
\\
&\times \Big[\frac{1}{|\tb^{1}|^{1/2}} \frac{1}{|x^{1} - \eta_{p^{2} } (\mathbf{x}^{2}_{p^{2}})|^{3/2}} + \frac{1}{|\tb^{1}|^{1/2}} \frac{1}{|x^{1} - \eta_{p^{2} } (\mathbf{x}^{2}_{p^{2}})|^{1/2}} \Big] e^{-\frac{|x^{1} - \eta_{p^{2} } (\mathbf{x}^{2}_{p^{2}})|^2}{4|\tb^{1}|^2}}\\
&\lesssim \frac{1}{|\tb^{1}|^{1/2}} \frac{1}{|x^{1} - \eta_{p^{2} } (\mathbf{x}^{2}_{p^{2}})|^{3/2}} + \frac{1}{|\tb^{1}|^{1/2}} \frac{1}{|x^{1} - \eta_{p^{2} } (\mathbf{x}^{2}_{p^{2}})|^{1/2}}.
\end{split}
\Ee

Collecting~\eqref{bdd_deri_x_1236} \eqref{bdd_deri_x_K} and \eqref{est:fBD_x_e1} we conclude
\begin{align*}
  \eqref{bdr_deri_x}  & \lesssim e^{-\lambda t^1}\big[\mathcal{B}+o(1)\sup_{s\leq t}e^{\lambda s}\Vert w_{\tilde{\theta}}\alpha\nabla_x f(s)\Vert_\infty \big].
\end{align*}
This, together with~\eqref{bdr_deri_v_bdd}, conclude that
{\color{black}
\begin{align}
   \eqref{nabla_x_4} & \lesssim  \eqref{partial_wall_bdd}+\eqref{fBD_x} \lesssim \frac{e^{-\lambda t}}{w_{\tilde{\theta}}(v)\alpha(x,v)}\times \big[\mathcal{B}+o(1)\sup_{s\leq t}e^{\lambda s}\Vert w_{\tilde{\theta}}\alpha\nabla_x f(s)\Vert_\infty\big]. \label{4_bdd}
\end{align}
}

\textit{Step 3: estimate of~\eqref{nabla_x_5}.}
Last we compute~\eqref{nabla_x_5}. We apply the characteristic~\eqref{nabla_x_1} - \eqref{nabla_x_8} to expand $\nabla_x f(s,x-(t-s)v,u)$ along $u$. 

The contribution of~\eqref{nabla_x_1} - \eqref{nabla_x_4} and~\eqref{nabla_x_6} - \eqref{nabla_x_8} in the expansion of $\nabla_x f(s,x-(t-s)v,u)$ can be computed using the same computation in~\eqref{1_2_3_6_bdd} and~\eqref{4_bdd}, with replacing $x$ by $x-(t-s)v$, $v$ by $u$, $t$ by $s$. This leads to a bound for these seven terms as
\begin{align}
    & \int^t_{\max\{0,t-\tb\}} e^{-\nu(t-s)} \int_{\mathbb{R}^3}   \dd u \mathbf{k}(v,u) \frac{e^{-\lambda s}w^{-1}_{\tilde{\theta}}(u)}{\alpha(x-(t-s)v,u)}\times \big[\mathcal{B}+o(1)\sup_{s\leq t}e^{\lambda s}\Vert w_{\tilde{\theta}}\alpha\nabla_x f(s)\Vert_\infty\big]  \notag\\
    & \lesssim \frac{e^{-\lambda t}w^{-1}_{\tilde{\theta}}(v)}{\alpha(x,v)}\times \big[\mathcal{B}+o(1)\sup_{s\leq t}e^{\lambda s}\Vert w_{\tilde{\theta}}\alpha\nabla_x f(s)\Vert_\infty\big] \label{bdd_K_12346}.
\end{align}
Here we have used $e^{-\nu(t-s)}e^{-\lambda s}\leq e^{-\lambda t} e^{-\nu(t-s)/2}$ and applied Lemma \ref{lemma:nonlocal_to_local} with Lemma \ref{Lemma: k tilde}.

Then we consider the contribution of~\eqref{nabla_x_5}, denoting $y=x-(t-s)v$, such contribution reads
\begin{align}
    &\int_{\max\{0,t-\tb\}}^t \dd s e^{-\nu(t-s)}\int_{\mathbb{R}^3} \dd u \mathbf{k}(v,u)\notag \\
    &\times \int^s_{\max\{0,s-\tb(y,u)\}} \dd s'e^{-\nu(u)(s-s')} \int_{\mathbb{R}^3} \dd u'\mathbf{k}(u,u')\nabla_x f(s',y-(s-s')u,u')  . \label{K_K}
\end{align}
We split the $\dd s'$ integral into $s-s'<\e$ and $s-s'\geq \e$.

{\color{black} When $s-s'<\e$, we apply Lemma \ref{lemma:nonlocal_to_local} and Lemma \ref{Lemma: k tilde} to have
\begin{align}
    &\eqref{K_K}_1:=\Big|\int_{\max\{0,t-\tb\}}^t \dd s e^{-\nu(t-s)}\int_{\mathbb{R}^3} \dd u \mathbf{k}(v,u)\notag \\
    &\times \int^s_{\max\{0,s-\tb(y,u),s-\e\}} \dd s' e^{-\nu(u)(s-s')} \int_{\mathbb{R}^3} \dd u'\mathbf{k}(u,u')\nabla_x f(s',y-(s-s')u,u') \Big|\notag \\
 & \lesssim  \int_{\max\{0,t-\tb\}}^t \dd s e^{-\nu(t-s)}\int_{\mathbb{R}^3} \dd u \mathbf{k}(v,u) \int^s_{\max\{s-\tb(y,u),s-\e\}} \dd s' e^{-\nu(u)(s-s')}  \notag\\
  & \  \times \int_{\mathbb{R}^3} \dd u'\mathbf{k}(u,u') \frac{e^{-\lambda s'}w^{-1}_{\tilde{\theta}}(u')}{\alpha(y-(s-s')u,u')}\sup_{s\leq t} e^{\lambda s}\Vert w_{\tilde{\theta}}\alpha \nabla_x f(s)\Vert_\infty \notag\\
  &\lesssim  \int_{\max\{0,t-\tb\}}^t \dd s e^{-\nu(t-s)}e^{-\lambda s}\int_{\mathbb{R}^3} \dd u \frac{\mathbf{k}(v,u)}{w_{\tilde{\theta}}(u)} \int^s_{\max\{s-\tb(y,u),s-\e\}} \dd s' e^{-\nu(u)(s-s')/2}  \notag\\
  & \  \times \int_{\mathbb{R}^3} \dd u' \frac{\mathbf{k}_{\tilde{\varrho}}(u,u')}{\alpha(y-(s-s')u,u')}\sup_{s\leq t} e^{\lambda s}\Vert w_{\tilde{\theta}}\alpha \nabla_x f(s)\Vert_\infty \notag\\
  &\lesssim o(1)e^{-\lambda t}w^{-1}_{\tilde{\theta}}(v)\times \sup_{s\leq t} e^{\lambda s}\Vert w_{\tilde{\theta}}\alpha \nabla_x f\Vert_\infty\int_{\max\{0,t-\tb\}}^t \dd s e^{-\nu(t-s)/2}\int_{\mathbb{R}^3} \dd u \frac{\mathbf{k}_{\tilde{\varrho}}(v,u)}{\alpha(y,u)} \notag\\
  &\lesssim o(1)\frac{e^{-\lambda t}w^{-1}_{\tilde{\theta}}(v)}{\alpha(x,v)}\sup_{s\leq t} e^{\lambda s}\Vert w_{\tilde{\theta}}\alpha \nabla_x f(s)\Vert_\infty. \label{kk_leq_e}
\end{align}
Here we note that we applied~\eqref{est:nonlocal_with_e} in the fifth line and~\eqref{est:nonlocal_wo_e} in the last line.

When $s-s'\geq \e$, we further split the $u$ integration into $|u|>\delta$ and $|u|<\delta$. For $|u|<\delta$, by a similar computation as \eqref{kk_leq_e}, we have
\begin{align*}
    &\Big|\int_{\max\{0,t-\tb\}}^{t-\e} \dd s e^{-\nu(t-s)}\int_{|u|<\delta} \dd u \mathbf{k}(v,u)\notag \\
    &\times \int^s_{\max\{0,s-\tb(y,u),s-\e\}} \dd s' e^{-\nu(u)(s-s')} \int_{\mathbb{R}^3} \dd u'\mathbf{k}(u,u')\nabla_x f(s',y-(s-s')u,u') \Big|\\
    &\lesssim e^{-\lambda t}w^{-1}_{\tilde{\theta}}(v)\times \sup_{s\leq t} e^{\lambda s}\Vert w_{\tilde{\theta}}\alpha \nabla_x f\Vert_\infty\int_{\max\{0,t-\tb\}}^t \dd s e^{-\nu(t-s)/2}\int_{|u|<\delta} \dd u \frac{\mathbf{k}_{\tilde{\varrho}}(v,u)}{\alpha(y,u)} \\
    &\lesssim o(1)\frac{e^{-\lambda t}w^{-1}_{\tilde{\theta}}(v)}{\alpha(x,v)}\sup_{s\leq t} e^{\lambda s}\Vert w_{\tilde{\theta}}\alpha \nabla_x f(s)\Vert_\infty. \label{kk_leq_delta}
\end{align*}
In the last line we used \eqref{est:nonlocal_delta}.

For $s-s'\geq \e$ and $|u|>\delta$, we apply the following change of variable
\begin{equation}\label{cov:u}
\nabla_x f(s',y-(s-s')u,u') = -\frac{\nabla_{u}[f(s',y-(s-s')u,u')]}{s-s'}.    
\end{equation}
Without loss of generality, we assume $s>\e$, otherwise, we directly bound \eqref{K_K} by \eqref{kk_leq_e}. In such case, we only integrate over the $u$ such that $\tb(y,u)>\e$. Then we denote
\begin{align*}
    U:=\{u\in\mathbb{R}^3: |u|>\delta, \ \tb(y,u)>\e\}.
\end{align*}
Then by Lemma \ref{lemma:convex_property}, we have $|n((\xb(y,u)))\cdot u|\gtrsim \e|\delta|^2$. With \eqref{nabla_tbxb}, the corresponding hypersurface, denoted as $\tilde{U}$, is piecewise smooth.

We use the change of variable \eqref{cov:u} to express such case as
\begin{align*}
    &\eqref{K_K}_2:= \Big| \int_{\max\{0,t-\tb\}}^{t} \dd s e^{-\nu(t-s)}\int_{U} \dd u \mathbf{k}(v,u) \\
    &\times \int^{s-\e}_{\max\{0,s-\tb(y,u)\}} \dd s'e^{-\nu(u)(s-s')} \int_{\mathbb{R}^3} \dd u'\mathbf{k}(u,u') \frac{-\nabla_u f(s',y-(s-s')u,u')}{s-s'} \Big|  .
\end{align*}

Now we apply an integration by part in $\dd u$ to obtain
\begin{equation}\label{k_ibp}
\begin{split}
  & \eqref{K_K}_2=  \Big|\int_{\max\{0,t-\tb\}}^t \dd s e^{-\nu(t-s)}\int_{\mathbb{R}^3} \dd u  \int^{s-\e}_{\max\{0,s-\tb(y,u)\}} \dd s'  \\
  &\times \int_{\mathbb{R}^3} \dd u'\nabla_u[e^{-\nu(u)(s-s')}\mathbf{k}(v,u)\mathbf{k}(u,u')] \frac{f(s',y-(s-s')u,u')}{s-s'} \\
  &+ \int_{\max\{0,t-\tb\}}^t \dd s e^{-\nu(t-s)}\int_{\mathbb{R}^3} \dd u \mathbf{k}(v,u) \\
  &\times\int_{\mathbb{R}^3} \dd u' \mathbf{k}(u,u')\nabla_u \tb(y,u)e^{-\nu(u)\tb(y,u)} \frac{f(s-\tb(y,u),\xb(y,u),u')}{\tb(y,u)} \\
  & + \int_{\max\{0,t-\tb\}}^{t} \dd s e^{-\nu(t-s)}\int_{\tilde{U}} \dd u \mathbf{n}_u \mathbf{k}(v,u) \int^{s-\e}_{\max\{0,s-\tb(y,u)\}} \dd s'  e^{-\nu(u)(s-s')}
    \\
    &\times    \int_{\mathbb{R}^3} \dd u' \mathbf{k}(u,u') \frac{f(s',y-(s-s')u,u')}{s-s'}        \Big|.    
\end{split}
\end{equation}
In the last term, $\mathbf{n}_u = \frac{-\nabla_u \tb(y,u)}{|\nabla_u \tb(y,u)|}$. Then the last term is bounded as
\begin{align}
    & \frac{\e^{-1} \sup_t e^{\lambda t}\Vert wf(t)\Vert_\infty}{w_{\tilde{\theta}}(v)} \int^t_{\max\{0,t-\tb\}} \dd e^{-\nu(t-s)} \int_{\tilde{U}} \dd u \mathbf{k}(v,u)\frac{w_{\tilde{\theta}}(v)}{w_{\tilde{\theta}}(u)}  \notag \\
    &\times \int^{s-\e}_{s-\tb(y,u)} \dd s' e^{-\nu(u)(s-s')}\int_{\mathbb{R}^3} \dd u' \mathbf{k}(u,u') e^{-\lambda s'}\frac{w_{\tilde{\theta}}(u)}{w_{\tilde{\theta}}(u')} \notag\\
    &\lesssim \frac{\e^{-1}e^{-\lambda t} \sup_t e^{\lambda t}\Vert wf(t)\Vert_\infty}{w_{\tilde{\theta}}(v)\alpha(x,v)}.   \label{kk_geq_e_last}
\end{align}
In the last line we used $\alpha(x,v)\lesssim 1$ from \eqref{n geq alpha}, and we used \eqref{k_theta}.}

The first term in \eqref{k_ibp} is bounded as
\begin{align}
    &\sup_t e^{\lambda t}\Vert wf(t)\Vert_\infty \notag\\
    &\times \int_{\max\{0,t-\tb\}}^t \dd s e^{-\nu(t-s)}\int_{\mathbb{R}^3} \dd u  \int^{s-\e}_{\max\{0,s-\tb(y,u)\}} \dd s' \notag\\
    &\times \Big|\int_{\mathbb{R}^3} \dd u'\nabla_u[e^{-\nu(u)(s-s')}\mathbf{k}(v,u)\mathbf{k}(u,u')] \frac{e^{-\lambda s'}w^{-1}(u')}{s-s'} \Big|\notag\\
    &\lesssim \e^{-1}e^{-\lambda t} w^{-1}_{\tilde{\theta}}(v)\sup_s e^{\lambda s}\Vert wf(s)\Vert_\infty \lesssim \frac{e^{-\lambda t}\times \mathcal{B}}{w_{\tilde{\theta}}(v)\alpha(x,v)} . \label{kk_geq_e_1}
\end{align}
In the last inequality, we used~\eqref{n geq alpha}. {\color{black}To get the first inequality in the last line, we have used $\tilde{\theta} \ll \theta$ so that $w^{-1}(u')\leq w^{-1}_{2\tilde{\theta}}(u')$, $|u|^2 w_{\tilde{\theta}}^{-1}(u)\lesssim 1$, then we \eqref{k_theta} and~\eqref{nabla_k_theta} to control the $\mathbf{k}$ terms in the third line as
\begin{align*}
   & |\nabla_u [\mathbf{k}(v,u)\mathbf{k}(u,u')] | w^{-1}(u') \leq |\nabla_u [\mathbf{k}(v,u)\mathbf{k}(u,u')] | w^{-1}_{2\tilde{\theta}}(u')\\
   & \lesssim  |\nabla_u \mathbf{k}(v,u)| w^{-1}_{2\tilde{\theta}}(u) \mathbf{k}(u,u')\frac{w_{2\tilde{\theta}}(u)}{w_{2\tilde{\theta}}(u')} + \mathbf{k}(v,u) w^{-1}_{2\tilde{\theta}}(u) |\nabla_u \mathbf{k}(u,u')|\frac{w_{2\tilde{\theta}}(u)}{w_{2\tilde{\theta}}(u')}  \\
    &\lesssim w^{-1}_{\tilde{\theta}}(v)w^{-1}_{\tilde{\theta}}(u) |\nabla_u \mathbf{k}(v,u)| \frac{w_{\tilde{\theta}}(v)}{w_{\tilde{\theta}}(u)} \mathbf{k}_{\tilde{\varrho}}(u,u') + \mathbf{k}(v,u) w^{-1}_{\tilde{\theta}}(u) [1+|u|^2]w^{-1}_{\tilde{\theta}}(u) \frac{\mathbf{k}_{\tilde{\varrho}}(u,u')}{|u-u'|} \\
    & \lesssim w^{-1}_{\tilde{\theta}}(v)[1+|u|^2] w^{-1}_{\tilde{\theta}}(u) \frac{\mathbf{k}_{\tilde{\varrho}}(v,u)}{|v-u|} \mathbf{k}_{\tilde{\varrho}}(u,u') + w^{-1}_{\tilde{\theta}}(v) \mathbf{k}(v,u) \frac{w_{\tilde{\theta}}(v)}{w_{\tilde{\theta}}(u)} \frac{\mathbf{k}_{\tilde{\varrho}}(u,u')}{|u-u'|} \\
    &\lesssim w^{-1}_{\tilde{\theta}}(v) \frac{\mathbf{k}_{\tilde{\varrho}}(v,u)}{|v-u|} \mathbf{k}_{\tilde{\varrho}}(u,u') + w^{-1}_{\tilde{\theta}}(v)\mathbf{k}_{\tilde{\varrho}}(v,u)  \frac{\mathbf{k}_{\tilde{\varrho}}(u,u')}{|u-u'|} \in L^1_{u,u'}.
\end{align*}}
The contribution of $\nabla_u e^{-\nu(u)(s-s')}$ can be controlled by \eqref{nablav nu} and the extra $\frac{1}{s-s'}$.

For the second term in \eqref{k_ibp}, we apply~\eqref{nabla_tbxb}, $e^{-\nu(u)\tb(y,u)} e^{-\lambda(s-\tb(y,u))} \leq e^{-\lambda s}$ and $e^{-\nu(t-s)}e^{-\lambda s}\leq e^{-\nu(t-s)/2}e^{-\lambda t}$ to obtain a bound as
\begin{align}
    & \sup_t e^{\lambda t}| wf(t)|_\infty \int_{\max\{0,t-\tb\}}^t \dd s e^{-\nu(t-s)}\int_{\mathbb{R}^3} \dd u \frac{\mathbf{k}(v,u)}{\alpha(y,u)} \notag\\
    &\times\int_{\mathbb{R}^3} \dd u' \mathbf{k}(u,u') e^{-\nu(u)\tb(y,u)}e^{-\lambda (s-\tb(y,u))} w^{-1}(u') \notag\\
    &\lesssim  \sup_t e^{\lambda t}| wf(t)|_\infty \int_{\max\{0,t-\tb\}}^t \dd s e^{-\nu(t-s)}\int_{\mathbb{R}^3} \dd u \frac{\mathbf{k}(v,u)}{\alpha(y,u)w_{\tilde{\theta}}(u)} \int_{\mathbb{R}^3} \dd u' \mathbf{k}_{\tilde{\varrho}}(u,u') e^{-\lambda s} \notag\\
    &\lesssim \frac{e^{-\lambda t}w_{\tilde{\theta}}^{-1}(v)\sup_t e^{\lambda t}| wf(t)|_\infty}{\alpha(x,v)} \lesssim \frac{e^{-\lambda t}}{w_{\tilde{\theta}}(v)\alpha(x,v)}\times \mathcal{B}. \label{kk_geq_e_2}
\end{align}
Here we recall $\mathcal{B}$ is defined in~\eqref{upp_bdd_B}.

Collecting~\eqref{kk_leq_e} and \eqref{kk_geq_e_last} -- \eqref{kk_geq_e_2} we conclude
\begin{align}
   \eqref{K_K} & \lesssim  \frac{e^{-\lambda t}}{w_{\tilde{\theta}}(v)\alpha(x,v)} \big[\mathcal{B}+o(1)\sup_{s\leq t}e^{\lambda s}\Vert w_{\tilde{\theta}}\alpha \nabla_x f(s)\Vert_\infty \big]\label{bdd_K_5}.
\end{align}

Combining~\eqref{bdd_K_12346} and~\eqref{bdd_K_5} we conclude that
{\color{black}
\begin{align}
  \eqref{nabla_x_5}  & \lesssim  \frac{e^{-\lambda t}}{w_{\tilde{\theta}}(v)\alpha(x,v)} \big[\mathcal{B}+o(1)\sup_{s\leq t}e^{\lambda s}\Vert w_{\tilde{\theta}}\alpha \nabla_x f(s)\Vert_\infty \big] \label{5_bdd}.
\end{align}

\textit{Step 4: conclusion.}
To conclude the theorem we combine~\eqref{1_2_3_6_bdd}, \eqref{4_bdd} and \eqref{5_bdd} to have
\begin{align*}
  |\nabla_x f(t,x,v)|  & \lesssim \frac{e^{-\lambda t}w^{-1}_{\tilde{\theta}}(v)}{\alpha(x,v)} \times \big[\mathcal{B}+o(1)\sup_{s\leq t}e^{\lambda s}\Vert w_{\tilde{\theta}}\alpha \nabla_x f(s)\Vert_\infty \big] ,
\end{align*}
and thus 
\begin{align*}
    |e^{\lambda t}w_{\tilde{\theta}}(v)\alpha \nabla_x f(t)| &\lesssim \mathcal{B}+o(1)\sup_{s\leq t}e^{\lambda s}\Vert w_{\tilde{\theta}}\alpha \nabla_x f(s)\Vert_\infty .
\end{align*}
Since the inequality holds for all $t>0$, we take the supremum in $s\leq t$ to have
\begin{align*}
  \sup_{t}e^{\lambda t}\Vert w_{\tilde{\theta}}\alpha\nabla_x f(t)\Vert_\infty  & \lesssim \mathcal{B}.
\end{align*}
We recall $\mathcal{B}$ is defined in~\eqref{upp_bdd_B}, which is bounded, in particular, $\sup_t e^{\lambda t}|w\partial_t f(t)|_\infty$ is bounded by $\Vert w\partial_t f_0\Vert_\infty$ in Corollary \ref{corollary:time_derivative}. With fixed $\e>0$, we finish the proof.}

\ \\

\subsection{Proof of~\eqref{estF_n}.}\label{sec:proof_sketch}
From~\eqref{F_steady} and~\eqref{bc_F_steady}, the equation for $f_s$ in~\eqref{linearization_steady} reads
\begin{equation}\label{eqn_f_s}
\begin{split}
    &v\cdot \nabla_x f_s +\nu f_s = K(f_s)+\Gamma(f_s,f_s) , \\
    &  f_s(x,v)|_{n(x)\cdot v<0}=\frac{M_W(x,v)}{\sqrt{\mu(v)}}\int_{n(x)\cdot u>0}f_s(x,u) \sqrt{\mu(u)} \{n(x)\cdot u\}\dd u+r(x,v),
\end{split}
\end{equation}
here the remainder term is
\begin{equation}\label{remainder term}
r(x,v):=\frac{M_W(x,v)-\mu(v)}{\sqrt{\mu(v)}}.
\end{equation}

Denoting $g=K(f_s)+\Gamma(f_s,f_s)$, we apply method of characteristic to~\eqref{eqn_f_s} with fixed $t\gg 1$ to have 
\begin{align}
\nabla_x   f_s(x,v)&=  \mathbf{1}_{t\geq \tb } e^{-\nu(v)\tb(x,v)} \nabla_x   [f_s(\xb(x,v),v) ]
 \label{fx_x_1}
\\
    & -  \mathbf{1}_{t\geq \tb } \nu(v) \nabla_x \tb(x,v)  e^{-\nu(v)\tb(x,v)} f_s(\xb(x,v),v)
 \label{fx_x_2}
    \\
    &
    +\mathbf{1}_{t<\tb }e^{-\nu(v)t} \nabla_x [ f_s(x-tv,v)]
     \label{fx_x_3}
    \\
    & +\int_{\max\{0,t-\tb \}}^t e^{-\nu(v)(t-s)} \nabla_x  g (   x-(t-s) v,v) \dd s
     \label{fx_x_4}\\
    &- \mathbf{1}_{t\geq \tb} \nabla_x \tb e^{-\nu(v)\tb}g(x-\tb v,v) .\label{fx_x_5}
\end{align}
Compared with~\eqref{nabla_x_1} - \eqref{nabla_x_8}, the difference is that there is no temporal derivative, no initial condition, while there is an extra term~\eqref{fx_x_3}, and the boundary condition has an extra term~\eqref{remainder term}. On the boundary, $f_s$ is bounded in~\eqref{infty_bound}. Thus we only need to consider the contribution of~\eqref{fx_x_3} and $r$ in~\eqref{fx_x_1}.

With $t\gg 1$, we bound \eqref{fx_x_3} as
\begin{align*}
  |\eqref{fx_x_3}|  & \lesssim \frac{o(1)\Vert w_{\tilde{\theta}}\alpha\nabla_x f_s\Vert_\infty}{w_{\tilde{\theta}}(v)\alpha(x,v)}.
\end{align*}

For~\eqref{fx_x_1}, by a similar computation as~\eqref{partial_wall}, such extra contribution reads
\begin{align*}
 & \frac{e^{-\nu \tb}|v| \partial_{\mathbf{x}_{p^1,j}^1} M_W(\eta_{p^1}(\mathbf{x}_{p^1}^1),v)}{\sqrt{\mu(v)}\alpha(x,v)} \\
 & \lesssim  \frac{e^{-\nu \tb}|v| }{\sqrt{\mu(v)}\alpha(x,v)} \Vert \eta\Vert_{C^1}\Vert T_W\Vert_{C^1(\p\O)}[1+|v|]M_W(x,v)  \lesssim \frac{\Vert T_W-T_0\Vert_{C^1(\p\O)}}{w_{\tilde{\theta}}(v)\alpha(x,v)}.
\end{align*}
Following the same computation in Section \ref{sec:proof_thm_2} for the rest terms, we conclude~\eqref{estF_n}.

\ \\

\section{Uniform-in-time $W^{1,p}$-estimate for $p<3$}\label{sec:W1p}
We aim to utilize Theorem \ref{thm:dynamic_regularity} to obtain the $\alpha$-weighted $C^1$ estimate~\eqref{weight_C1}, and then deduce a $W^{1,p}$ estimate without weight for $p<3$. Since the weight $\alpha$ in Theorem \ref{thm:dynamic_regularity} behaves as $\alpha(x,v)\thicksim |n(\xb(x,v))\cdot v|$, we address the singularity of $1/|n(\xb(x,v))\cdot v|$ in the $W^{1,p}$ estimate through Lemma \ref{lemma:W1p}.

We will use the change of variable in Lemma \ref{lemma:cov} and some properties of the convex domain in Lemma \ref{lemma:convex_property} to deduce Lemma \ref{lemma:W1p}.

\begin{lemma}\label{lemma:cov}
For any $g$,
\begin{equation}\label{cov}
\begin{split}
  \iint_{\O\times \mathbb{R}^3} g(y,v) \dd y \dd v  & = \int_{\gamma_+}\int_0^{\tb(x,v)}g(x-sv,v)|n(x)\cdot v|\dd s \dd v \dd S_x \\
    & = \int_{\gamma_-} \int_0^{\tb(x,-v)}g(x+sv,v)|n(x)\cdot v|\dd s \dd v \dd S_x.
\end{split}
\end{equation}
\end{lemma}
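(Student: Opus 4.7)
The plan is to prove \eqref{cov} as a standard change-of-variables identity induced by the characteristic flow, with the Jacobian computed from a local surface parametrization. I would begin with the first equality and then derive the second one by symmetry.

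First, fix $v \in \mathbb{R}^3$. The strategy is to foliate $\Omega$ (up to a null set) by the family of characteristics emanating from $\gamma_+^v := \{x \in \partial \O : n(x) \cdot v > 0\}$. Concretely, for every $y \in \O$ with $v$ non-grazing, the backward trajectory $y - sv$ exits $\O$ at a unique boundary point $x = y + \tb(y,-v)v$, and this point satisfies $n(x) \cdot v > 0$ because the trajectory is leaving $\O$ at $x$. By the strict convexity \eqref{convex}, the map
\[
\Psi_v : (x,s) \in \gamma_+^v \times (0, \tb(x,v)) \longmapsto y = x - sv \in \O
\]
is a bijection onto $\O$ modulo a set of Lebesgue measure zero (grazing trajectories).

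Next, I would compute the Jacobian of $\Psi_v$. Using a local $C^1$ parametrization $x = \phi(u_1,u_2)$ of $\p\O$ near the boundary point, one has $y = \phi(u_1,u_2) - sv$, and hence the Jacobian matrix $\frac{\p y}{\p(u_1,u_2,s)}$ has columns $\p_{u_1}\phi$, $\p_{u_2}\phi$, $-v$. Its determinant equals
\[
\det\bigl[\p_{u_1}\phi,\, \p_{u_2}\phi,\, -v\bigr] = -\,v \cdot \bigl(\p_{u_1}\phi \times \p_{u_2}\phi\bigr),
\]
so taking absolute values and recalling that the surface area element is $\dd S_x = |\p_{u_1}\phi \times \p_{u_2}\phi|\,\dd u_1 \dd u_2$ and $n(x) = (\p_{u_1}\phi \times \p_{u_2}\phi)/|\p_{u_1}\phi \times \p_{u_2}\phi|$, I get
\[
\dd y = |n(x) \cdot v|\, \dd S_x\, \dd s.
\]
Substituting into $\int_\O g(y,v)\dd y$ and then integrating over $v \in \mathbb{R}^3$ followed by an application of Fubini yields the first equality of \eqref{cov}.

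For the second equality, I would run exactly the same argument but using the \emph{forward} trajectory from $\gamma_-$: for $y \in \O$, trace $y + sv$ backward in $s$ until hitting a point $x \in \partial\O$ with $n(x) \cdot v < 0$ at $s = \tb(x,-v)$. The map $(x,s) \mapsto x + sv$ has Jacobian $|n(x) \cdot v|$ by the same determinant computation (the sign of $v$ affects only the sign inside the determinant). Alternatively, one can simply substitute $v \mapsto -v$ in the first identity and rename. The main technical point — essentially the only one requiring care — is justifying that the grazing set $\gamma_0$ contributes nothing: this is where convexity \eqref{convex} is used to ensure each trajectory exits in finite time and that the foliation is well-defined off a null set. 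I do not anticipate a serious obstacle here; the result is a standard boundary-layer identity.
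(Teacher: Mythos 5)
Your proposal is correct and follows essentially the same route as the paper: establish the bijection $(x,s) \mapsto x - sv$ from $\gamma_+ \times (0,\tb)$ onto $\O$ up to a null set, compute the Jacobian via a local boundary parametrization and the cross-product formula for the surface normal, and obtain the second identity by the sign flip $v \mapsto -v$. The only cosmetic difference is that the paper verifies injectivity and surjectivity of the map more explicitly (by contradiction and via the forward exit time $\tf$) and carries $v$ along in a $6\times 6$ block-triangular Jacobian, whereas you freeze $v$ first and compute the $3\times 3$ determinant directly — both reduce to the same determinant and the same conclusion.
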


\begin{proof}
First, we prove that the following map is a bijection
\begin{equation}\label{map}
(s,x,v) \in (0,\tb(x,v)) \times \gamma_+ \to (x-sv,v)\in \Omega \times \mathbb{R}^3,
\end{equation}
with Jacobian given by
\begin{equation}\label{Jacobian_xv}
\Big|\det \Big(\frac{\partial (x-sv,v)}{\partial (s,x,v)}\Big)\Big| = |n(x)\cdot v|.
\end{equation}
Clearly the map is well-defined since $x-sv\in \Omega$ for $0<s<\tb(x,v)$.

To show the map is injective we suppose $(x_1-s_1v_1,v_1) = (x_2-s_2v_2,v_2)$, then $v_1=v_2=v$, and thus $x_1-s_1v = x_2-s_2v$, which implies $x_1-x_2 = (s_1-s_2)v$. If $s_1\neq s_2$, then $x_1\neq x_2$, and $x_1,x_2$ are in the same characteristic. $(x_1,v)\in \gamma_+$ and $x_2\in\partial \Omega$ implies $x_2 = x_1 - \tb(x_1,v)v=\xb(x_1,v)$, which further implies $(x_2,v)\in \gamma_-\cup \gamma_0$. This contradicts to the fact that $(x_2,v)\in \gamma_+$. By the contradiction argument, we conclude $s_1=s_2$ and $x_1=x_2$.

To show the map is surjective, we define the 
\begin{equation}\label{tf}
\tf(x,v) := \sup\{s>0:x+sv\in \Omega\}, \ \ \xf(x,v):=x+\tf(x,v)v.
\end{equation}
For any $(y,v)\in \Omega\times \mathbb{R}^3$, we take $x=y+\tf(y,v)v\in \partial \Omega$. Then $x-\tf(y,v)v = y$ and $(x,v)\in \gamma_+$. Since $y\in \Omega$, we then have $\tf(y,v)<\tb(x,v)$. Thus~\eqref{map} is surjective and we conclude that it is also bijective.

Then we compute the Jacobian of the map. For $x\in \Omega$, by~\eqref{O_p} $x = \eta_{p}(\mathbf{x}_p)$, then we evaluate the following determinant as
\begin{align*}
    &\Big| \det\Big(\frac{\partial(\eta_p(\mathbf{x}_p)-sv, v)}{\partial (\mathbf{x}_{p,1},\mathbf{x}_{p,2},v)}\Big)\Big|  \notag\\
    & = \Big| \det\left( \begin{array}{cccc}
v  &  \partial_1 \eta_{p}(\mathbf{x}_p) & \partial_2 \eta_p(\mathbf{x}_p) & -s Id \\
0 & 0& 0& Id
\end{array} \right)\Big|  \\
& = \Big| \det\left( \begin{array}{ccc}
v  &  \partial_1 \eta_{p}(\mathbf{x}_p) & \partial_2 \eta_p(\mathbf{x}_p)
\end{array} \right)\Big|  \\
& = |v\cdot (\p_1 \eta_p(\mathbf{x}_p)\times  \p_2\eta_p(\mathbf{x}_p))| = |v\cdot n(x)||(\p_1 \eta_p(\mathbf{x}_p)\times  \p_2\eta_p(\mathbf{x}_p))|.
\end{align*}
Here we have applied~\eqref{orthogonal}.

Since the surface measure equals $\dd S_x = |\p_1 \eta_p(\mathbf{x}_p) \times \p_2 \eta_p(\mathbf{x}_p)|\dd \mathbf{x}_{p,1}\dd \mathbf{x}_{p,2}$, we conclude~\eqref{Jacobian_xv} and the first line of~\eqref{cov}. The proof of the second line of~\eqref{cov} is similar.

\end{proof}

\begin{lemma}\label{lemma:convex_property}
In a convex and $C^2$ domain~\eqref{convex}, $\tb(x,v)$ is bounded as
\begin{equation}\label{tb_bdd}
  \tb(x,v) \lesssim \frac{|n(\xb(x,v))\cdot v|}{|v|^2}  .
\end{equation}
For $x\in \partial \Omega$, we have
\begin{equation}\label{nb_proportion_nxb}
|n(x)\cdot v| \thicksim |n(\xb(x,v))\cdot v|.
\end{equation}
    
\end{lemma}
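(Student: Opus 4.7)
The plan is to prove both bounds by Taylor-expanding the defining function $\xi$ to second order along the straight-line characteristic $s\mapsto x-sv$, using the strict convexity~\eqref{convex} to pin down the quadratic term. Set $\psi(s):=\xi(x-sv)$, so that $\psi(\tb)=\xi(\xb)=0$ and $\psi''(s)=v\cdot\nabla_x^2\xi(x-sv)\cdot v\gtrsim |v|^2$ by~\eqref{convex}. Both estimates should fall out of this one-variable picture with no serious obstacle.

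For~\eqref{tb_bdd}, first I would expand $\psi$ around $s=\tb$ to obtain
\[
\xi(x)=\psi(0)=-\tb\,\psi'(\tb)+\tfrac{\tb^2}{2}\psi''(s_*),\qquad s_*\in(0,\tb).
\]
Using $\xi(x)\le 0$ (since $x\in\bar\O$) together with the convexity lower bound on $\psi''$, the inequality $\tb\,\psi'(\tb)\ge \tfrac{\tb^2}{2}\psi''(s_*)\gtrsim \tb^2|v|^2$ drops out. Since $\psi'(\tb)=-\nabla\xi(\xb)\cdot v$ and, by the standing assumption $\nabla\xi\neq 0$ near $\p\O$, the factor $|\nabla\xi|$ from the definition~\eqref{normal} is bounded above and below on a neighborhood of $\p\O$, this gives $|n(\xb)\cdot v|\gtrsim \tb|v|^2$, which is~\eqref{tb_bdd}.

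For~\eqref{nb_proportion_nxb}, the same calculation is repeated but now with the extra input $\xi(x)=\psi(0)=0$. Expanding $\psi$ around $s=0$ and using $\psi(\tb)=0$ produces
\[
\tb\,\nabla\xi(x)\cdot v=\tfrac{\tb^2}{2}\,v\cdot\nabla^2\xi(x^*)\cdot v,
\]
whose right-hand side is two-sided equivalent to $\tb^2|v|^2$ by~\eqref{convex} (lower bound) and $\xi\in C^2$ (upper bound). Hence $|n(x)\cdot v|\thicksim \tb|v|^2$. Running the expansion from the proof of~\eqref{tb_bdd} again, but with $\xi(x)=0$ now promoting the one-sided inequality to a two-sided equivalence, also yields $|n(\xb)\cdot v|\thicksim \tb|v|^2$; chaining the two equivalences produces~\eqref{nb_proportion_nxb}. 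The only bookkeeping point is keeping the constants coming from $|\nabla\xi|$ and $\|\nabla^2\xi\|_\infty$ uniform near $\p\O$, which is automatic from $\xi\in C^3$ and compactness of $\bar\O$.
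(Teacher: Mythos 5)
Your proof is correct. For \eqref{tb_bdd} your Taylor expansion around $s=\tb$ with Lagrange remainder is the same argument as the paper's, which writes the remainder in integral form as $\int_0^{\tb}\int_s^{\tb}\{v\cdot\nabla_x^2\xi\cdot v\}\,d\tau\,ds$ after first reducing to $(x,v)\in\gamma_+$; your use of $\xi(x)\le 0$ lets you skip that reduction step, a minor simplification. For \eqref{nb_proportion_nxb}, however, you take a genuinely different and more elementary route. The paper invokes the velocity lemma \eqref{Velocity_lemma} — the near-invariance of $\tilde\alpha$ along characteristics, itself established by a Gronwall argument — together with the identity $\tilde\alpha(y,v)\thicksim|n(y)\cdot v|$ at $y\in\p\O$. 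You instead run the Taylor expansion of $\psi(s)=\xi(x-sv)$ twice, once at $s=0$ and once at $s=\tb$, and use the two-sided bound $|v|^2\lesssim v\cdot\nabla^2\xi\cdot v\lesssim|v|^2$ (convexity for the lower bound, $C^2$-compactness for the upper) to get $|n(x)\cdot v|\thicksim\tb|v|^2\thicksim|n(\xb)\cdot v|$ directly. Both routes are sound; yours is self-contained and avoids the auxiliary weight $\tilde\alpha$ entirely, while the paper's reuses a lemma it already needs for other purposes — a reasonable trade-off in each direction.
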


\begin{proof}
{\color{black}The proof is similar to \cite{G}.} First we prove~\eqref{tb_bdd}. It suffices to prove $\tb(x,v)\lesssim \frac{|n(\xb(x,v))\cdot v|}{|v|^2}$ when $(x,v)\in \gamma_+$. When $x\notin \p \O$, \eqref{tb_bdd} follows from the fact that $\tb(x,v)\leq \tb(\xf(x,v),v)$, where $\xf(x,v)$ is defined in~\eqref{tf}. Then for $(x,v)\in \gamma_+$, we have $\xi(x)=0=\xi(x-\tb(x,v)v)$, then
\begin{align*}
  0  &=\xi(x-\tb(x,v)v)=\xi(x)+\int_0^{\tb(x,v)} [-v\cdot \nabla_x \xi(x-sv)] \dd s  \\
    & = [-v\cdot \nabla_x \xi(x-\tb(x,v)v)]\tb(x,v)+\int_0^{\tb(x,v)}\int_{\tb(x,v)}^s \{v\cdot \nabla_x^2 \xi(x-\tau v)\cdot v\}\dd \tau \dd s.
\end{align*}
This leads to
\[-[v\cdot \nabla_x \xi(\xb(x,v))]\tb(x,v) = \int_0^{\tb(x,v)} \int_s^{\tb(x,v)}  \{v\cdot \nabla_x^2 \xi(x-\tau v)\cdot v\} \dd \tau \dd s  .\]
From the convexity~\eqref{convex}, we have $-[v\cdot \nabla_x \xi(\xb(x,v))]\tb(x,v)\gtrsim (\tb(x,v))^2|v|^2$, thus we conclude~\eqref{tb_bdd}.

Then we prove~\eqref{nb_proportion_nxb}. By the definition of the $\tilde{\alpha}$ in~\eqref{kinetic_distance}, for $x\in \p\O$ and $\xb(x,v)\in \p\O$ such that $\xi(x)=0,\xi(\xb(x,v))=0$, we have
\[\tilde{\alpha}(x,v)\thicksim |n(x)\cdot v|, \ \ \tilde{\alpha}(\xb(x,v),v)\thicksim |n(\xb(x,v))\cdot v|.\]
Then by the velocity lemma~\eqref{Velocity_lemma},
we conclude $|n(\xb(x,v))\cdot v|\thicksim |n(x)\cdot v|$.

\end{proof}

Now we control the singularity of $|n(\xb(x,v))\cdot v|^{-p}$ from the following lemma.
\begin{lemma}\label{lemma:W1p}
In a convex domain~\eqref{convex}, for $p<3$, we have
\begin{equation}\label{W1p_bdd}
\iint_{\Omega\times \mathbb{R}^3} \frac{w^{-p}_{\tilde{\theta}}(v)}{|n(\xb(x,v))\cdot v|^p}\dd x\dd v \lesssim 1.  
\end{equation}
\end{lemma}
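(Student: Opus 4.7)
The plan is to flatten the nonlocal quantity $|n(\xb(x,v))\cdot v|$ into a purely boundary quantity $|n(x)\cdot v|$ by applying the change of variables in Lemma \ref{lemma:cov}, and then extract one additional power of $|n(x)\cdot v|$ from the bound on $\tb$ in Lemma \ref{lemma:convex_property}. This will reduce matters to a one-shot calculation in a tangent-normal coordinate frame where the convergence threshold $p<3$ appears explicitly.

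First I will use the second identity in \eqref{cov}: for $(x,v)\in\gamma_-$ and $0<s<\tb(x,-v)$, the backward trajectory from $y=x+sv$ exits $\Omega$ exactly at $x$, so $\xb(y,v)=x$ and therefore $|n(\xb(y,v))\cdot v|=|n(x)\cdot v|$ is independent of $s$. Applying the change of variables with $g(y,v)=w_{\tilde\theta}^{-p}(v)/|n(\xb(y,v))\cdot v|^p$ collapses the $s$-integral and gives
\begin{equation*}
\iint_{\Omega\times\mathbb{R}^3}\frac{w_{\tilde\theta}^{-p}(v)}{|n(\xb(x,v))\cdot v|^p}\,\dd x\,\dd v
\;=\;\int_{\gamma_-}\frac{w_{\tilde\theta}^{-p}(v)\,\tb(x,-v)}{|n(x)\cdot v|^{p-1}}\,\dd v\,\dd S_x.
\end{equation*}

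Next I will apply Lemma \ref{lemma:convex_property} with $(x,-v)$ in place of $(x,v)$: combining \eqref{tb_bdd} and \eqref{nb_proportion_nxb} (applied with $v\mapsto -v$) yields $\tb(x,-v)\lesssim |n(\xb(x,-v))\cdot v|/|v|^2\thicksim |n(x)\cdot v|/|v|^2$ for $x\in\partial\Omega$. Substituting this cancels one power of $|n(x)\cdot v|$ and reduces the problem to showing
\begin{equation*}
\int_{\partial\Omega}\int_{n(x)\cdot v<0}\frac{w_{\tilde\theta}^{-p}(v)}{|v|^2\,|n(x)\cdot v|^{p-2}}\,\dd v\,\dd S_x\;\lesssim\;1.
\end{equation*}

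Finally I will estimate the $v$-integral uniformly in $x\in\partial\Omega$ by rotating coordinates so $n(x)=e_3$ and passing to spherical coordinates $v=r\omega$ with $\omega_3=-\cos\theta'$, $\theta'\in(0,\pi/2)$. The Jacobian $r^2\sin\theta'$ cancels the $1/|v|^2$, $|n(x)\cdot v|^{p-2}=(r\cos\theta')^{p-2}$, and the Gaussian factor $w_{\tilde\theta}^{-p}(v)=e^{-p\tilde\theta r^2}$ controls the large-$r$ tail; the integral factors as $\big(\int_0^\infty r^{2-p}e^{-p\tilde\theta r^2}\dd r\big)\cdot\big(\int_0^{\pi/2}\sin\theta'\,(\cos\theta')^{2-p}\dd\theta'\big)$. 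Both factors are finite precisely when $p<3$ (the $r$-integral converges at $0$ iff $2-p>-1$, and the $\theta'$-integral evaluates to $1/(3-p)$), which is the sharp threshold in the statement.

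There is no real obstacle here once the two preliminary lemmas are in hand; the only subtle point is identifying that the second form of \eqref{cov} is the right one to use so that $|n(\xb(\cdot,v))\cdot v|$ becomes literally $|n(x)\cdot v|$ (rather than being related only by \eqref{nb_proportion_nxb}), after which the proof is a direct computation pinpointing the exponent $p<3$.
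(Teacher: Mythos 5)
Your proposal is correct and follows essentially the same route as the paper: change of variables via Lemma \ref{lemma:cov} to flatten the nonlocal exit-point factor, then Lemma \ref{lemma:convex_property} (both \eqref{tb_bdd} and \eqref{nb_proportion_nxb}) to extract one power of $|n(x)\cdot v|$, followed by spherical coordinates to identify the threshold $p<3$. The only cosmetic difference is that you parametrize via $\gamma_-$ (so $|n(\xb(y,v))\cdot v|=|n(x)\cdot v|$ becomes exact before invoking \eqref{nb_proportion_nxb} in the $\tb$ bound), whereas the paper parametrizes via $\gamma_+$ and invokes \eqref{nb_proportion_nxb} at the end; both choices use the same two ingredients and lead to the same final boundary integral.
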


\begin{proof}
We directly compute the LHS of~\eqref{W1p_bdd} as
\begin{align*} &\iint_{\Omega\times \mathbb{R}^3} \frac{w^{-p}_{\tilde{\theta}}(v)}{|n(\xb(x,v))\cdot v|^p}\dd x\dd v \\
& \lesssim  \int_{\gamma_+}\int_0^{\tb(x,v)} w^{-p}_{\tilde{\theta}}(v) \frac{|n(x)\cdot v|}{|n(\xb(x-sv,v))\cdot v|^p} \dd s \dd v \dd S_x    \\
    & \lesssim \int_{\gamma_+}w^{-p}_{\tilde{\theta}}(v) \frac{|n(x)\cdot v|\tb(x,v)}{|n(\xb(x,v))\cdot v|^p} \dd v \dd S_x\\
    & \lesssim \int_{\gamma_+} w^{-p}_{\tilde{\theta}}(v) \frac{|n(x)\cdot v| |n(\xb(x,v))\cdot v|}{|v|^2|n(\xb(x,v))\cdot v|^p} \dd s \dd v \dd S_x \\
    & \lesssim \int_{\gamma_+} w^{-p}_{\tilde{\theta}}(v) \frac{1}{|v|^2|n(x)\cdot v|^{p-2}}  \dd v \dd S_x \lesssim 1.
\end{align*}
In the second line, we apply the change of variable~\eqref{cov}. In the fourth line, we apply~\eqref{tb_bdd}. In the last line, we apply~\eqref{nb_proportion_nxb}. In the last inequality, for fixed $x\in \p \O$ and fixed $n(x)$, we used the spherical coordinate 
\[v_{\parallel,1}=r\sin \theta \cos \phi, \ v_{\parallel,2} = r\sin \theta \sin \phi, \ n(x)\cdot v = r\cos \theta,\]
here $v_{\parallel,i}$ is defined to be the projection of $v$ to  $\tau_i$, where $(\tau_1,\tau_2,n(x))$ is a basis of $\mathbb{R}^3$ such that $\tau_1\cdot \tau_2=0$ and $\tau_i\cdot n(x)=0$. Then for $p<3$, we apply the spherical coordinate to have
\begin{align*}
   \int_{\mathbb{R}^3}  \frac{w^{-p}_{\tilde{\theta}}(v)}{|v|^2 |n(x)\cdot v|^{p-2}}\dd v & \lesssim \int_0^{\pi} \frac{1}{|\cos \theta|^{p-2}} \dd \theta \int_0^{\infty} \dd r w_{\tilde{\theta}}^{-p}(r) \frac{r^2}{r^{p} } \lesssim 1.
\end{align*}

\end{proof}

We are ready to prove Theorem \ref{thm:W1p}.
\begin{proof}[\textbf{Proof of Theorem \ref{thm:W1p}}]
Recall the weighted $C^1$ estimate for $f_s$ in~\eqref{estF_n}
\[\Vert w_{\tilde{\theta}}(v)\alpha\nabla_x f_s\Vert_\infty<\infty.\]
Then
\begin{align*}
 \Vert \nabla_x f_s \Vert_p  &  =  \Big(\iint_{\Omega\times \mathbb{R}^3} |\nabla_x f_s|^p \dd x\dd v \Big)^{1/p} \\
  &\lesssim\Vert w_{\tilde{\theta}}\alpha\nabla_x f_s\Vert_\infty \Big[\iint_{\Omega\times \mathbb{R}^3}\Big(\frac{w_{\tilde{\theta}}^{-1}(v)}{\alpha(x,v)} \Big)^{p} \dd x \dd v\Big]^{1/p} \\
  &\lesssim \Vert w_{\tilde{\theta}}\alpha\nabla_x f_s\Vert_\infty \Big( \iint_{\Omega\times \mathbb{R}^3} \frac{w^{-p}_{\tilde{\theta}/2}(v)}{|n(\xb(x,v))\cdot v|^p} \dd x \dd v\Big)^{1/p} \\
  &\lesssim \Vert w_{\tilde{\theta}}\alpha\nabla_x f_s\Vert_\infty<\infty.
\end{align*}
In the last line, we have applied Lemma \ref{lemma:W1p}. In the third line we have used~\eqref{bdr_alpha} and $\tilde{\alpha}(x,v)\lesssim |v|$ to have
\begin{align*}
   \frac{w^{-1}_{\tilde{\theta}}(v)}{\alpha(x,v)} & = \frac{1}{w_{\tilde{\theta}/2}(v)\tilde{\alpha}(x,v)} \frac{\tilde{\alpha}(x,v)}{w_{\tilde{\theta}/2}(v)\alpha(x,v)}  \\
   & \lesssim  \frac{1}{w_{\tilde{\theta}/2}(v)|n(\xb(x,v))\cdot v|} \frac{\langle v\rangle}{w_{\tilde{\theta}/2}(v)} \lesssim \frac{1}{w_{\tilde{\theta}/2}(v)|n(\xb(x,v))\cdot v|} .
\end{align*}
We conclude \eqref{f_s_W1p}.

Similarly, when all conditions in Theorem \ref{thm:dynamic_regularity} are satisfied, we have
\[e^{\lambda t}\Vert w_{\tilde{\theta}}\alpha\nabla_x f(t)\Vert_\infty<\infty,\]
then
\begin{align*}
  \sup_{t} e^{\lambda t}\Vert \nabla_x f(t) \Vert_p  &  =\sup_t  \Big(\iint_{\Omega\times \mathbb{R}^3} |e^{\lambda t}\nabla_x f(t)|^p \dd x\dd v \Big)^{1/p} \\
  &\lesssim \sup_t e^{\lambda t}\Vert w_{\tilde{\theta}}\alpha\nabla_x f(t)\Vert_\infty \Big[\iint_{\Omega\times \mathbb{R}^3}\Big(\frac{w_{\tilde{\theta}}^{-1}(v)}{\alpha(x,v)} \Big)^{p} \dd x \dd v\Big]^{1/p} \\
  &\lesssim \sup_t e^{\lambda t}\Vert w_{\tilde{\theta}}\alpha\nabla_x f(t)\Vert_\infty \Big( \iint_{\Omega\times \mathbb{R}^3} \frac{w^{-p}_{\tilde{\theta}/2}(v)}{|n(\xb(x,v))\cdot v|^p} \dd x \dd v\Big)^{1/p} \\
  &\lesssim \sup_t e^{\lambda t}\Vert w_{\tilde{\theta}}\alpha\nabla_x f(t)\Vert_\infty<\infty,
\end{align*}
we conclude \eqref{f_W1p}. 
\end{proof}

\appendix

\section{Remark on the second order derivative}
Here we illustrate our attempts for the second order derivative in the interior and highlight the obstacle. We investigate the second order derivative to the steady problem~\eqref{eqn_f_s}. Taking one more derivative to the~\eqref{fx_x_1} - \eqref{fx_x_5} with $g=K(f_s)+\Gamma(f_s,f_s)$, we have
\begin{align}
   \partial_{x_i}\partial_{x_j} f_s(x,v) & = \mathbf{1}_{t<\tb} e^{-\nu(v)t}\partial_{x_i}\partial_{x_j}f_s(x-tv,v)\\
   &-\mathbf{1}_{t\geq \tb} \nu(v)\p_{x_i}\tb(x,v) e^{-\nu(v)\tb(x,v)}\partial_{x_j}[f_s(\xb(x,v),v)] \label{fxx_1}\\
    &  +\mathbf{1}_{t\geq \tb} e^{-\nu(v)\tb(x,v)}
    \partial_{x_j}[\partial_{x_i}[f_s(\xb(x,v)),v]]\\
    & -\mathbf{1}_{t\geq \tb}\nu(v) \partial_{x_i}\partial_{x_j}\tb(x,v) e^{-\nu(v)\tb(x,v)} f_s(\xb(x,v),v) \\
    & + \mathbf{1}_{t\geq \tb}(\nu(v))^2 \partial_{x_j}\tb(x,v)\partial_{x_i}\tb(x,v) e^{-\nu(v)\tb(x,v)}f_s(\xb(x,v),v)\\
    & -\mathbf{1}_{t\geq \tb}\nu(v)\partial_{x_j} \tb(x,v)e^{-\nu(v)\tb(x,v)}\partial_{x_i}[f_s(\xb(x,v),v)]\\
    & + \int_{\max\{0,t-\tb\}}^t e^{-\nu(t-s)} K \partial_{x_i}\partial_{x_j} f_s(x-(t-s)v,v) \dd s \label{fxx_K}\\
    &+\int_{\max\{0,t-\tb\}}^t e^{-\nu(t-s)} \partial_{x_i}\partial_{x_j} \Gamma(f_s,f_s)(x-(t-s)v,v) \dd s \\
    & -\mathbf{1}_{t\geq \tb}\partial_{x_i}\tb e^{-\nu(v)\tb} K \partial_{x_j} f_s(x-\tb v,v) \label{fxx_k_px}\\
    & - \mathbf{1}_{t\geq \tb}\p_{x_i}\tb e^{-\nu \tb}\p_{x_j}\Gamma(f_s,f_s)(x-\tb v,v) \\
    & -\mathbf{1}_{t\geq \tb}\partial_{x_i} \partial_{x_j} \tb e^{-\nu(v)\tb} (Kf_s+\Gamma(f_s,f_s))(x-\tb v,v) \label{fxx_tb_xx}\\
    &+\mathbf{1}_{t\geq \tb}\nu(v) \partial_{x_i}\tb \partial_{x_j}\tb e^{-\nu(v) \tb} (Kf_s+\Gamma(f_s,f_s))(x-\tb v,v) \\
    & +\mathbf{1}_{t\geq \tb}\partial_{x_j}\tb e^{-\nu(v)\tb} \big\{\p_{x_i}[Kf_s(\xb(x,v),v)] + \p_{x_i}[\Gamma(f_s,f_s)(\xb(x,v),v)]\big\}. \label{fxx_11}
\end{align}
We consider~\eqref{fxx_k_px}, which reads
\begin{align}
    & \p_{x_i} \tb e^{-\nu \tb} \int_{\mathbb{R}^3} \dd u\mathbf{k}(v,u)\p_{x_j} f_s(\xb(x,v),u) . \label{second_derivative}
\end{align}

On the boundary, we express the normal derivative as 
\[v\cdot \nabla_x f_s = (n\cdot v)\partial_n f_s + \sum_{i=1,2}(\tau_i\cdot v)\partial_{\tau_i}f_s,\]
where $\tau_1,\tau_2,n$ is a basis. Thus using the equation~\eqref{eqn_f_s} the normal derivative exhibits a singularity as 
\[\partial_{n}f_s(x,u) = \frac{-\sum_{i=1,2}(\tau_i\cdot u)\partial_{\tau_i}f_s-Lf_s+\Gamma(f_s,f_s)}{n(x)\cdot u}.\]
Whenever $e_j\cdot n(\xb(x,v))\neq 0$, $\p_{x_j}f(\xb(x,v),u)$ exhibits a singularity, which leads to
\begin{align*}
  \eqref{fxx_k_px} =  \eqref{second_derivative} &  \thicksim \p_{x_i}\tb e^{-\nu \tb} \int_{\mathbb{R}^3} \dd u \mathbf{k}(v,u) \frac{1}{|n(\xb(x,v))\cdot u|}.
\end{align*}
Due to the fact that $\frac{1}{|n(\xb(x,v))\cdot u|}\notin L^1_u$, in the presence of the Boltzmann operator, the argument fails to control any second order derivatives.

\medskip

\noindent \textbf{Statements and Declarations.} The authors have no conflicts of interest to declare that are relevant to the content of this article.\\

\noindent \textbf{Data availability statements.} The authors declare that the data supporting the findings of this study are available within the paper.\\

\noindent \textbf{Acknowledgement.} This project is partly supported by NSF-DMS 1900923 and NSF CAREER
2047681. HC is partly supported by NSFC 8201200676 and GRF grant (2130715) from RGC of Hong Kong. CK is partly supported by Simons fellowship in Mathematics and the Brain Pool fellowship
funded by the Korean Ministry of Science and ICT. HC thanks Professor Zhennan Zhou for the kind hospitality
during his stay at Peking University.

\bibliographystyle{siam}
\bibliography{citation}

\end{document}